\numberwithin{equation}{section}
\newtheorem{lemma}[equation]{Lemma}
\newtheorem*{lemma*}{Lemma}
\newcommand{\ch}[1]{\negthinspace\negthinspace\negthinspace\phantom{a}^\vee\negthinspace #1}
\newcommand{\lgr}[1]{\negthinspace\negthinspace\negthinspace\phantom{a}^L #1}
\newcommand{\egr}[1]{\negthinspace\negthinspace\negthinspace\phantom{a}^E #1}
\newcommand{\LG}{\lgr G}
\newcommand{\Char}{\mathsf{\Pi}}
\newcommand{\LKT}{\mathrm{LKT}}
\renewcommand{\int}{\mathrm{int}}
\newcommand{\Ind}{\mathrm{Ind}}
\newcommand{\sgn}{\mathrm{sgn}}
\newcommand{\R}{\mathbb{R}}
\newcommand{\Q}{\mathbb{Q}}
\newcommand{\Z}{\mathbb{Z}}
\newcommand{\C}{\mathbb{C}}
\newcommand{\WR}{\mathbf{W}_{\mathbb{R}}}
\newcommand{\WRc}{\mathbf{W}_{\mathbb{R}}^{\mathrm{cpt}}}
\newcommand{\Rtp}{\R^\times_+}
\newcommand{\Cent}{\mathrm{Cent}}
\newcommand{\Norm}{\mathrm{Norm}}
\newcommand{\Khatall}{\widehat{K}_{\mathrm{all}}}
\newcommand{\A}{\mathcal{A}}
\newcommand{\tSphic}{\widetilde{\mathbb{S}}_{\phi_c}}
\newcommand{\tSphi}{\widetilde{\mathbb{S}}_{\phi}}
\newcommand{\Sphic}{\mathbb{S}_{\phi_c}}
\newcommand{\Sphi}{\mathbb{S}_{\phi}}
\newcommand{\Res}{\mathrm{Res}}
\newcommand{\WFcpt}{\mathbf{W}_F^{\mathrm{cpt}}}
\newcommand{\I}{\mathcal I}
\newcommand{\E}{\mathcal E}
\newcommand{\g}{\mathfrak g}
\renewcommand{\k}{\mathfrak k}
\newcommand{\SL}{\mathrm{SL}}
\newcommand{\GL}{\mathrm{GL}}
\newcommand{\SU}{\mathrm{SU}}
\newcommand{\diag}{\mathrm{diag}}
\newcommand{\X}{\mathcal X}
\newcommand{\D}{\mathcal D}
\newcommand{\Xt}{\widetilde{\mathcal X}}
\newcommand{\h}{\mathfrak h}
\renewcommand{\a}{\mathfrak a}
\renewcommand{\t}{\mathfrak t}
\newcommand{\w}{\mathfrak w}
\newcommand{\s}{\mathfrak s}
\newcommand{\Out}{\mathrm{Out}}
\newcommand{\Aut}{\mathrm{Aut}}
\renewcommand{\P}{\mathcal P}
\newcommand{\sprin}{\mathcal{N}_{\mathrm{prin}, \mathfrak{s}^*}}
\newcommand\inv{^{-1}}
\newcommand{\wt}{\widetilde}
\newcommand{\chGalg}{\ch{G}^{\mathrm{alg}}}
\newcommand{\alg}{\mathrm{alg}}
\newcommand{\Schtau}{\mathbb S_{\ch\tau}}
\newcommand{\Schtautilde}{\widetilde{\mathbb S}_{\ch\tau}}
\newcommand{\Schtautildeprime}{\widetilde{\mathbb S}_{\ch\tau'}}
\newcommand{\U}{U^0}
\newcommand{\Ut}{U}
\newcommand{\Vtau}{V_{\tau,\alpha}}
\newcommand{\PiSpecial}{\Char_{\mathrm{inv}}(\widetilde{\mathbb{S}}_{\ch{\tau}})}
\newcommand{\PiSpecialprime}{\Char_{\mathrm{inv}}(\widetilde{\mathbb{S}}_{\ch{\tau'}})}
\newcommand{\tildeSnew}{\widetilde{\mathbb{S}}_{\ch{\tau}}^{\mathrm{quo}}}
\newcommand{\Psires}{\Psi_{\mathrm{res}}}
\newcommand{\Psiint}{\Psi_{\mathrm{int}}}
\renewcommand{\O}{\mathcal O}
\title[Lowest $K$-types in the local Langlands correspondence]{Lowest $K$-types\\ in the local Langlands correspondence}
\author{Jeffrey Adams}
\address{IDA/Center for Computing Sciences and University of Maryland}
\email{jda@math.umd.edu}
\author{Alexandre Afgoustidis}
\address{CNRS and Institut Élie Cartan de Lorraine, Nancy \& Metz, France}
\email{alexandre.afgoustidis@math.cnrs.fr}
\begin{document}

\frontmatter

\begin{abstract} 
Consider the irreducible representations of a real reductive group $G(\mathbb{R})$, and their parametrization by the local Langlands correspondence. 
We ask: does the parametrization give easily accessible information on the restriction of representations to a maximal compact subgroup~$K(\mathbb{R})$ of~$G(\mathbb{R})$?
We find a natural connection between the set of lowest $K$-types of a representation and its Langlands parameters. 

For our results, it is crucial to use the refined version of the local Langlands correspondence, involving (coverings of) component groups attached to $L$-homomorphisms. The first part of the paper is a simplified description of this refined parametrization.\end{abstract}


\maketitle

\section{Introduction}

\subsection{}
Let $F$ be a local field and let $G$ be a connected reductive $F$-group.
According to the local Langlands conjecture \cite{Borel, VoganLLC},
 the irreducible admissible representations of $G(F)$ come into finite packets which can be
 parametrized by certain morphisms $\phi\colon \mathbf{L}_{{F}} \to\lgr{G}$,
 where $\lgr{G}$ is the $L$-group of $G$ and $\mathbf{L}_F$ is the local
Langlands group for $F$.

The morphism $\phi$  encodes information about the
 representations in the attached $L$-packet. Some of this information is easy to
read directly from $\phi$. For instance, if $F$ is archimedean, then the common
infinitesimal character of representations in the packet is easy to read off
from~$\phi$.

On the other hand, if $K$ is a maximal compact subgroup of $G(F)$ and $\pi$
is an admissible representation, then information on the restriction $\pi\rvert_K$
is very useful for representation theory.

Can we expect the local Langlands correspondence to give accessible
information on the restriction of representations to maximal compact
subgroups?

\subsection{} \label{sec:intro_padic}
We shall soon specialize to $F=\mathbb{R}$.
But let us first outline an idea common to all cases.

The  Langlands group $\mathbf{L}_{{F}}$ has various incarnations,
but  always contains the Weil group~$\mathbf{W}_F$:
 it is equal to $\mathbf{W}_F$ if $F$ is archimedean,
 and can be taken to be~$\mathbf{W}_F \times \mathrm{SL}(2,\mathbb{C})$ otherwise.

Now,  the Weil group $\mathbf{W}_F$ always has a {unique} maximal compact subgroup~$\WFcpt$.
When $F$ is nonarchimedean, $\WFcpt$ is the inertia group of $F$.
When $F$ is archimedean, the group~$\mathbf{W}_F$
is generated by $\C^\times$ and an element~$j$ of order $1$ or $4$, and $\WFcpt = \langle \mathbb{U}, j\rangle$, where $\mathbb{U} \subset \C^\times$ is the unit circle. 

In the search for information on the restriction of representations to maximal compact subgroups,
 a general idea is that it is useful to look at the restriction of Langlands parameters
 to the canonical compact subgroup $\WFcpt$.

Therefore define a \emph{compact parameter} to be a homomorphism
$\psi\colon \WFcpt \to\,\lgr{G}$ that occurs as the
restriction of an $L$-homomorphism $\phi\colon \mathbf{W}_{\mathbb{F}} \to\,\lgr{G}$.
Define equivalence of compact parameters by $\ch{G}$-conjugation on the range as usual,
where $\ch{G} \subset\, \lgr{G}$ is the complex dual group of $G$.

When $F$ is a $p$-adic field, it is expected that compact parameters have a
connection with Bushnell--Kutzko types \cite{BushnellKutzko}, or more precisely
to `typical' representations in the sense of Henniart~\cite[Appendix]{BreuilMezard}.
Given a compact parameter $\psi$, one can hope to attach to the equivalence class of $\psi$
a finite collection $\mathcal{A}(\psi)$ of representations of maximal compact open subgroups.
(Recall that for $p$-adic~$F$, there may be more one conjugacy class of such maximal compacts.)
A further hope is that the representations in $\mathcal{A}(\psi)$ should be `typical'
for the representations (conjecturally) attached to Langlands parameters which restrict to $\psi$.
This is known for $\mathrm{GL}_n$ and in a few other cases~\cite{BreuilMezard, Paskunas},
but  speculative in general; see for instance~\cite{Latham}.

\subsection{} \label{sec:general_question}
This paper shows what these ideas become for real groups.
From now on, we take $F=\mathbb{R}$,
and study the relationship between the Langlands correspondence
and Vogan's notion of lowest $K$-types \cite{Vogan79, VoganGreenBook}.

Suppose $K(\mathbb{R})$ is a maximal compact subgroup of $G(\mathbb{R})$;
since we are working over the reals it is unique up to $G(\mathbb{R})$-conjugacy.
The set $\mathrm{LKT}(\pi)$  of lowest $K$-types of $\pi$ is a finite collection of irreducible representations of~$K(\mathbb{R})$,
and is an important invariant of representations of $G(\mathbb{R})$.
For instance, if $\pi$ is tempered, then  $\mathrm{LKT}(\pi)$ determines the whole restriction $\pi\rvert_K$.
In fact, for tempered $\pi$, the set~$\mathrm{LKT}(\pi)$ and the infinitesimal character
are almost enough to determine~$\pi$ completely.

What we shall do is determine the way the local Langlands correspondence
encodes the lowest $K$-types of irreducible admissible representations.
This is a stronger question than those in \S \ref{sec:intro_padic};
but the answer and the strategy fits well with the ideas there.

\subsection{} \label{sec:the_question}
For real groups, the local Langlands correspondence is best formulated
by grouping together all real forms of $G$ in a given inner class.
Thus, for the rest of this paper, let $G$ be a connected complex reductive group,
endowed with an inner class of real forms.
Let $\lgr{G}$ be an $L$-group for $G$ and the given inner class (see \S \ref{sec:structure_LG}).
When $\phi\colon\mathbf{W}_{\mathbb{R}} \to\lgr{G}$ is an $L$-homomorphism,
let~$\Pi(\phi)$ denote the attached `large' $L$-packet,
which consists of representations of the various real forms of $G$ in the given inner class.
It is in fact crucial to extend the formalism to include the notion of \emph{strong real form}
and representation of a strong real form of $G$. See  \cite{ABV, Algorithms} and \S \ref{sec:structure_theory}.

The individual representations in the $L$-packet $\Pi(\phi)$
can be parametrized by the characters of a certain  abelian group~$\widetilde{\mathbb{S}}_{\phi}$.
The group $\widetilde{\mathbb{S}}_{\phi}$ is defined from $\phi$ geometrically:
beginning with the centralizer $\mathrm{Cent}_{\ch{G}}(\phi(\mathbf{W}_{\mathbb{R}}))$,
we can consider its component group~$\mathbb{S}_\phi$,
and~$\widetilde{\mathbb{S}}_\phi$ is a canonical covering of~$\mathbb{S}_\phi$.
See \S \ref{sec:structure_LG}. The character group $\Char(\tSphi)$ is crucial to the theory.

Suppose we begin with $\phi\colon\mathbf{W}_{\mathbb{R}} \to\lgr{G}$ as above.
Given a character $\chi\in \Char(\tSphi)$,
the Langlands correspondence attaches to $(\phi, \chi)$
a representation $\pi=\pi(\phi, \chi)$ of a (strong) real form of $G$.
Fixing a maximal compact subgroup of the given real form, we get a finite set $\mathrm{LKT}(\phi, \chi)=\mathrm{LKT}(\pi)$.

A more precise version of the question in \S \ref{sec:general_question} is:
can we easily find $\mathrm{LKT}(\phi, \chi)$ from~$(\phi,\chi)$?
In other words, does the local Langlands correspondence encode lowest $K$-types
in a relatively accessible manner?
That is what we answer in this paper.

Of course we {can} always find $\mathrm{LKT}(\phi, \chi)$ the hard way, by
(a) working out what precisely $\pi=\pi(\phi, \chi)$ is, going through the details of the whole Langlands correspondence;
and then (b) finding $\mathrm{LKT}(\pi)$ from $\pi$, extracting the answer from the deep results of~\cite{VoganGreenBook}.
Steps (a) and (b) can both be made explicit, but each is difficult.
What we are looking for is an easier way.

\subsection{} \label{sec:intro_real_tempered}
Representations which are \emph{tempered, irreducible,} and have \emph{real infinitesimal character} play a central
role in the theory of $K$-types.
It seems useful to introduce a name for these representations,
and we shall call them \emph{tempiric}.
See \S \ref{sec:ric_temp}.

It is easy to understand the corresponding Langlands parameters $\phi$: the~$L$-packet of $\phi$ contains a tempiric representation if and only if $\phi|_{\Rtp}=1$, in which case the $L$-packet consists entirely of tempiric representations. Therefore 
we say  a Langlands parameter $\phi$ is \emph{tempiric} if $\phi|_{\Rtp}=1$. 

A key aspect of our formulation is that the Weil group $\WR$ splits as a  direct product
$\WR\simeq \WRc\times \Rtp$. Therefore the restriction map $\phi\mapsto \phi|_{\WRc}$, taking a parameter 
$\phi\colon\WR\rightarrow \lgr G$ to the compact parameter $\phi_{|\WRc}$, has an
inverse, whose image is precisely the tempiric parameters.
Furthermore restriction to $\WRc$ defines a bijection, respecting conjugation by $\ch G$,
between tempiric Langlands parameters and compact parameters.
Consequently it  is convenient to replace compact parameters with tempiric Langlands parameters.
We change notation accordingly: 
if~$\phi$ is a Langlands parameter,  let $\phi_c$ be the unique tempiric parameter such that $\phi,\phi_c$ have the same restriction to $\WRc$.
Explicitly:
\begin{equation}
  \label{e:phi_c}
\phi_c(z)=\phi(z/|z|)\quad (z\in\C^\times\subset\WR).
\end{equation}
With this convention:
$$
\phi\text{ is tempiric }\iff \phi=\phi_c.
$$

The class of tempiric representations has remarkable properties
regarding lowest $K$-types. 
If~$\pi$ is tempiric, then
$\LKT(\pi)$ is a singleton. Furthermore, given a real group~$G(\R)$, the resulting map 
$$
\{\,\text{tempiric representations of $G(\R)$}\,\} \to \widehat{K}: \quad \pi\mapsto \LKT(\pi)
$$ 
is a \emph{bijection}. 
This is key to the way representations of $K$ are implemented in the \texttt{atlas} software.
See \cite{KHatHowe}, where this idea is the main ingredient.

\subsection{} \label{sec:prog_prelim}
Roughly speaking this suggests the following approach.
For now we fix a real form of~$G$.
Suppose $\phi$ is a Langlands parameter, with corresponding $L$-packet $\Pi(\phi)$. 
Define

\begin{subequations}
  \renewcommand{\theequation}{\theparentequation)(\alph{equation}}
  \label{e:pi_K}
\begin{equation}
  \Pi_{\LKT}(\phi)=\bigcup\limits_{\pi\in \Pi(\phi)}\LKT(\pi).
\end{equation}
This set does not depend on $\phi|_{\Rtp}$, so it is natural to let $\phi_c$ be the corresponding
tempiric parameter, and consider
\begin{equation}
\Pi_{\LKT}(\phi_c)=\bigcup\limits_{\pi\in \Pi(\phi_c)}\LKT(\pi).
\end{equation}
\end{subequations}
Since $\phi_c$ is tempiric, each $\pi\in \Pi(\phi_c)$ has a unique lowest $K$-type; so $\Pi_{\LKT}(\phi_c)$ is in canonical bijection with~$\Pi(\phi_c)$.

One aspect underlying our main theorem is that \eqref{e:pi_K}(a) and (b) are equal. Therefore computing lowest $K$-types for this $L$-packet
amounts to understanding how the $K$-types of $\Pi_{\LKT}(\phi_c)$ are distributed among the various $\pi\in \Pi(\phi)$.
Answering this question involves understanding the additional data needed to specify the elements of an $L$-packet.
It also involves working with all real forms in a given inner class at once.

\subsection{} \label{sec:various_bijections} Let us now elaborate on the program announced in \S \ref{sec:prog_prelim}.
This requires the language of strong involutions and strong real forms, as in \cite{Algorithms}.
See \S \ref{sec:structure_theory} for a review of the notions used in the upcoming discussion.
It is useful to keep in mind the special case when $G$ is adjoint, in which case the notions of real form and strong real form agree.

We work within  a fixed inner class for $G$.
Suppose $\{\xi_i\}_{i \in I}$ is a set of representatives of the strong real forms in the given inner class.
Then $\theta_i=\int(\xi_i)$  (conjugation by $\xi_i$) is a Cartan involution for $G$,
defining a real form $G_i(\mathbb{R})$ of $G$. Furthermore
$G_i(\R)^{\theta_i}$ is a maximal compact subgroup of $G_i(\R)$, with complexification 
$K_{\xi_i}=G^{\theta_i}=\Cent_G(\xi_i)$.
The discussion in \S \ref{sec:intro_real_tempered} determines canonical bijections
between the following three classes of objects:

\begin{enumerate}[(1)]
\item The $\ch G$-conjugacy classes of pairs $(\phi_c, \chi_c)$ where $\phi_c$ is tempiric and $\chi_c$ is a character of~$\widetilde{\mathbb{S}}_{\phi_c}$;
\item The union, over $i$, of the tempiric representations of $G_i(\R)$;
\item The union, over $i$, of the irreducible representations of $K_{\xi_i}$.
\end{enumerate}
The map from (1) to (2) is the restriction of  the local Langlands correspondence. See \S\ref{sec:the_question}.
The map from (2) to (3) takes a tempiric representation to its unique lowest $K$-type, as in~\S\ref{sec:intro_real_tempered}.

Define $\Khatall=\coprod_{i \in I}\widehat K_{\xi_i}$.
We write $\mu$ for the bijection (1) $\mapsto$ (3) above: associated to a pair  $(\phi_c,\chi_c)$ is a tempiric representation $\pi(\phi_c,\chi_c)$ 
of one the real forms $G_i(\R)$; we let $\mu(\phi_c,\chi_c)$ be its lowest $K$-type (a representation of $K_{\xi_i}$).
For fixed $\phi_c$ we denote by $\A(\phi_c)$ the union of the sets of lowest $K$-types for the representations in $\Pi(\phi_c)$. This is a subset of $\Khatall$, and we get a bijection
\begin{equation}
\label{vogan_bijection}
\mu(\phi_c,*)\colon\Char(\tSphic)\overset{\sim}\longrightarrow \A(\phi_c),   \quad \chi_c\mapsto \mu(\phi_c,\chi_c).
\end{equation}

\subsection{} \label{sec:program_rgroups}
We come to the statement of our main results
on the interplay between the Langlands correspondence and lowest $K$-types.

Suppose $\phi\colon\WR\to\lgr{G}$ is a Langlands parameter, and $\chi$ is a character of 
$\widetilde{\mathbb{S}}_\phi$. This defines a representation $\pi=\pi(\phi,\chi)$ of
one of our strong real forms. 
We want to find the set $\mathrm{LKT}(\phi, \chi)$ of lowest $K$-types  of $\pi(\phi,\chi)$.
According to the discussion in Section \ref{sec:prog_prelim}, this is a subset of~$\A(\phi_c)$.
We use the bijection \eqref{vogan_bijection},  and specify a subset of the characters of $\tSphic$. 

There is an obvious inclusion $\phi_c(\mathbf{W}_\mathbb{R}) \subset  \phi(\mathbf{W}_\mathbb{R})$,
and therefore
$\mathrm{Cent}_{\ch{G}}(\phi(\mathbf{W}_{\mathbb{R}})) \subset \mathrm{Cent}_{\ch{G}}(\phi_c(\mathbf{W}_{\mathbb{R}}))$.
This induces a group homomorphism $\iota\colon \mathbb{S}_\phi \to \mathbb{S}_{\phi_c}$.
The same argument applied to coverings gives a homomorphism of abelian groups:
\begin{equation} \label{inclusion_morphism}
\widetilde{\mathbb{S}}_\phi \to \widetilde{\mathbb{S}}_{\phi_c}.
\end{equation}
A crucial point is:
\begin{prop}\label{prop:injectivity}
The morphism \eqref{inclusion_morphism} is injective.
\end{prop}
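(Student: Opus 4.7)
The plan is to reduce the injectivity of the morphism \eqref{inclusion_morphism} to the injectivity of the induced map $\iota\colon\mathbb{S}_\phi\to\mathbb{S}_{\phi_c}$ on ordinary component groups, and then to reduce \emph{that} to the classical fact that the centralizer of a subtorus in a connected reductive complex algebraic group is connected.

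Put $H := \mathrm{Cent}_{\ch G}(\phi(\WRc))$. Since $\phi$ and $\phi_c$ agree on $\WRc$ while $\phi_c$ is trivial on $\Rtp$, we have $H = \mathrm{Cent}_{\ch G}(\phi_c(\WR))$. Because $\WR = \WRc \times \Rtp$ is a direct product, $\phi(\WRc)$ and $\phi(\Rtp)$ commute in $\lgr G$, so
\[
\mathrm{Cent}_{\ch G}(\phi(\WR)) \,=\, \mathrm{Cent}_H(\phi(\Rtp)).
\]
Since $\mathrm{Cent}_H(\phi(\Rtp)) \cap H^0 = \mathrm{Cent}_{H^0}(\phi(\Rtp))$, and this open subgroup has the same identity component as $\mathrm{Cent}_H(\phi(\Rtp))$, the kernel of $\iota$ is $\pi_0(\mathrm{Cent}_{H^0}(\phi(\Rtp)))$. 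Injectivity of $\iota$ therefore reduces to the connectivity of $\mathrm{Cent}_{H^0}(\phi(\Rtp))$.

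For this, recall that $\phi$ is admissible, so $\phi(\C^\times)$ consists of semisimple elements of $\ch G$ and lies in a complex torus. The connected abelian subgroup $\phi(\Rtp) \subseteq \phi(\C^\times)$ therefore has Zariski closure a complex algebraic subtorus $T \subseteq H^0$, and $\mathrm{Cent}_{H^0}(\phi(\Rtp)) = \mathrm{Cent}_{H^0}(T)$. Moreover $H^0$ is a connected reductive complex algebraic group: $\phi(\WRc)$ is a compact subgroup of $\ch G$, hence has reductive Zariski closure, and the centralizer in a connected reductive group of a reductive subgroup has reductive identity component. The classical theorem that the centralizer of a subtorus in a connected reductive group is connected now gives the required connectivity, and hence the injectivity of $\iota$.

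Finally, the canonical covering $\widetilde{\mathbb{S}}_\phi \to \mathbb{S}_\phi$ is constructed functorially from the centralizer data (as spelled out in the sections that follow), so one has a commutative square of central extensions of $\mathbb{S}_\phi$ and $\mathbb{S}_{\phi_c}$ by the \emph{same} finite abelian kernel. A snake-lemma diagram chase then lifts the injectivity of $\iota$ to the asserted injectivity of $\widetilde{\mathbb{S}}_\phi \to \widetilde{\mathbb{S}}_{\phi_c}$. The only genuinely geometric ingredient is the connectivity of $\mathrm{Cent}_{H^0}(\phi(\Rtp))$; everything else is a formal consequence of the direct-product decomposition $\WR = \WRc \times \Rtp$ and standard structure theory for complex reductive groups.
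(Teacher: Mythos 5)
Your reduction to the ordinary component groups is correct, and it is a genuinely different route from the paper's: there, Proposition \ref{prop:injectivity} is proved together with Theorem \ref{main_theorem_rgroups} by factoring $\widetilde{\mathbb{S}}_\phi \to \widetilde{\mathbb{S}}_{\phi_c}$ through a chain of one-root-at-a-time deformations (Cayley transforms and complex cross actions), with injectivity at each step established by identifying the kernels of the surjections $\widetilde{\mathbb{S}}_{\ch{\tau}} \twoheadrightarrow \widetilde{\mathbb{S}}_\phi$ in terms of the elements $\ch{\overline{m}}_\beta$ attached to real singular roots. Your direct argument --- $\ch{G}_\phi$ is the centralizer in $\ch{G}_{\phi_c}$ of the subtorus $T=\overline{\phi(\Rtp)}$, and centralizers of tori in connected algebraic groups are connected, so no nonidentity component of $\ch{G}_\phi$ can land in $(\ch{G}_{\phi_c})^0$ --- correctly proves that $\mathbb{S}_\phi \to \mathbb{S}_{\phi_c}$ is injective, in one step and with much less machinery.

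The gap is in your last paragraph. The kernels of $\widetilde{\mathbb{S}}_\phi \to \mathbb{S}_\phi$ and $\widetilde{\mathbb{S}}_{\phi_c} \to \mathbb{S}_{\phi_c}$ are not ``the same finite abelian kernel'' by functoriality (nor need they be finite: the paper notes $\widetilde{\mathbb{S}}_\phi$ may be infinite). Concretely, the kernel of $\widetilde{\mathbb{S}}_\phi \to \mathbb{S}_\phi$ is $\pi_1(\ch{G})^{\mathrm{alg}}$ modulo the image of $\pi_1\bigl((\ch{G}_\phi)^0\bigr)$, and similarly for $\phi_c$ with $(\ch{G}_{\phi_c})^0$; since $(\ch{G}_\phi)^0 \subseteq (\ch{G}_{\phi_c})^0$, the induced map between these kernels is a priori only a surjection --- the wrong direction for your four-lemma argument. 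If that map had a kernel, $\widetilde{\mathbb{S}}_\phi \to \widetilde{\mathbb{S}}_{\phi_c}$ could fail to be injective even though $\mathbb{S}_\phi \to \mathbb{S}_{\phi_c}$ is injective, so you must actually prove that the two images of fundamental groups in $\pi_1(\ch{G})^{\mathrm{alg}}$ coincide. This does follow from your setup: $(\ch{G}_\phi)^0=\mathrm{Cent}_{(\ch{G}_{\phi_c})^0}(T)$ contains a maximal torus $S$ of $(\ch{G}_{\phi_c})^0$, and for a connected \emph{reductive} group the map $\pi_1(S)\to\pi_1\bigl((\ch{G}_{\phi_c})^0\bigr)$ is already surjective, which forces the two images to agree. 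But this is a second geometric input (using reductivity of $(\ch{G}_{\phi_c})^0$, which Borel's connectedness theorem did not require), not a formal consequence of the construction; with it supplied, your proof is complete.
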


Therefore the dual of  \eqref{inclusion_morphism} provides a
canonical surjection
\begin{equation} \label{restriction_morphism}
\mathrm{Res}\colon \Char(\widetilde{\mathbb{S}}_{\phi_c}) \twoheadrightarrow\Char(\widetilde{\mathbb{S}}_{\phi})
\end{equation}
of character groups.
This leads to the main theorem.

\begin{theo} \label{main_theorem_rgroups}
Suppose $(\phi,\chi)$ is a pair consisting of a Langlands parameter $\phi:\WR\to\lgr G$ and a character $\chi$ of
$\tSphi$. Let $\pi(\phi,\chi)$ be the corresponding representation of one of the  strong real forms of $G$. 
Let $\Omega = \mathrm{Res}^{-1}(\chi)\subset \Char(\tSphic)$ be the fiber of the restriction map \eqref{restriction_morphism}. 

Then $\mathrm{LKT}(\pi(\phi, \chi))$ is the set which corresponds to $\Omega$
under the canonical bijection~\eqref{vogan_bijection}, i.e.
\begin{equation}
  \LKT(\pi(\phi,\chi))=\{ \, \mu(\phi_c,\chi_c)\mid \chi_c\in\Char(\tSphic), \ \Res(\chi_c)=\chi\, \}.
\end{equation}
\end{theo}

\subsection{}
Proposition \ref{prop:injectivity} is motivated by work of Knapp--Stein and Shelstad.
If $G(\mathbb{R})$ is a real form of~$G$,
consider a parabolic subgroup $P(\mathbb{R}) = M(\mathbb{R})N(\mathbb{R})$
with Levi factor $M(\mathbb{R})$,
consider the Langlands decomposition $M(\mathbb{R})= M_0 \,{}A(\mathbb{R})$,
and  fix a square-integrable representation~$\sigma$ of~$M_0$.
Knapp and Stein study the reducibility of the representations
\mbox{$\pi_{\sigma, \nu} = \mathrm{Ind}_{P(\mathbb{R})}^{G(\mathbb{R})}(\sigma \otimes e^{i\nu})$},
when $\nu$ is a linear form on the Lie algebra~$\mathfrak{a}$ of~$A(\R)$.
If the irreducible constituents of $\pi_{\sigma, \nu}$ are in an $L$-packet $\Pi(\phi)$,
then those of $\pi_{\sigma, 0}$ are in the $L$-packet~$\Pi(\phi_c)$.
The reducibility of $\pi_{\sigma, \nu}$ is governed by the Knapp--Stein group~$R_{\sigma, \nu}$.
Proposition~\ref{prop:injectivity} is an $L$-group analogue of Knapp and Stein's observation
that $R_{\sigma, \nu}$ always embeds naturally in $R_{\sigma, 0}$.
Now, Langlands and Shelstad showed how to realize the $R$-group $R_{\sigma, \nu}$
as a quotient $\mathbb{S}_{\phi}/\mathbb{S}^1_{\phi}$,
where $\mathbb{S}^1_{\phi}$ is the component group
for a discrete series parameter of a Levi subgroup of $G$
(see \cite{Langlands_notes_KZ}, \cite{Shelstad}).
To understand the injectivity of $\iota\colon \mathbb{S}_{\phi} \to \mathbb{S}_{\phi_c}$
proved in Proposition~\ref{prop:injectivity}, it may be helpful to say
that the groups $\mathbb{S}^1_{\phi}$ and $\mathbb{S}^1_{\phi_c}$
can be seen to be identical,
and that the morphisms just discussed fit into a commutative diagram
\[
\begin{CD}
0 @>>> \mathbb{S}_{\phi}^1 @>>> \mathbb{S}_\phi @>>> R_{\sigma, \nu}
@>>> 0 \\
@. @| @V{\iota}VV @V{}VV \\
0 @>>> \mathbb{S}^1_{\phi_{\mathrm{c}}} @>>>
\mathbb{S}_{\phi_{\mathrm{c}}}
@>>> R_{\sigma, 0} @>>> 0.
\end{CD}
\]
It is actually possible to combine arguments of Knapp--Stein and Shelstad
to give a proof that~$\iota$ is injective.
But we shall follow a different path and avoid the use of harmonic analysis and intertwining operators,
in favor of more elementary structure theory on the dual side.
This will also make it possible to incorporate coverings. See Section~\ref{sec:waldspurger}.

Our proof of Theorem~\ref{main_theorem_rgroups} is based on ideas on structure theory and lowest $K$-types
implemented in the \texttt{atlas} software,
and an algorithm for the determination of lowest $K$-types due to David Vogan. See Section~\ref{sec:r_groups}. This uses  \emph{Cayley transforms} and \emph{cross actions} in the \texttt{KGB} space, which is at the heart of the \texttt{atlas} parametrization of representations.

\subsection{}\label{sec:leftover_beginning}
Theorem \ref{main_theorem_rgroups} determines the lowest $K$-types
attached to any pair $(\phi, \chi)$, in terms of the bijection (1)--(3) in \S \ref{sec:various_bijections}.
However, if the parameter $\phi$ we began with is itself trivial on $\mathbb{R}^\times_+$,
then Theorem~\ref{main_theorem_rgroups} is tautological.
Therefore it would be good to know whether the bijection (1)--(3) 
is easy to understand in terms of the Langlands correspondence.

One problem is that the group~$K$ is in general disconnected,  albeit not very badly; therefore it does not have an $L$-group, at least not in the sense of \cite{Borel}. One could turn instead to recent work of Kaletha~\cite{Kaletha}, which seeks to extend the Langlands parametrization to mildly disconnected groups such as~$K$. But even in the simpler case where~$K$ is connected, and has an $L$-group, it does not seem that there is an easy and general description of the bijection of~\S \ref{sec:various_bijections} in terms of the $L$-group of~$K$.

\subsection{}
Our results are of course entirely dependent on the fine details of the local Langlands parametrization.
The parametrization of $L$-packets by $L$-homomorphisms is widely known:
the classical description is \cite{Borel}, see also \cite{Contragredient}.

For the refined version, which includes a parametrization of each $L$-packet by characters of the component group,
the situation is less satisfying.
There are (at least) two versions: Shelstad's classical work \cite{Shelstad, Shelstad08},
and the version of Adams--Barbasch--Vogan \cite{ABV}.
It is not obvious how to match these two parametrizations
(see however the recent paper \cite{Arancibia_Mezo} for a comparison).
Therefore we need a choice; for reasons which should become obvious later, we use~\cite{ABV}.

We have taken this opportunity to give a slightly simplified
exposition of the refined Langlands correspondence (in the Adams--Barbasch--Vogan version).
The simplifications come from ideas crucial to the \texttt{atlas} software package~\cite{Algorithms}:
we will use notions of \cite{Algorithms} to express some ideas from \cite{ABV, HermitianFormsSMF} in a hopefully easier way.
The simplified exposition may be of independent interest.
It is the contents of \S \ref{sec:dictionary_KGB_charcompgroup} and \S \ref{sec:local_langlands}.
We treat the general case here; if one specializes to the discrete series case, then several ingredients of our exposition are in
\cite{DiscreteSeriesSigns} and \cite{Contragredient}.

\subsection{}
The paper divides naturally into two parts.
The first part, in \S \ref{sec:structure_theory}--\ref{sec:local_langlands},
leads up to our description of the local Langlands correspondence.
Section \ref{sec:structure_theory} introduces the vocabulary (strong real forms, \texttt{KGB} elements) that is needed on the $G$-side,
and  Section \ref{sec:structure_LG} collects basic structure theory on the $\lgr{G}$-side.
The exposition of the refined Langlands parametrization is in \S \ref{sec:dictionary_KGB_charcompgroup} and \S \ref{sec:local_langlands}.

The second part of the paper turns to lowest $K$-types.
Section \ref{sec:ric_temp} fills in the details concerning the tempiric representations discussed above, and
their Langlands parameters.
Sections \ref{sec:waldspurger} and \ref{sec:r_groups} are concerned with the forgetful map $\phi \mapsto \phi_c$; there 
we prove our main results, Proposition~\ref{prop:injectivity} and Theorem~\ref{main_theorem_rgroups}. 

The \hyperref[sec:appendix]{Appendix} discusses the relationship between the refined Langlands correspondence  of Section~\ref{sec:local_langlands} and Whittaker data. 

\subsection*{Acknowledgements}
We thank Tasho Kaletha, David Renard and David Vogan for helpful discussions, and Jean-Loup Waldspurger for suggesting a simple proof of Lemma~\ref{lemm:walds}. 

This research was started within the online research community on Representation theory and Noncommutative geometry
sponsored by the American Institute of Mathematics;
we are grateful to Pierre Clare, Nigel Higson and Birgit Speh for putting it together during the early stages of the COVID-19 pandemic.


\section{Structure theory for $G$: strong real forms and \texttt{KGB} elements}
\label{sec:structure_theory}

This section is a review of material from~\cite{Algorithms}.
Throughout the paper, we fix
\begin{itemize}
\item[$\bullet$] a connected complex reductive group $G$,
\item[$\bullet$] a pinning $\mathcal{P}=(B, H, \{X_\alpha\})$ of $G$,
\item[$\bullet$] and an inner class of real forms of $G$.
\end{itemize}

The pinning $\mathcal{P}$ consists of a Borel subgroup $B$, a Cartan subgroup $H \subset B$,
and a set  $\{X_\alpha\}$ of root vectors for the simple roots of $H$ in $B$.
For the notion of inner class of real forms, see \S \ref{sec:notation_real_forms}.

\subsection{Ordinary real forms}
\label{sec:notation_real_forms}
\label{sec:def_realform} 
\label{extended_group}
 \label{sec:comments_realform}

A real form of $G$ is the fixed points $G^\sigma$
of an \emph{antiholomorphic} involutive automorphism $\sigma$ of $G$.
It is well known that it is equivalent to work instead in terms of the Cartan involution,
which is a \emph{holomorphic} involution.

Therefore an \emph{involution} of~$G$ will mean, in this paper, a
holomorphic automorphism $\theta$ of~$G$ satisfying $\theta^2 = 1$.
We define a \emph{real form} of~$G$ to be an involution of~$G$, and
say two real forms are \emph{equivalent} if they are $G$-conjugate.
Given a real form $\theta$, there is an antiholomorphic involution $\sigma$, commuting with $\theta$, 
such that $G^\theta$ is the complexification of a maximal compact subgroup of $G(\R)=G^\sigma$. This induces a bijection between equivalence classes of holomorphic and anti-holomorphic involutions.
Given~$\theta$ and a choice of~$\sigma$, we will write $G(\R,\theta)$ for $G^\sigma$.

Let $\mathrm{Aut}(G)$ denote the group of \emph{holomorphic} automorphisms of $G$,
and $\mathrm{Int}(G)$ denote the subgroup of inner automorphisms.
We have the usual exact sequence
\begin{equation} \label{exact_aut}
1 \to \mathrm{Int}(G) \to \mathrm{Aut}(G) \to \mathrm{Out}(G)\to 1
\end{equation}
where $\mathrm{Out}(G)$ is the quotient $\mathrm{Aut}(G)/\mathrm{Int}(G)$.

Two involutions $\theta, \theta' \in \mathrm{Aut}(G)$ are said to be \emph{inner to each other}
if they have the same image in~$\mathrm{Out}(G)$.
Thus an \emph{inner class of real forms} is determined by an element $\gamma\in\mathrm{Out}(G)$ 
of order $1$ or $2$. The 
pinning $\mathcal{P}$ of $G$ determines  a splitting of the exact sequence~\eqref{exact_aut},
taking $\gamma$ to a $\mathcal{P}$-distinguished involution of $G$. See \cite[\S 2.1]{Algorithms}.
We still denote by $\gamma$ the corresponding element of~$\mathrm{Aut}(G)$.


Attached to the inner class $\gamma$ and the pinning $\mathcal{P}$ is an \emph{extended group} $G^\Gamma$,
containing~$G$ as a subgroup of order two.
Let $\Gamma = \{ 1, \varsigma\}$ be the Galois group of $\mathbb{R}$.
We define $G^\Gamma$ to be  $G \rtimes \Gamma$,
where $\varsigma \in \Gamma$ acts by the distinguished involution~$\gamma$.
Write $\xi_{\gamma}$ for the element $(1, \varsigma)$ of~$G^\Gamma$,
thus~$G^\Gamma = \langle G, \xi_{\gamma}\rangle$, with $\xi_\gamma^2 = 1$.


\subsection{Strong real forms}
\label{sec:strong}
\label{sec:pure}
Let $Z(G)$ denote the center of $G$.



\subsubsection{} A \emph{strong real form} of $G$ (in the inner class attached to
$\gamma$) is an element $\xi \in G^\Gamma \setminus G$ satisfying
$\xi^2 \in Z(G)$.  By analogy with the Cartan involution, we also
refer to $\xi$ as a \emph{strong involution}.  We say two strong real
forms are equivalent if they are conjugate by $G$.  We denote the set
of strong real forms by $\mathcal{I}(G, \gamma)$, or simply
$\mathcal{I}$.  For $\xi \in \mathcal{I}$, we let $\theta_\xi$ denote
the involution $\mathrm{int}(\xi)$ of $G$, and write~$K_\xi$ for the subgroup
$G^{\theta_\xi}$ of~$G$.


If $\xi$ is a strong real form, 
we  denote by $\mathcal{C}_{\xi}$ its equivalence (conjugacy) class.


The map $\xi\mapsto \theta_\xi$ is a surjection from strong
real forms to real forms (in the given inner class), and factors to a
surjective map \
$\{\text{strong real forms}\}/\!\!\sim \ \twoheadrightarrow \ \{\text{real
  forms}\}/\!\!\sim$.


We define the \emph{central cocharacter} of a strong real form $\xi$ to be 
the element $\xi^2$ of $Z(G)$; this is well defined on equivalence classes. 
Let $z_\ast=\exp(2i\pi\ch{\rho})\in Z(G)$,
where~$\ch{\rho}$ is one-half the sum of positive coroots. This element is 
independent of the choice of Borel subgroup~$B$, and is fixed by all automorphisms of~$G$.
We say a strong real form is \emph{pure} if its central cocharacter is~$z_\ast$.
This includes the quasisplit strong real forms. (When we refer to a property of strong real forms,
such as being quasisplit, we pull it back from real forms, provided it is constant on the fibers.)


\subsubsection{Representations} 
\label{sec:def_representations}
When $\xi$ is a strong real form of $G$, we can consider Harish-Chandra's notion of $(\mathfrak{g}, K_\xi)$-module.
We define a \emph{representation of a strong real form of $G$} to be a pair $(\xi, X)$
where $\xi$ is a strong real form of $G$ and $X$ is a $(\mathfrak{g}, K_\xi)$-module.
We call two pairs $(\xi,X), (\xi', X')$ \emph{equivalent} when there exists $g \in G$ such that
$\xi'=g\xi g^{-1}$ and $X'=X^{g}$, where $X^{g}$ is the $(\g,K_{\xi'})$-module defined by transport of structure using $\int(g)$.
We write $[\xi, X]$ for the equivalence class of~$(\xi, X)$. We say $[\xi, X]$ is \emph{irreducible} if~$X$ is irreducible; this is independent of the choice of~$(\xi,X)$.

It is important to keep in mind that this definition keeps track of the strong real form, not just the real form.
For a useful example \cite[Example 3.3]{DiscreteSeriesSigns}, set $G=\mathrm{SL}(2,\mathbb{C})$ and $\xi=\mathrm{diag}(i,-i)$.
Let $\pi$ be the $(\mathfrak{g},K_\xi)$-module of a holomorphic discrete series for the real form of $G$ corresponding to $\theta_\xi$,
and let $\overline{\pi}$ be the contragredient.
Then there is an inner automorphism of $G$ which takes $\pi$ to $\overline{\pi}$, and also takes $\xi$ to $-\xi$.
This is reflected in the fact that $[\xi, \pi]=[-\xi, \overline{\pi}]$, but $[\xi, \pi] \neq [\xi, \overline{\pi}]$.
Thus we can view the two discrete series representations of $\SL(2,\R)$ in 
the usual way as  $[\xi,\pi]$ and $[\xi,\overline\pi]$ (fixing $\xi$ and varying the representation), 
or alternatively as $[\xi,\pi]$ and $[-\xi,\pi]$ (fixing the representation and varying the strong real form).

\subsubsection{An example of conjugating to do representation theory on a forever-fixed Cartan} \label{example_conjugating}
Let us still consider $G=\mathrm{SL}(2, \mathbb{C})$,  the strong real form $\xi = \mathrm{diag}(i, -i)$, and the involution $\theta=\theta_\xi$.
For an antiholomorphic automorphism $\sigma$ giving $G(\mathbb{R}, \theta)$ as in \S \ref{sec:notation_real_forms},
we may take $g \mapsto J \phantom{a}^t\overline{g}^{-1}J$ with $J=\mathrm{diag}(1, -1)$,
and then $G(\mathbb{R}, \theta)=G^\sigma$ is $\mathrm{SU}(1,1)$.

Let $H$ be the diagonal subgroup of $G$.
Then $H(\mathbb{R}, \theta)$ is a compact Cartan subgroup of $G(\mathbb{R}, \theta)$,
and its importance to the classical theory is that its regular characters can be used
to parametrize the discrete series of $\mathrm{SU}(1,1)$.

Now, suppose we turn to  the principal series of $\mathrm{SU}(1,1)$.
The traditional way of building it uses characters
of another Cartan subgroup $H_1(\mathbb{R}, \theta)$ of $\mathrm{SU}(1,1)$.
As the notation indicates, this real torus comes
from another Cartan subgroup $H_1 \neq H$ of the complex group~$G$.

An important idea for what follows is that it is possible to do representation theory
using only real forms of the fixed Cartan subgoup $H$.
The reason is, of course, that we can conjugate $H_1$ back to $H$
using an element of the complex group $G$.
If we do this, then we have to change the real group under discussion:
it is easy to find an element $u$ of $\mathrm{SL}(2, \mathbb{C})$ such that
$\mathrm{int}(u)$ takes the real group $\mathrm{SU}(1,1)$ to $\mathrm{SL}(2,\mathbb{R})$
and the Cartan subgroup $H_1(\mathbb{R}, \theta)$ to the diagonal subgroup of~$\mathrm{SL}(2,\mathbb{R})$.
Conjugation by $u$ will also take $\xi$ to another strong real form~$\xi'$, with $\mathcal{C}_\xi=\mathcal{C}_{\xi'}$;
and the discussion of the principal series will then switch
 from $(\mathfrak{g}, K_\xi)$-modules to $(\mathfrak{g}, K_{\xi'})$-modules.

Thus we can always  fix the Cartan subgroup $H$
and use a real form of $H$ to discuss representations;
but depending on the representation, the appropriate real torus will sit
either in  $\mathrm{SL}(2, \mathbb{R})$ or in $\mathrm{SU}(1,1)$.
Passing back and forth requires conjugating some of the classical data by the complex group $G$.
The conjugation will affect the strong real forms and representations of strong real forms under discussion,
but not their equivalence classes.


\subsection{The \texttt{KGB} space} 
\label{sec:def_KGB_space}
\label{sec:KGB}

The space of $K$-orbits on $G/B$ plays an important role in this version of the Langlands classification.
We parametrize this space, simultaneously for all strong real forms in the inner class, using the 
{\tt{KGB}} space. 

As in  \S \ref{example_conjugating} we work in terms of our fixed Cartan subgroup $H$.
Define
\[
\widetilde{\mathcal{X}}=\left\{\, \xi \in \mathrm{Norm}_{G^\Gamma\backslash G}(H) \ : \ \xi^2 \in Z(G)\,\right\}.
\]
Thus $\Xt$ is a set of strong real forms and every strong real form is conjugate to one of these.

The group $H$ acts by conjugation on $\widetilde{\mathcal{X}}$, and we set
\begin{equation} \label{kgb_space}
\mathcal{X} = \widetilde{\mathcal{X}}/H.
\end{equation}

Suppose $x\in \X$ and let $\Xt_x$ be the fiber over $x$ in $\Xt$. 
The groups $K_\xi$, $\xi\in\Xt_x$, are 
all isomorphic, by an isomorphism which is canonical up to an inner automorphism of $K_\xi$.
Therefore we set $K_x=\lim_{\xi}(K_\xi)$ (direct limit), 
and define a $(\g,K_x)$ module
to be the corresponding direct limit of $(\g,K_\xi)$-modules.
This makes it possible to talk about $(\g,K_x)$ modules without having to say  ``$(\g,K_\xi)$ modules for a choice of $\xi$ lying over $x$''.
If $X$ is such a $(\g,K_x)$-module, it gives rise to a well-defined equivalence class of representations of strong real forms, which we denote by $[x,X]$; and it is safe to abuse notation slightly and use the terminology  \emph{representation of a strong real form} for the pair $(x,X)$.

Along these lines, we abuse notation slightly when talking about $G$-conjugacy of elements of~$\Xt$ and~$\X$.
For instance, if $x\in \X$, then it makes sense to define
\begin{equation}
\label{e:X[x ]}
\X[x]=\{x\in \X\mid x\text{ is $G$-conjugate to }x  \}.
\end{equation}
Similarly, if 
$\xi$ is a strong real form, then it makes sense to talk about the set
\begin{equation}
\label{e:X[xi]}
\X[\xi]=\{x\in \X\mid x\text{ is $G$-conjugate to the image of }\xi\}.
\end{equation}

As in Section \ref{sec:various_bijections}, fix a set
$\{\xi_i\}_{i\in I}$ of representatives of the equivalence classes $\I/G$ of
strong real forms. If $G$ is semisimple, or more generally if contains 
no $\gamma$-fixed torus, then $I$ is finite.
There is a canonical bijection
\[
\mathcal{X} \ \longleftrightarrow \ \coprod_{i\in I}K_{\xi_i} \backslash G / B.
\]
In this bijection $K_\xi\backslash G/B$ corresponds to the set $\X[\xi]$ of \eqref{e:X[xi]}
For this reason we call $\mathcal{X}$ the {\texttt{KGB} space}.
See \cite[Corollary 9.9]{Algorithms} for more details.

Suppose $x\in \X$. Choose $\xi\in \Xt$ mapping to $x\in \X$. The
restriction of $\theta_\xi$ to $H$ is independent of the choice of
$\xi$, and is denoted $\theta_{x,H}$. Thus each element of $\X$
defines a real form of the Cartan subgroup $H$. Set
\[
 \mathcal{I}_{W} = \left\{\, \theta_{x, H} \ : \ x \in \mathcal{X} \,\right\}.
 \]
The notation $\mathcal{I}_{W}$ comes from \cite[(9.11)]{Algorithms},
which views the involutions $\theta_{x,H}$ as twisted involutions in the Weyl group.
We will come back to this in \S \ref{sec:twisted_involutions} below.

For $\tau \in \mathcal{I}_W$, define the \emph{fiber} $\mathcal{X}_{\tau}$
to be the set of elements $x \in \mathcal{X}$ such that $\theta_{x, H}=\tau$.
This gives us a partition
\begin{equation}\label{partition_tau}
\mathcal{X} = \coprod \limits_{\tau \in \mathcal{I}_W} \mathcal{X}_\tau.
\end{equation}
If $\tau\in \I_W$ and $x\in \X$ let 
\begin{equation}
\label{e:X[x]}
\begin{aligned}
\X_\tau[x]&=\X_\tau\cap \X[x]\\&=\{x'\in \X\mid \theta_{x',H}=\theta_{x,H}\text{ and } x'\text{ is $G$-conjugate to }x\},
\end{aligned}
\end{equation}
and define $\X_\tau[\xi]$ similarly for $\xi \in \Xt$. Then
$$
\X_\tau=\coprod_{i\in I}\X_\tau[\xi_i].
$$
Some of the sets $\X_\tau[\xi_i]$ may be empty.

If $x\in \X$, define an element $z(x)$ of $Z(G)$ by $z(x)=\xi^2$ where $\xi\in\Xt$ is any element mapping to~$x$;
and for $z\in Z(G)$, set $\X(z)=\{x\in \X\mid z(x)=z\}$. In this way we
 get a different partition
$$
\X=\coprod_{z\in Z(G)}\X(z).
$$
This respects the previous partition: set $\X_\tau(z)=\X_\tau\cap \X(z)$, then we have
$$
\X_\tau=\coprod_{z\in Z(G)}\X_\tau(z).
$$

\subsection{Cross action and Cayley transforms for  \texttt{KGB}}
\label{s:crosscayleykgb}
\label{sec:action_weyl_kgb}
\label{sec:def_cross_action}
\label{sec:transitivity_action_srf}

Let $W=\mathrm{Norm}_G(H)/H$ be the Weyl group of $(G,H)$.
Then~$W$ has a natural action on $\mathcal{X}$, as follows.
For $w \in W$ and $x \in \mathcal{X}$, choose representatives
$n \in \mathrm{Norm}_G(H)$ and $\xi \in \widetilde{\mathcal{X}}$
for $w$ and $x$, respectively.
Then $n\xi n^{-1} \in \widetilde{\mathcal{X}}$,
and we define $w \times x$ to be the image of $n\xi n^{-1}$ in $\mathcal{X}$.
We refer to this as the \emph{cross action} of $W$ on $\mathcal{X}$.


Let $\Delta=\Delta(G,H)$ be the set of roots of $H$ in $G$.
Suppose $\tau\in \I_W$ is an involution of $H$. This preserves $\Delta$, 
and we set
\begin{equation}
\begin{aligned}
\Delta_{i,\tau}&=\left\{ \alpha \in \Delta \ : \ \tau(\alpha)=\alpha\right\}\quad \text{the $\tau$-imaginary roots;}\\
\Delta_{r,\tau}&=\left\{ \alpha \in \Delta \ : \ \tau(\alpha)=-\alpha\right\}\quad \text{the $\tau$-real roots.}
\end{aligned}
\end{equation}
See \cite[(12.1)]{Algorithms}.
These are root systems, and  we let $W_{i, \tau},W_{r, \tau}$  denote the Weyl groups of $\Delta_{i,\tau},\Delta_{r,\tau}$ respectively.
These act on $\mathcal{X}_\tau$, and on each piece 
$\mathcal{X}_{\tau}[x]$ of the decomposition \eqref{e:X[x]}. The action of $W_{i, \tau}$ on each piece~$\mathcal{X}_{\tau}[x]$ is transitive.

We say $\alpha$ is $\tau$-complex if it is neither $\tau$-real nor $\tau$-imaginary.
In this case  the cross action of the reflection $s_\alpha \in W$ defines a bijection
\begin{equation} 
(s_\alpha \times~\,{} ) \colon \mathcal{X}_\tau \to \mathcal{X}_{s_\alpha \tau s_\alpha}.
\end{equation}

Suppose $\alpha\in \Delta_{i,\tau}$. Choose an $\alpha$-root vector $X_\alpha$, and a representative $g_\alpha\in\Norm_G(H)$ of~$s_\alpha$.
Then $\mathrm{Ad}(g_\alpha)(X_\alpha)=\pm X_\alpha$. We say $\alpha$ is $\tau$-compact if $\mathrm{Ad}(g_\alpha)$ fixes $X_\alpha$, and $\tau$-noncompact otherwise.
If $x\in\X_\tau$, we also use the term $x$-imaginary for $\tau$-imaginary. We say $\alpha$ is $x$-compact if $\theta_\xi$ fixes $X_\alpha$ for some (equivalently any) $\xi \in \widetilde{\X}_x$, and $x$-noncompact otherwise.

If $x\in\X$, then the stabilizer of $x$ in~$W$ is naturally isomorphic to 
$W(K_\xi,H)=\mathrm{Norm}_{K_\xi}(H)/H$, where $\xi\in\Xt$ is any representative of $x$.
This group is isomorphic to $W(G(\mathbb{R}, \theta_\xi), H(\mathbb{R}, \theta_\xi))$,
and we refer to it as the real Weyl group. See \cite[\S 12]{Algorithms}.

Given an involution $\tau$ of $H$, define
\begin{equation}
  \label{e:Xtau}
  \mathcal{X}_\tau[\alpha] = \{ x \in \mathcal{X}_\tau \ : \ \alpha \text{ is $x$-noncompact}\}.
\end{equation}
Suppose $x\in \X_\tau[\alpha]$.
Choose a representative $g_\alpha\in \Norm_G(H)$ of $s_\alpha$, and  a strong involution  $\xi \in \widetilde{\mathcal{X}}$ representing~$x$.
Then $g_\alpha\xi\in \widetilde{\mathcal X}$.
We refer to the image of this element in $\X$  as the \emph{Cayley transform}~$c^\alpha(x)$; as the notation indicates it is independent of the choices. 
Also $c^\alpha(x)$ is $G$-conjugate to $x$.

The map
\begin{equation}
c^{\alpha}\colon \mathcal{X}_\tau[\alpha] \to \mathcal{X}_{s_\alpha \tau}
\end{equation}
is surjective, and at most two-to-one. We write $c_{\alpha}$ for the inverse Cayley transform:
if $c^\alpha$ is injective, then $c_\alpha(x)$ is a single element $x'$ satisfying $c^\alpha(x')=x$; on the other hand, if $c^\alpha$ is two-to-one then $c_\alpha(x)=\{x',x''\}$ where $c^\alpha(x')=c^\alpha(x'')=x$.
In the latter case, $\alpha$ is $\theta_{x'}$-imaginary, and $s_\alpha\times x'=x''$. 
For all this, see \cite[\S 14]{Algorithms}.



\section{Basic structure theory for $^LG$}
\label{sec:structure_LG}

This section collects standard material on Langlands parameters. We  follow \cite{ABV, Algorithms, Contragredient}.
Recall we have fixed $G$,
a pinning $(B,H,\{X_\alpha\})$, and an inner class of real forms defined by an involution $\gamma\in\Out(G)$.



\subsection{The $L$-group} 
\label{sec:Lgroup}
\label{dual_involution}


Let $X^\ast(H), X_\ast(H)$ be the character and cocharacter lattices of $H$, respectively.
Consider $\ch H=X^\ast(H)\otimes \C^\times$, the complex torus dual to $H$.
Then $X^\ast(H)=X_\ast(\ch H)$ and $X_\ast(H)=X^\ast(\ch H)$ (these are canonical identifications).
Also the Lie algebra $\ch{\mathfrak{h}}$ is canonically identified with $\mathfrak{h}^\ast$ 
(the vector space dual of the Lie algebra of $H$).

We make frequent use of the elements $\rho\in X^*(H)= X_*(\ch H)$ and $\ch\rho\in X_*(H)=X^*(\ch H)$,
where $\rho$ (respectively $\ch\rho$) is one-half the sum of the positive roots (resp. coroots) of $B$.

We  often  consider involutive automorphisms of $H$ and $\ch{H}$
that are dual to each other.
If~$\tau$ is a holomorphic involutive automorphism of $H$, with differential $d\tau$, consider
the transpose $d\tau^\top$ 
as an endomorphism of $\ch\h$. Let $\ch\tau$ be the holomorphic involution of 
$\ch H$ with differential $d(\ch\tau)=-d\tau^\top$.
Notice the minus sign: if $\tau$ is the identity on $H$,
then~$\ch{\tau}$ is inversion on $\ch{H}$.


Let $\Pi, \ch{\,\Pi}$ be the sets of simple roots and simple coroots of $G$ defined by $B$.
The \emph{based root datum} defined by~$(B,H)$ is the quadruple
$\mathscr{D}=(X^\ast(H), \Pi, X_\ast(H), \ch{\,\Pi})$.
The \emph{dual} of $\mathscr{D}$ is the quadruple
$\ch{\mathscr{D}}=(X_\ast(H), \ch{\,\Pi}, X^\ast(H), {\Pi})$.
We  use it to construct the connected complex dual group~$\ch{G} \supset \ch{H}$
(see \cite[\S 2]{Algorithms} and \cite[\S 2 and \S 6]{Contragredient}),
and we equip it with a pinning $\ch{\mathcal{P}}=(\ch{H}, \ch{B}, \{X_{\ch{\alpha}}\})$
such that  the based root datum defined by~$(\ch{B}, \ch{H})$ is $\ch{\mathscr{D}}$.
The Weyl group of~$(\ch{G}, \ch{H})$ canonically identifies with~$W=W(G,H)$.


We turn to the group $\lgr{G}$.
This is a split extension of $\ch{G}$ by $\mathbb{Z}/2\mathbb{Z}$, 
depending on the inner class $\gamma$,
so describing it amounts to describing an automorphism of $\ch{G}$.

The automorphism $\gamma\in\Out(G)$ 
determines an automorphism $\overline{\gamma}$ of the based root datum $\mathscr{D}$
(for the notions of isomorphism of based root data and transpose isomorphism, see  \cite[\S 2]{Algorithms}).
On the dual side, define an automorphism of $\ch{\mathscr{D}}$ as
$\vartheta_0=-w_0 \overline{\gamma}^\top$, where $w_0$  is the long element of the Weyl group
(again note the minus sign).
Let $\ch\gamma$ be the unique $\ch{\P}$-distinguished automorphism of $\ch G$ 
which induces $\vartheta_0$ on $\mathscr{D}^\vee$.
The $L$-group of $G$ is  $\lgr{G} = \langle \ch{G}, \ch{\delta}\rangle$,
where $(\ch{\delta})^2=1$ and $\ch\delta$ acts by~$\ch{\gamma}$ on~$\ch{G}$.


\subsection{Langlands parameters} 
\label{sec:l_params}
\label{sec:l_hom}

Let $\mathbf{W}_{\mathbb{R}}$ be the Weil group of $\mathbb{R}$.
By definition  $\WR=\langle \mathbb{C}^\times, j\rangle$,
with relations $j^2 = -1$ and $jzj^{-1}=\overline{z}$ for $z \in \mathbb{C}^\times$,
where the bar denotes complex conjugation.
A map $\phi\colon \mathbf{W}_{\mathbb{R}} \to \lgr{G}$ is said to be an \emph{$L$-homomorphism}
if it is a continuous group homomorphism,   $\phi(\mathbb{C}^\times)$ consists of semisimple elements,
and $\phi(j)  \in \lgr{G}\setminus\!\!\ch{G}.$
The connected  group $\ch{G}$ acts on the set of $L$-homomorphisms by conjugation on the range,
and we define an \emph{$L$-parameter} to be a~$\ch{G}$-conjugacy class of $L$-homomorphisms.


Suppose $\phi\colon \mathbf{W}_{\mathbf{R}} \to\lgr{G}$ is an $L$-homomorphism.
We set
\begin{equation}
\label{e:L}
\ch L_\phi=\Cent_{\ch G}(\phi(\C^\times)).
\end{equation}
Since $\phi(\mathbb{C}^\times)$ is connected, abelian, and consists of semisimple elements,
this is a connected reductive group.
We say $\phi$ is \emph{aligned with $\ch H$} if $\phi(\C^\times)$ is contained in~$\ch{H}$ and $\phi(j)$ normalizes~$\ch{H}$.

\begin{lemma}\label{l:stdLhom}
Every $L$-homomorphism is $\ch G$-conjugate to one which is aligned with~$\ch H$.

Assume $\phi$ is an $L$-homomorphism aligned with~$\ch{H}$. Then there exists $\lambda\in X_*(\ch H)\otimes\C$ and $y \in \lgr{G}\setminus \ch{G}$,
normalizing $\ch H$, 
satisfying:
\begin{subequations}
\renewcommand{\theequation}{\theparentequation)(\alph{equation}}
\label{e:phistd}
\begin{equation}
\begin{aligned}
\lambda-y\lambda&\in\X_*(\ch H),\\
y^2&=e^{2\pi i\lambda},
\end{aligned}
\end{equation}
such that $\phi$ is defined by:
\begin{equation}
\label{e:phistd2}
\begin{aligned}
\phi(z)&=z^\lambda\overline z^{\mathrm{Ad}(y)\lambda},\\
\phi(j)&=e^{-\pi i\lambda}y.
\end{aligned}
\end{equation}
\end{subequations}
If a pair $(\lambda,y)$ satisfies  \textup{(a)}, then the map $\phi$ defined by \textup{(b)} is an $L$-homomorphism.
\end{lemma}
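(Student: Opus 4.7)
The plan is to separate the lemma into three tasks: (i) any $L$-homomorphism is $\ch{G}$-conjugate to one aligned with $\ch{H}$; (ii) every aligned $\phi$ has the form (b) for some pair $(\lambda, y)$ satisfying (a); and (iii) every such pair defines, via (b), an $L$-homomorphism.

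For (i), I would start from the observation that $\phi(\mathbb{C}^\times)$ is a connected abelian subgroup of semisimple elements of $\ch{G}$, hence contained in some maximal torus of $\ch{G}$; after a $\ch{G}$-conjugation I may assume $\phi(\mathbb{C}^\times) \subset \ch{H}$. Since $j$ normalizes $\mathbb{C}^\times$ in $\WR$, the element $\phi(j)$ normalizes $\phi(\mathbb{C}^\times)$, and a short computation shows it therefore normalizes the connected reductive group $\ch{L}_\phi$ defined in \eqref{e:L}. Then both $\ch{H}$ and $\phi(j)\ch{H}\phi(j)^{-1}$ are maximal tori of $\ch{L}_\phi$, hence $\ch{L}_\phi$-conjugate by some $\ell$. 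Since $\ell$ centralizes $\phi(\mathbb{C}^\times)$, conjugating $\phi$ by $\ell$ preserves $\phi\rvert_{\mathbb{C}^\times}$ pointwise while bringing $\phi(j)$ into $\Norm_{\lgr{G}}(\ch{H})$, completing the alignment.

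For (ii), assume $\phi$ is aligned. Continuous homomorphisms $\mathbb{C}^\times \to \ch{H}$ must have the form $z \mapsto z^\lambda \overline{z}^{\mu}$ with $\lambda, \mu \in \ch{\h}$, and single-valuedness in the polar argument forces $\lambda - \mu \in X_*(\ch{H})$. Introduce $y := e^{\pi i \lambda}\phi(j)$; this still normalizes $\ch{H}$ and lies in $\lgr{G} \setminus \ch{G}$, and since $e^{\pi i \lambda} \in \ch H$ acts trivially under $\mathrm{Ad}$, one has $\mathrm{Ad}(y) = \mathrm{Ad}(\phi(j))$ on $\ch{\h}$. The Weil-group relation $\phi(j)\phi(z)\phi(j)^{-1} = \phi(\overline{z})$ therefore forces $\mu = \mathrm{Ad}(y)\lambda$, and the relation $\phi(j)^2 = \phi(-1) = e^{i\pi(\lambda-\mu)}$, combined with $y e^{-\pi i\lambda} = e^{-\pi i\,\mathrm{Ad}(y)\lambda}\,y$, reduces cleanly to $y^2 = e^{2\pi i \lambda}$. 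This is precisely condition (a), and the formulas (b) hold by construction.

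Step (iii) is a direct verification: formula (b) puts $\phi(\mathbb{C}^\times)$ inside the torus $\ch{H}$ (hence semisimple) and sends $j$ outside $\ch{G}$, so it suffices to check the two Weil-group relations $j^2=-1$ and $jzj^{-1}=\overline z$. Both become algebraic identities in $\lgr{G}$ that follow from (a); in particular $\mathrm{Ad}(y)^2 = \mathrm{Ad}(y^2)$ is trivial on $\ch{\h}$ since $y^2 = e^{2\pi i \lambda} \in \ch{H}$, which is exactly what one needs to close up the computation of $\phi(j)\phi(z)\phi(j)^{-1}$. The main piece of bookkeeping in the whole argument is the half-integer shift $\phi(j) = e^{-\pi i \lambda} y$: it is engineered so that $\phi(j)^2 = \phi(-1)$ translates to $y^2 = e^{2\pi i \lambda}$ rather than the clumsier $\phi(j)^2 = e^{i\pi(\lambda - \mathrm{Ad}(y)\lambda)}$ one gets when working directly with $\phi(j)$; keeping track of these factors of $e^{\pm \pi i \lambda}$ is really the only subtle part.
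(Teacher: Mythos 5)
Your steps (ii) and (iii) are correct and match the paper's intent (the paper relegates them to a reference, calling them ``straightforward''). The problem is in step (i), in the passage from ``$\phi(j)$ normalizes $\ch L_\phi$'' to alignment. You argue that $\ch H$ and $\phi(j)\ch H\phi(j)^{-1}$ are maximal tori of $\ch L_\phi$, hence conjugate by some $\ell\in\ch L_\phi$, and that conjugating $\phi$ by $\ell$ finishes the job. It does not: the conjugated homomorphism sends $j$ to $\ell\phi(j)\ell^{-1}$, and this element normalizes $\ch H$ if and only if $\phi(j)$ normalizes $\ell^{-1}\ch H\ell$. But for \emph{every} $\ell$ with $\ell\bigl(\phi(j)\ch H\phi(j)^{-1}\bigr)\ell^{-1}=\ch H$ one has $\ell^{-1}\ch H\ell=\phi(j)\ch H\phi(j)^{-1}$, so the condition becomes $\phi(j)^2\ch H\phi(j)^{-2}=\phi(j)\ch H\phi(j)^{-1}$. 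Since $\phi(j)^2=\phi(-1)$ lies in $\phi(\C^\times)$, which is central in $\ch L_\phi$, the left side is $\ch H$, and the condition collapses to $\phi(j)\ch H\phi(j)^{-1}=\ch H$ --- exactly the statement you are trying to prove. The conjugacy of the two tori is automatic (all maximal tori of $\ch L_\phi$ are conjugate) and carries no information; your deduction is circular.

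The missing ingredient is the one the paper actually uses: $\mathrm{int}(\phi(j))$ restricts to an \emph{automorphism} of the connected reductive group $\ch L_\phi$ (an involution, since $\phi(j)^2$ is central there), and any such automorphism stabilizes \emph{some} maximal torus $T_0$ of $\ch L_\phi$. This is a standard but non-vacuous structural fact, not a consequence of conjugacy of maximal tori. Granting it, choose $\ell\in\ch L_\phi$ with $\ell T_0\ell^{-1}=\ch H$; then
\begin{equation*}
\bigl(\ell\phi(j)\ell^{-1}\bigr)\,\ch H\,\bigl(\ell\phi(j)\ell^{-1}\bigr)^{-1}
=\ell\,\phi(j)T_0\phi(j)^{-1}\,\ell^{-1}=\ell T_0\ell^{-1}=\ch H,
\end{equation*}
and since $\ell$ centralizes $\phi(\C^\times)$ the restriction of $\phi$ to $\C^\times$ is unchanged, as you wanted. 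With this substitution the rest of your argument goes through.
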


Given $(\lambda,y)$ satisfying~\eqref{e:phistd}(a), we denote by $\phi(\lambda,y)$ the $L$-homomorphism defined by~\eqref{e:phistd2}.

\begin{proof}
We know that $\phi(\C^\times)$ is contained in the identity component of the center of $\ch L_\phi$, which is a torus.
So, after conjugating by $\ch G$ we may assume $\phi(\C^\times)\subset \ch H$, and then $\ch H\subset \ch L_\phi$.
Now the involutive automorphism 
$\ch\tau=\mathrm{int}(\phi(j))$ normalizes $\ch L_\phi$, and therefore normalizes a Cartan subgroup of $\ch L_\phi$,
which is also a Cartan subgroup of $\ch G$. 
So, after conjugating by $\ch L_\phi$ we may assume $\ch\tau$ normalizes $\ch H$.
The remaining assertions are straightforward. See \cite[Section 6]{Contragredient}.
\end{proof}

So assume $\phi$ is aligned with $\ch H$. Set
\begin{subequations}
\renewcommand{\theequation}{\theparentequation)(\alph{equation}}
\label{e:phi}
\begin{equation}
\label{e:phia}
\ch\tau=\int(\phi(j));
\end{equation}
this is an involution of~$\ch{L}_\phi$ which preserves $\ch{H}$. 
Write
\begin{equation}
\label{e:phib}
 \phi(z) = z^{\lambda} \overline{z}^{\ch\tau(\lambda)}  \quad (z \in \mathbb{C}^\times)
\end{equation}
\end{subequations}
where $\lambda \in\,^\vee\mathfrak{h}$.
The $W(G,H)$-orbit of $\lambda$  (considered as an element of $\ch\h\simeq\h^\ast$) depends only on the $\ch{G}$-conjugacy class of $\phi$.
We call it the \emph{infinitesimal character of $\phi$}.

\begin{exem}
\label{ex:fundamental}
Let $y=e^{\pi i\ch\rho}$.
The pair $(\rho,y)$ satisfies \eqref{e:phistd}(a). If~$\phi=\phi(\rho, y)$, then we shall see that the corresponding $L$-packet $\Pi(\phi)$ is 
the $L$-packet of fundamental series of the quasisplit form of $G$, with infinitesimal character $\rho$. 
See Example~\ref{ex:fundamental2} and the~\href{sec:appendix}{Appendix}.
\end{exem}

\subsection{Cross action and Cayley transforms for $L$-homomorphisms}
\label{s:crosscayleyL}

As discussed in the Introduction, cross actions and Cayley transforms play a central role.
We already discussed these in the context of the {\tt KGB} space. Here is the
corresponding discussion for $L$-homomorphisms.

Defining Cayley transforms and the cross action for $L$-homomorphisms is already nearly covered by the corresponding definitions for {\tt KGB}:
the action on $\phi(\lambda,y)$ is via the action on $y$. The only minor issue is that for $y$ to be a {\tt KGB}
element it has to satisfy $y^2\in Z(\ch G)$. Since $y^2=\exp(2\pi i\lambda)$ this is the case when $\phi$ has 
integral infinitesimal character, but does not hold in general.
Nevertheless, the definitions of Section \ref{s:crosscayleykgb} carry over with minor changes.

Suppose $\phi=\phi(\lambda,y)$ is an $L$-homomorphism aligned with~$\ch{H}$ (Lemma \ref{l:stdLhom}).
Let $\Psiint=\Psiint(\lambda)$ be the set of integral roots: $\{\alpha\mid \langle\lambda, \ch\alpha\rangle\in\Z\}$.

Suppose $w\in W(\Psiint)$  and  $n\in \Norm_{\ch G}(\ch H)$ represents
$w$.
Then
just as in Section \ref{s:crosscayleykgb}, it is easy to see
that $nyn\inv$ satisfies $(nyn\inv)^2=y^2$,
so 
$\phi(\lambda,nyn\inv)$ is well defined, and its~$\ch G$-conjugacy class is independent
of the choice of $n$. We define $w\times\phi(\lambda,y)=\phi(\lambda, nyn\inv)$.

Now assume $\alpha$ is a simple root of $\Psiint$ which is $\theta_{y, \ch{H}}$-imaginary and noncompact. The latter condition
is that $\mathrm{Ad}(y)$ does not fix $X_\alpha$. 
The definition of $c^\alpha$ in Section \ref{s:crosscayleykgb} carries over immediately to this setting; the only change
is that $c^\alpha(y)^2$ is no longer central, but satisfies \mbox{$c^\alpha(y)^2=y^2=\exp(2\pi i\lambda)$}.
We define $c^\alpha(\phi(\lambda, y))=\phi(\lambda,c^\alpha(y))$, and refer to this as the Cayley transform of $\phi$.

\subsection{$E$-groups for tori, and characters of two-fold covers}
\label{sec:duality_tori}
\label{sec:description_duality}


Suppose  $\phi\colon \mathbf{W}_{\mathbf{R}} \to\lgr{G}$ is an $L$-homomorphism aligned with $\ch{H}$.
Then $\langle \ch{H}, \phi(j)\rangle$ is an extension of $\ch{H}$ of order two.
In general, it is not isomorphic to the $L$-group of~$H$,
but it is still crucial for describing the Langlands correspondence.

Suppose we are given a complex torus $H$ and an involution $\tau$.
An \emph{$E$-group for $H$ and $\tau$} is a group $\egr{H}=\langle \ch{H}, \ch{\xi}\rangle$
where $\ch{\xi}$ acts on $\ch{H}$ by the dual involution $\ch{\tau}$,
and  $\xi^2\in\ch H^{\ch\tau}$.


The way $E$-groups enter the discussion below is through their relation
with characters of certain double covers of real forms of $H$.
Suppose $\varrho$ is an element of $\frac{1}{2}X^\ast(H)$.
Define
\[
 \widetilde{H}_\varrho=
 \left\{\, (h,z) \in H \times \mathbb{C}^\times \ : \ (2\varrho)(h)=z^2\,\right\}.
 \] This is a two-fold cover of $H$, with projection  $(h,z) \mapsto h$.

Given an involution $\tau$ of $H$, consider the corresponding real form $H(\mathbb{R}, \tau)$.
(Since $H$ is abelian, this is canonically defined; see \S \ref{sec:notation_real_forms}.)
Lift it to a subgroup $\widetilde{H}(\mathbb{R}, \tau)_{\varrho}$ of~$\widetilde{H}_\varrho$.
Again, this is a two-fold cover of $H(\mathbb{R}, \tau)$; we will call it the \emph{$\varrho$-cover} of $H(\mathbb{R},\tau)$.
A character of $\widetilde{H}(\mathbb{R}, \tau)_{\varrho}$ is called \emph{genuine}
if is nonconstant on the fibers of the covering $\widetilde{H}(\mathbb{R}, \tau)_{\varrho}$.


We will use $E$-groups to parametrize the genuine characters of  $\widetilde{H}(\mathbb{R}, \tau)_{\varrho}$.
Viewing~$\varrho$ as an element of $\ch{\mathfrak{h}}$, consider  $\egr{H}=\langle \ch{H}, \ch{\xi}\rangle$,
where $\ch{\xi}$ acts on $\ch{H}$ by $\ch{\tau}$ and~$\ch{\xi}^2=\exp(2i\pi\varrho)$.
This is  uniquely determined by $\varrho$ up to isomorphism.

We say a homomorphism $\phi\colon \mathbf{W}_{\mathbb{R}} \to \egr{H}$  is \emph{admissible}
if it is continuous and $\phi(j) \in \egr{H}\setminus\ch{H}$.
Then the genuine characters of $\widetilde{H}(\mathbb{R}, \tau)_{\varrho}$ can naturally be parametrized
by $\ch{H}$-conjugacy classes of admissible homomorphisms $\mathbf{W}_{\mathbb{R}} \to \egr{H}$.
For a complete discussion of the correspondence, see \cite[Lemma 3.3]{Contragredient} and \cite[Section 5]{AV1}.
To give a quick description, note that an admissible homomorphism  $\phi\colon \mathbf{W}_{\mathbb{R}} \to \egr{H}$ can be written
\begin{equation}\label{phi_for_duality} \begin{cases}
 \phi(z)  = z^{\lambda} \overline{z}^{\ch{\tau}(\lambda)}  \quad (z \in \mathbb{C}^\times), \\
 \phi(j)  = \exp(2i\pi\mu) \ch{\xi},\end{cases}
\end{equation}
where $\lambda, \mu \in \ch{\mathfrak{h}}$,
and where the fact that $\phi$ is a group homomorphism forces
\[ \kappa = \frac12 (1-\ch{\tau})\lambda - (1+\ch{\tau})\mu \]
to be an element of $\varrho + X^\ast(H)$ satisfying $(1+\tau)\lambda = (1+\tau)\kappa$.
There is a unique genuine character~$\Lambda_\phi$ of~$\widetilde{H}(\mathbb{R}, \tau)_{\varrho}$ which satisfies:
(1) $d\Lambda_\phi=\lambda\in\mathfrak h^*$
and
(2) the restriction of $\Lambda_\phi$
to the canonical maximal compact subgroup of~$\widetilde{H}(\mathbb{R}, \tau)_{\varrho}$
is given by $\kappa$. See \cite[Proposition~5.8]{AV1}.
The character~$\Lambda_\phi$ depends only on the~$\ch{H}$-conjugacy class of $\phi$,
and  $\phi \rightsquigarrow \Lambda_\phi$ induces a bijection
between~$\ch{H}$-conjugacy classes of admissible homomorphisms $\phi\colon \mathbf{W}_{\mathbb{R}} \to \egr{H}$
and genuine characters of $\widetilde{H}(\mathbb{R}, \tau)_{\varrho}$.

\subsection{Complete Langlands Parameters}
\label{sec:componentgroups}
\label{sec:def_sphi}
\label{subsec:cover}
\label{sec:def_complete_parameters}

Given an $L$-homomorphism $\phi\colon \mathbf{W}_{\mathbb{R}} \to \lgr{G}$,
consider the centralizer
\begin{equation} \label{centralizer_phi}
\ch{G}_\phi=\mathrm{Cent}_{\ch{G}}(\phi(\mathbf{W}_{\mathbb{R}})).
\end{equation}
Let $\mathbb{S}_\phi$ be the component group of $\ch{G}_\phi$.
It is a finite product of copies of $\mathbb{Z}/2\mathbb{Z}$.


We introduce a canonical covering $\widetilde{\mathbb{S}}_\phi$, as in \cite[Definition 5.11]{ABV}.
Let $\ch{G}^{\mathrm{alg}}$ be the projective limit of all finite coverings of $\ch{G}$.
There is an exact sequence
\[ 1 \to \pi_1(\ch{G})^{\mathrm{alg}} \to \ch{G}^{\mathrm{alg}}\,{}\overset{\mathrm{proj}}\longrightarrow\,{} \ch{G}\to 1\]
where $\pi_1(\ch{G})^{\mathrm{alg}}$ is the projective limit of all finite quotients of $\pi_1(\ch{G})$.
Consider the inverse image \mbox{$\ch{G}_{\phi}^{\mathrm{alg}} = \mathrm{proj}^{-1}(\ch{G}_\phi)$},
and define  $\widetilde{\mathbb{S}}_\phi$ to be the component group of $\ch{G}^{\mathrm{alg}}_\phi$.
This is  an abelian group, possibly infinite.
There is a canonical surjection $\widetilde{\mathbb{S}}_\phi \to \mathbb{S}_\phi$. See  \cite[p.~61]{ABV}.
When a character of $\widetilde{\mathbb{S}}_\phi$ is in the image
of the dual injection $\Char(\mathbb{S}_\phi) \hookrightarrow \Char(\widetilde{\mathbb{S}}_\phi)$,
we shall sometimes say (improperly) that it  is, in fact, a character of $\mathbb{S}_\phi$.

We define a \emph{complete Langlands parameter} to be a pair $(\phi,\chi)$ consisting of
a Langlands parameter $\phi$ and a character $\chi$ of $\tSphi$.
The conjugation action of $\ch G$ on $L$-homomorphisms extends, after passage to coverings,
to an action of $\chGalg$ on pairs $(\phi,\chi)$.
More precisely, suppose $\phi$ is an $L$-homomorphism.
Then for any element~$\ch{\tilde g}$  in the covering~$\ch{G}^{\mathrm{alg}}$,
conjugation by the element~$\ch{g}=\mathrm{proj}(\ch{\tilde g})$ of $\ch{G}$ takes $\ch{G}_{\phi}$ to $\ch{G}_{\mathrm{int}(\ch{g})\phi}$.
Furthermore, conjugation by $\ch{\tilde g}$ induces a bijection
$\Char(\ch{\tilde g}): \Char(\widetilde{\mathbb{S}}_\phi) \to \Char(\widetilde{\mathbb{S}}_{\mathrm{int}(\ch{g})\phi})$
of character groups.
We say two complete Langlands parameters are \emph{equivalent} if they are conjugate by $\chGalg$.

\begin{exem}
\label{ex:fundamental2}
The complete Langlands parameter $(\phi,\mathbf{1})$, where $\mathbf{1}$ is the trivial character of~$\widetilde{\mathbb{S}}_\phi$,
plays a special role:  this defines a particular generic representation of the quasisplit form of $G$.

In particular, consider the $L$-homomorphism $\phi=\phi(\rho,y)$ of Example \ref{ex:fundamental}. Then $(\phi,\mathbf{1})$ is a complete Langlands parameter. 
The corresponding representation is a {\it large} fundamental series of the quasisplit form of $G$. This plays an important
role in the relationship with Whittaker models. For details see the \href{sec:appendix}{Appendix}.
\end{exem}



\section{Dictionary between characters of component groups and \texttt{KGB} elements}\label{sec:dictionary_KGB_charcompgroup}

Suppose $\phi\colon \mathbf{W}_{\mathbb{R}} \to \lgr{G}$ is an $L$-homomorphism.
Let $\Pi(\phi)$ be the corresponding `large' $L$-packet:
it comprises representations of the various (strong) real forms of $G$ in the given inner class.
For a description of the subset of $\Pi(\phi)$ attached to each real form of $G$,
see \cite{Borel, Contragredient} and \S \ref{sec:L_packets} below.

The refined version of the Langlands correspondence
parametrizes the individual representations in $\Pi(\phi)$
by characters of the abelian group $\widetilde{\mathbb{S}}_\phi$.
In this section and the next, we give an exposition of the refined parametrization.
In the case of  discrete series  this is discussed in~\cite{DiscreteSeriesSigns}.

The subtle aspect of the refined correspondence
is how a character of $\tSphi$ determines first of all a strong real form of $G$,
and secondly a representation in the $L$-packet for this real form.
The view that we shall adopt here is that
the \texttt{KGB} space $\mathcal{X}$ of \S \ref{sec:KGB} is perfectly suited for this.
We describe a natural bijection between the character group $\Char(\widetilde{\mathbb{S}}_\phi)$
and a subset of $\mathcal{X}$.
This is precisely the information we need. For instance, it makes the first step quite clear:
given a character  of $\tSphi$, the corresponding element of $\X$ defines
the appropriate strong real form of $G$.


\subsection{$L$-homomorphisms in standard form} \label{sec:tau_phi}
\label{sec:involutions}
\label{sec:ambiguity_tau}
\label{sec:construction_involution}


Let us begin with an $L$-homomorphism  $\phi\colon \mathbf{W}_{\mathbb{R}} \to \lgr{G}$.
After conjugating by $\ch G$ we may assume it is
aligned with $\ch{H}$, and write it as in \eqref{e:phi}.
Our bijection between $\Char(\widetilde{\mathbb{S}}_\phi)$ and a subset of $\mathcal{X}$
will have its image entirely contained in a single fiber $\mathcal{X}_{\tau}$ of $\Char(\widetilde{\mathbb{S}}_\phi)$.

We are given the involution $\int(\phi(j))$ of $\ch H$.
If the infinitesimal character for~$\phi$ is regular, then this involution defines the correct
real form of the dual Cartan subgroup. If the infinitesimal character is singular,
there is a choice involved and we may want to use a different involution~$\ch{\tau}$.

For example, suppose $G=\mathrm{PGL}(2,\R)$, $\ch G=\SL(2,\C)$,
$\ch H=\{\mathrm{diag}(z,z^{-1})\}$ and
$\phi(\C^\times)=1$. If  $\phi(j)=\mathrm{diag}(i,-i)\ch\delta$,
then $\phi(j)$ acts trivially on $\ch H$. However $\phi(j)$ is conjugate to $\left(\begin{smallmatrix}0&1\\-1&0\end{smallmatrix}\right)\ch\delta$,
which acts by inversion on $\ch H$; so after conjugating $\phi$ we may get a different involution of~$\ch{H}$.
We prefer the second choice, which makes $\ch H(\R)$ split, and therefore makes $H(\R)$ compact.
This example is key to the discussion of the $L$-packet of limits of discrete series for $\mathrm{SL}(2,\R)$.

We are free to modify $\phi$ by the Weyl group $W(\ch G,\ch H)$. We could therefore assume $\lambda$ is weakly dominant for the set~$\Delta^+$ of positive roots attached to our fixed
Borel subgroup~$B$. For the needs of Section~\ref{sec:local_langlands} it is convenient to impose a weaker condition, which appears in the following definition.

\begin{defi}
\label{d:stdform}
Suppose $\phi$ is an $L$-homomorphism. We say $\phi$ is in \emph{standard form} if it satisfies the following three conditions. First,
\begin{enumerate}
\item[(1)] $\phi$ is aligned with $\ch H$ (cf. \eqref{e:phi}).
\end{enumerate}
Define $\ch L_\phi=\Cent_{\ch G}(\phi(\C^\times))$ and let $\ch\tau$ be the involution $\int(\phi(j))$ of $\ch{L}_\phi$, as in \eqref{e:L} and~\eqref{e:phia}. Then we require:
\begin{enumerate}
\item[(2)] $\ch H$ is maximally $\ch{\tau}$-split in $\ch L_\phi$;
\item [(3)] $\lambda$ is {\it weakly integrally dominant} for the positive $\ch\tau$-real roots: \\ if $\alpha$ is a positive $\ch\tau$-real root then $\langle \gamma,\ch\alpha\rangle\not\in\{-1, -2, -3, \dots\}$.
\end{enumerate}

\end{defi}

By definition (2) means $\ch{H}$ is a $\ch\tau$-stable maximal torus of $\ch{L}_\phi$
and the~$(-1)$ eigenspace of $\ch\tau$ on ${\ch{\mathfrak{t}}}$ is a maximal semisimple subalgebra
in the $-1$ eigenspace $(\ch{\mathfrak{l}}_\phi)^{-\ch\tau}$. See \cite[Lemma~12.10]{ABV}.
For a discussion related to (3) see \cite[(3.12)--(3.15)]{TwistedParameters}.

If $\phi$ is in standard form, we denote by $\ch{\tau}(\phi)$ the involution $\int(\phi(j))$ of $\ch{H}$, and denote by~$\tau(\phi)$ the corresponding involution $-\ch\tau(\phi)^\top$ of~$H$.

For further discussion of this notion in the case of $\mathrm{SL}(2, \R)$, see Section~\ref{sec:example_SL2}.




Suppose  $\phi\colon \mathbf{W}_{\mathbb{R}} \to \lgr{G}$ is an $L$-homomorphism in standard form, and
let $\tau=\tau(\phi),\ch\tau=\ch\tau(\phi)$. In the rest of this section, we will explore the link between the component group~$\widetilde{\mathbb{S}}_\phi$ and the fiber~$\mathcal{X}_{\tau}$ of the \texttt{KGB} space.
In \S\ref{sec:S_tau} and \S\ref{sec:S_phi_and_S_tau}, we will describe a free action of $\Char(\widetilde{\mathbb{S}}_\phi)$ on  $\mathcal{X}_{\tau}$.
In~\S\ref{sec:basepoint}, we will choose a basepoint in~$\mathcal{X}_{\tau}$,
and obtain a bijection between $\Char(\widetilde{\mathbb{S}}_\phi)$ and a subset of $\mathcal{X}_{\tau}$.


\subsection{The groups $\widetilde{\mathbb{S}}_{\ch{\tau}}$}\label{sec:S_tau}
Given an involution~$\tau$ of~$H$, let~$\ch{\tau}$ be the dual involution of~$\ch{H}$. Set
\begin{equation} \label{s_tau}
 \mathbb{S}_{\ch{\tau}} = \text{component group of $(\ch{H})^{\ch{\tau}}$.}
  \end{equation}
 Form the covering $\widetilde{\mathbb{S}}_{\ch{\tau}}$, as in \S \ref{subsec:cover}: if $\ch{H}^{\mathrm{alg}}$ is the preimage of $\ch{H}$ in the covering $\ch{G}^{\mathrm{alg}} \to \ch{G}$, let~$\ch{H}^{\alg, \ch{\tau}}$ be the preimage of~$(\ch H)^{\ch\tau}$, and let~$\Schtautilde$ be the component group of~$\ch{H}^{\alg, \ch{\tau}}$. It is an abelian group, possibly infinite. The natural map $\ch{H}^{\alg, \ch{\tau}} \to \ch{H}^{\ch{\tau}} \to \mathbb{S}_{\ch{\tau}}$ is surjective, and its kernel contains the identity component $(\ch{H}^{\alg, \ch{\tau}})_0$; therefore it induces a surjection $\widetilde{\mathbb{S}}_{\ch{\tau}} \to {\mathbb{S}}_{\ch{\tau}}$.
 
 The  character group~$\Char(\widetilde{\mathbb{S}}_{\ch{\tau}})$ is crucial to our dictionary between characters of component groups and \texttt{KGB} elements,
because it turns out to have a natural simply transitive action on the fiber  $\mathcal{X}_{\tau}$. Let us explain this.


\subsubsection{} \label{sec:isomorphism_torusgroups}
First, we observe that $\Char(\widetilde{\mathbb{S}}_{\ch{\tau}})$ is isomorphic with a group
that can be defined entirely on the $G$-side.
This discussion follows \cite[Chapter 9]{ABV}. Define 
$$
H^{-\tau}=\{h\in H\mid \tau(h)=h\inv\}.
$$
We will need the larger group
$$
H^{-\tau}_Z=\{h\in H\mid h\tau(h)\in Z(G)\}\supset H^{-\tau}.
$$
Let $A_\tau$ be the identity component of $H^{-\tau}$. Then $A_\tau=\{h\tau(h\inv)\mid h\in H\}$ and
$$
A_\tau\subset H^{-\tau}\subset H^{-\tau}_Z.
$$
We denote by~$\U_\tau$ the quotient $H^{-\tau}/A_\tau$, and by $\Ut_\tau$ the quotient $H^{-\tau}_Z/A_\tau$. The group $\U_\tau$ may be viewed as a subgroup of $\Ut_\tau$. We will define explicit isomorphisms $\Ut_\tau\simeq \Char(\widetilde{\mathbb{S}}_{\ch{\tau}})$ and $\U_\tau\simeq \Char({\mathbb{S}}_{\ch{\tau}})$.

Let us begin with the natural short exact sequence
\begin{equation}\label{exact_sequence_1}
0\rightarrow \mathbb S_{\ch\tau}={\ch H}^{\ch\tau}/(\ch H^{\ch\tau})_0 \rightarrow \ch H/(\ch H^{\ch\tau})_0 \rightarrow \ch H/\ch H^{\ch\tau}\rightarrow 0
\end{equation}
where all maps are induced by the inclusion of $\ch{H}^{\ch{\tau}}$ into~$\ch{H}$. Passing to character groups, we get an exact sequence
$$
0\rightarrow \Char(\ch H/\ch H^{\ch \tau}) \rightarrow \Char(\ch H/(\ch H^{\ch\tau})_0)\rightarrow \Char(\Schtau)\rightarrow 0.
$$
We may view the characters of $\ch H/\ch H^{\ch \tau}$ (resp. $\ch H/(\ch H^{\ch \tau})_0$) as (algebraic) characters of~$\ch{H}$ which vanish on~$\ch{H}^{\ch{\tau}}$ (resp. $(\ch H^{\ch \tau})_0$). Under the canonical isomorphism $X^*(\ch H)\simeq X_*(H)$, the group of characters of~$\ch{H}$ which vanish on~$\ch{H}^{\ch{\tau}}$ is identified with $(1-\tau)X_*(H)$, and the group of characters which vanish on~$(\ch H^{\ch\tau})_0$  is identified with~$X_\ast(H)^{-\tau}$ (see~\cite[Lemma 9.5]{ABV}). Therefore the previous exact sequence becomes
\begin{equation}\label{exact_sequence_2}
0\rightarrow (1-\tau)X_*(H)\rightarrow X_*(H)^{-\tau}\rightarrow \Char(\Schtau)\rightarrow 0
\end{equation}
 as in~\cite[Proposition 9.6]{ABV}. 
Now, viewing $X_*(H)$ as a subset of the Lie algebra of $H$, the map $\mu\mapsto \exp(\mu/2)$
is a surjective homomorphism from $X_*(H)^{-\tau}$ to $H^{-\tau}/A_\tau=\U_\tau$, with kernel $(1-\tau)X_*(H)$ (see \cite[Proposition 9.10]{ABV}):
$$
0\rightarrow (1-\tau)X_*(H)\rightarrow X_*(H)^{-\tau}\rightarrow  \U_\tau\rightarrow 0.
$$
Combined with the previous exact sequence this yields an isomorphism
\begin{equation}\label{iso_U_Pi} \Char(\Schtau)\overset{\sim}{\longrightarrow} \U_\tau. \end{equation}

We modify this slightly in order to incorporate coverings; see \cite[Propositions 9.8 and 9.10]{ABV}. 
The group $X^*(\ch H^{\mathrm{alg}})$ 
 of rational characters of~$\ch{H}^{\mathrm{alg}}$
can be identified with $X_\ast(H)_\Q=X_\ast(H) \otimes_{\mathbb{Z}} \mathbb{Q}$, see \cite[(9.7)]{ABV}.
Consider $\ch{H}^{\alg, \ch{\tau}}$ in place of~$\ch{H}$ in~\eqref{exact_sequence_1}, pass to character groups and identify characters of $\ch{H}^{\alg}/\ch{H}^{\alg,\ch \tau}$ (resp. $\ch{H}^\alg/(\ch H^{\alg,\ch{\tau}})_0$) with (rational) characters of~$\ch{H}^\alg$ which vanish on~$\ch{H}^{\alg,\ch{\tau}}$ (resp. $(\ch H^{\alg, \ch \tau})_0$). Then the exact sequence~\eqref{exact_sequence_2} is replaced by
$$
0\rightarrow (1-\tau)X_*(H)\rightarrow X_*(H)^{-\tau}_\Q\rightarrow \Char(\Schtautilde)\rightarrow 0.
$$
As in the previous case the map $\mu\mapsto \exp(\mu/2)$ induces an exact sequence
\begin{equation}\label{compatibility}
0\rightarrow (1-\tau)X_*(H)\rightarrow X_*(H)^{-\tau}_\Q\rightarrow \Ut_\tau\rightarrow 0
\end{equation}
and an isomorphism 
\begin{equation}\label{iso_Ut_Pit}
\Char(\Schtautilde)\overset{\sim}{\longrightarrow} \Ut_\tau.
\end{equation}
Furthermore the exact sequence~\eqref{compatibility} is compatible with the inclusions of $X_*(H)^{-\tau}$ into $X_*(H)^{-\tau}_\Q$ and of $\U_\tau$ into $\Ut_\tau$. Therefore we can  sum up this discussion as follows:
 
\begin{lemm} \label{isoms_U} The maps $\Char(\Schtau)\to \U_\tau$ and $\Char(\Schtautilde)\to\Ut_\tau$ in~\eqref{iso_U_Pi} and~\eqref{iso_Ut_Pit}  are isomorphisms, and fit into a commutative diagram
\begin{equation}
\xymatrix{
\U_\tau\ar[d]_\simeq\ar@{^{(}->}[r]&\Ut_\tau\ar[d]^\simeq\\
\Char(\Schtau)\ar@{^{(}->}[r]&\Char(\Schtautilde)
}
\end{equation}
where the lower horizontal arrow is dual to the surjection $\widetilde{\mathbb{S}}_{\ch{\tau}} \to {\mathbb{S}}_{\ch{\tau}}$, and the top arrow is the inclusion of $\U_{\tau}$ into $\Ut_\tau$.
\end{lemm}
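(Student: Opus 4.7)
The plan is to observe that the two constructions preceding the lemma already do most of the work, and to organize the remaining verifications.

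First, I would establish the two isomorphism claims separately. Both exact sequences \eqref{exact_sequence_2} and its rational analogue are of the form
\[
0 \to (1-\tau) X_*(H) \to A \to \Char(\bullet) \to 0,
\]
with $A = X_*(H)^{-\tau}$ in one case and $A = X_*(H)^{-\tau}_{\Q}$ in the other. The key input I would cite is \cite[Proposition~9.10]{ABV}, which tells me that the exponential map $\mu \mapsto \exp(\mu/2)$ has kernel exactly $(1-\tau) X_*(H)$ on both $X_*(H)^{-\tau}$ (with image $\U_\tau$) and $X_*(H)^{-\tau}_\Q$ (with image $\Ut_\tau$). Combining the resulting two short exact sequences in each case yields the two vertical isomorphisms. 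The only thing worth checking carefully is that the ``$\Q$-version'' of the exact sequence on the dual side really terminates at $\Char(\Schtautilde)$: here I would use the identification $X^*(\ch H^{\alg}) = X_*(H)_\Q$ and the definition of $\Schtautilde$ as the component group of $\ch H^{\alg,\ch\tau}$, so that the same chase as in \cite[Proposition 9.6]{ABV} produces \eqref{compatibility} verbatim with $\Z$-coefficients replaced by $\Q$-coefficients.

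Second, for the commutativity of the diagram, I would track a class through both routes. An element $\bar\mu \in \U_\tau$ is represented by some $\mu \in X_*(H)^{-\tau}$. The top horizontal arrow sends $\bar\mu$ to the same class viewed in $\Ut_\tau = X_*(H)^{-\tau}_\Q / (1-\tau) X_*(H)$. On the other hand, under the isomorphism \eqref{iso_U_Pi}, $\bar\mu$ corresponds to the character of $\Schtau$ pulled back from the algebraic character $\mu \in X_*(H) = X^*(\ch H)$ restricted to $\ch H^{\ch\tau}$; and under \eqref{iso_Ut_Pit}, the image $\bar\mu \in \Ut_\tau$ corresponds to the genuine character of $\Schtautilde$ defined by the same $\mu$, now viewed as an element of $X^*(\ch H^{\alg}) = X_*(H)_\Q$, restricted to $\ch H^{\alg,\ch\tau}$. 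Since the surjection $\Schtautilde \twoheadrightarrow \Schtau$ is induced by $\ch H^{\alg,\ch\tau} \to \ch H^{\ch\tau}$ and the dual inclusion $\Char(\Schtau) \hookrightarrow \Char(\Schtautilde)$ is precisely pullback along this surjection, the two resulting characters of $\Schtautilde$ agree. This proves commutativity.

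The main point that requires care, and what I would flag as the principal obstacle, is the naturality of the identifications $X^*(\ch H) = X_*(H)$ and $X^*(\ch H^{\alg}) = X_*(H)_\Q$ with respect to the fixed-point subgroups and their identity components. Specifically, one must verify that under $X^*(\ch H^{\alg}) \cong X_*(H)_\Q$, the rational characters vanishing on $\ch H^{\alg,\ch\tau}$ correspond to $(1-\tau) X_*(H)$ (not merely $(1-\tau) X_*(H)_\Q$), and those vanishing on $(\ch H^{\alg,\ch\tau})_0$ correspond to $X_*(H)^{-\tau}_\Q$. Both identifications follow by unwinding how the covering $\ch G^{\alg} \to \ch G$ interacts with fixed points of a semisimple automorphism, and reduce to elementary statements about divisible abelian groups once the relevant identifications from \cite[\S 9]{ABV} are in place.
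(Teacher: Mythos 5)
Your proposal is correct and follows essentially the same route as the paper: the two isomorphisms come from comparing the character-group exact sequences with the $\exp(\mu/2)$ exact sequences (the paper cites \cite[Propositions 9.6, 9.8 and 9.10]{ABV} for exactly the points you flag, including that the kernel on $X_*(H)^{-\tau}_{\Q}$ is the \emph{integral} lattice $(1-\tau)X_*(H)$), and the commutativity is the compatibility of these sequences with the inclusions $X_*(H)^{-\tau}\hookrightarrow X_*(H)^{-\tau}_{\Q}$ and $\U_\tau\hookrightarrow\Ut_\tau$, which your element chase makes explicit.
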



\subsubsection{}\label{sec:action_F_fiber}
We can now relate $\Pi(\Schtautilde)$ to the \texttt{KGB} space: we describe a simply transitive action of~$\Char(\Schtautilde)$ on~$\X_\tau$.
By Lemma~\ref{isoms_U} this amounts to describing a simply transitive action of~$\Ut_\tau$ on~$\X_\tau$. We follow \cite[Proposition~11.2]{Algorithms}.

Recall from \S \ref{sec:def_KGB_space} that $\mathcal{X}$ is a quotient of
$\widetilde{\mathcal{X}} = \left\{ \, \xi \in \mathrm{Norm}_{G^\Gamma}(H) \ : \ \xi^2 \in Z(G)\, \right\}$,
and the projection $p\colon\widetilde{\mathcal{X}} \to \mathcal{X}$
sends an element $\xi \in  \widetilde{\mathcal{X}}$ to its $H$-conjugacy class.
Set $\widetilde{\mathcal{X}}_{\tau}=p^{-1}(\mathcal{X}_{\tau})$.
Then if we fix an element $\xi \in \widetilde{\mathcal{X}}_{\tau}$,
we have  $\widetilde{\mathcal{X}}_{\tau} = \{ \, h\xi \ : \ h \in H'_{-\tau}\, \}$.

Thus the group $H^{-\tau}_Z$ acts on $\widetilde{\mathcal{X}}_{\tau}$
by multiplication on the left,
and clearly that action is simply transitive.
It descends to an action of $H^{-\tau}_Z$ on the fiber $\mathcal{X}_\tau$,
which is still transitive but no longer free.
For $h \in H$, we have $h\xi h^{-1}=h\tau(h^{-1})\xi$,
and so the stabilizer of $p(\xi)$ is $A_{\tau}$. Since $H_Z^{-\tau}/A_\tau=U_\tau$ we deduce:
\begin{lemm}\label{action_toruscomp}
The action of $H^{-\tau}_Z$ on $\mathcal{X}_\tau$
induces a simply transitive action of $\Ut_\tau$ on $\mathcal{X}_\tau$.
 
\end{lemm}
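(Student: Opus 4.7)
The plan is to transport the evident simply transitive left multiplication action of $H^{-\tau}_Z$ on $\widetilde{\mathcal X}_\tau$ down to $\mathcal X_\tau$, identify the kernel of this descended action, and read off the result.

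First I would verify that the left multiplication action of $H^{-\tau}_Z$ on $\widetilde{\mathcal X}_\tau$ commutes with the conjugation action of $H$, so that it passes to the quotient $\mathcal X_\tau = \widetilde{\mathcal X}_\tau/H$. For $g \in H^{-\tau}_Z$, $\xi' \in \widetilde{\mathcal X}_\tau$ and $h \in H$, one has $h\xi'h^{-1} = h\tau(h^{-1})\xi'$ since conjugation by $\xi'$ restricts to $\tau$ on $H$. Because $H$ is abelian, $g\cdot(h\xi'h^{-1}) = gh\tau(h^{-1})\xi' = h\tau(h^{-1})(g\xi') = h(g\xi')h^{-1}$, which is the desired compatibility. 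Since the left multiplication is simply transitive upstairs, the descended action of $H^{-\tau}_Z$ on $\mathcal X_\tau$ is transitive.

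Next I would compute the stabilizer of the base point $p(\xi)$. Given $g \in H^{-\tau}_Z$, the element $g$ fixes $p(\xi)$ if and only if $g\xi$ is $H$-conjugate to $\xi$, i.e.\ $g\xi = h\xi h^{-1} = h\tau(h^{-1})\xi$ for some $h \in H$. Using injectivity of left multiplication in $G^\Gamma$, this rewrites as $g = h\tau(h^{-1})$ for some $h \in H$, i.e.\ $g \in A_\tau$. By transitivity the stabilizer of any point of $\mathcal X_\tau$ is also $A_\tau$, so the action factors through $H^{-\tau}_Z/A_\tau = \Ut_\tau$ and is simply transitive there.

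I do not expect a serious obstacle: everything reduces to the identity $h\xi h^{-1} = h\tau(h^{-1})\xi$ together with the definition $A_\tau = \{h\tau(h^{-1}) : h \in H\}$ recalled in \S\ref{sec:isomorphism_torusgroups}. The one point requiring a little care is checking that the left action of $H^{-\tau}_Z$ indeed preserves $\widetilde{\mathcal X}_\tau$ (i.e., that $g\xi$ is still of the form with $(g\xi)^2 \in Z(G)$ and $\int(g\xi)|_H = \tau$), but this is immediate: $g\xi \in \mathrm{Norm}_{G^\Gamma\setminus G}(H)$ since $g \in H$; $(g\xi)^2 = g\tau(g)\xi^2 \in Z(G)$ by definition of $H^{-\tau}_Z$ and the fact that $\xi^2\in Z(G)$; and $\mathrm{int}(g\xi)|_H = \mathrm{int}(\xi)|_H = \tau$ since $g\in H$ and $H$ is abelian.
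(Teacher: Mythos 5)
Your argument is correct and is essentially identical to the paper's: both descend the simply transitive left-multiplication action of $H^{-\tau}_Z$ on $\widetilde{\mathcal X}_\tau$ to the quotient $\mathcal X_\tau$ and compute the stabilizer of $p(\xi)$ to be $A_\tau$ via the identity $h\xi h^{-1}=h\tau(h^{-1})\xi$. If anything you are slightly more careful than the paper in checking that $H^{-\tau}_Z\xi\subset\widetilde{\mathcal X}_\tau$ (though, like the paper, you take for granted the reverse containment $\widetilde{\mathcal X}_\tau\subset H^{-\tau}_Z\xi$, which is what makes the upstairs action transitive).
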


\subsection{The canonical basepoint in  $\mathcal{X}_{\tau}$}
\label{sec:basepoint}
\label{sec:twisted_involutions}
\label{sec:def_tits}
\label{def_representative_basepoint}

Choosing a basepoint in $\mathcal{X}_{\tau}$ will convert the action of Lemma \ref{action_toruscomp}
into a bijection between $\Char(\Schtautilde)$ and $\X_\tau$.
Therefore we are looking for a privileged choice of basepoint $x_{b,\tau} \in \mathcal{X}_{\tau}$
in each fiber $\X_\tau$. 
It  corresponds to the trivial character of $\widetilde{\mathbb{S}}_{\ch{\tau}}$, and therefore
determines a special element of the $L$-packet $\Pi(\phi)$. See the end of 
Section \ref{sec:def_complete_parameters}, as well as the \href{sec:appendix}{Appendix} for the relationship with Whittaker models.

Let us define $x_{b,\tau}$ for $\tau \in \mathcal{I}_W$.
We begin with a special case.
Recall we are working with the extended group $G^\Gamma = \langle G, \xi_\gamma\rangle$,
where $\xi_{\gamma}$ satisfies $\xi_{\gamma}^2 = 1$
and $\mathrm{int}(\xi_\gamma)$ acts on $G$ by the distinguished automorphism $\gamma$
(see \S\ref{extended_group}).
Since $\gamma$ preserves $H$, the element $\xi_{\gamma}$ is a strong involution of $G$;
we denote by $\mathcal{X}_{\tau_{\gamma}}$ the corresponding fiber,
and call it \emph{distinguished}.
Define the basepoint $x_{b, \tau_\gamma}$ in that fiber to be the image in $\X$ of
\[
\xi_{b, \tau_\gamma} = \exp(i\pi\ch{\rho})\,{}\xi_{\gamma}.
\]
This satisfies $\xi_{b, \tau_\gamma}^2 =\exp({2i\pi\ch{\rho}})= z_\ast$,
so the strong real form corresponding to $x_{b, \tau_\gamma}$ is pure;
we call it the distinguished strong real form of $G$ (attached to $\gamma$).

Let us mention that the element $x_{b, \tau_\gamma}$ is `large', i.e.
the simple imaginary roots for $\tau_{\gamma}$ are all noncompact with respect to $x_{b, \tau_\gamma}$.
See \cite[\S 12]{Algorithms} and Remark \ref{remark:noncompact_singular} below.
In particular, the corresponding real form of $G$ is quasisplit.
Furthermore, $\mathrm{int}(\xi_\gamma)$ acts on $\mathrm{Norm}_G(H)$,
and descends to an automorphism $\gamma^W$ of $W$.

To define the basepoints in the other fibers,
we need to interpret the involutions $\tau$ of~$H$ in terms of the Weyl group.
Thus fix $\tau \in \mathcal{I}_W$.
Given a strong involution $\xi$ representing an element of $\mathcal{X}_\tau$,
we can consider the element $\xi \xi_\gamma^{-1}$ of $G$;
it normalizes $H$, and we let $w_{\tau}$ be its image in the Weyl group.
This is independent of the choice of element in~$\mathcal{X}_{\tau}$ and representative,
and satisfies $w_\tau\,{}\gamma^W(w_\tau)=1$.
We say $w_{\tau}$ is a twisted involution in $W$.
The map $\tau \mapsto w_{\tau}$ sets up a bijection
between $\mathcal{I}_W$ and the set of twisted involutions in $W$:
given a twisted involution~$w$,
the corresponding involution of $H$ is $w \circ \mathrm{int}(\xi_\gamma)$.
The Weyl group acts on $\I_W$ by conjugation, and on twisted involutions
by twisted conjugation ($w\in W$ acts by $y\mapsto wy\gamma^w(w\inv)$); the bijection $\tau \mapsto w_{\tau}$ intertwines these two actions.
The involution $\tau_\gamma$ of $H$ is mapped to the identity.

For our last ingredient, we use the \emph{Tits group}  to choose a canonical set-theoretic splitting
of the map $p:\Norm_G(H)\rightarrow W$. 
This depends on our choice of pinning $\P$ of $G$.
See \cite[\S 15]{Algorithms}. Here is a short summary of the construction.

For  $\alpha$ a simple root, with corresponding root vector $X_\alpha$ from the pinning, 
there is a canonical homomorphism $\phi_\alpha \colon \mathrm{SL}(2, \mathbb{C}) \to G$
satisfying:
$\phi_\alpha(\text{diagonal matrices}) \subset H$
and
$d\phi_\alpha\left(\begin{smallmatrix} 0 & 1 \\ 0 & 0 \end{smallmatrix}\right) = X_\alpha$.
Then we set  $\sigma_{s_\alpha} = \phi_\alpha \left(\begin{smallmatrix} 0 & 1 \\ -1 & 0 \end{smallmatrix}\right)\in\Norm_G(H)$,
and also denote it by $\sigma_{\alpha}$.

When $w \in W$ is arbitrary,  with reduced expression $w=s_{\alpha_1} \dots s_{\alpha_r}$,
we set $\sigma_w = \sigma_{\alpha_1} \dots \sigma_{\alpha_r}$.
Then $\sigma_w$ is independent of the reduced expression, and the map $W\rightarrow \Norm(H):w\rightarrow \sigma_w$ 
satisfies $p(\sigma_w)=w$ for all $w\in W$. 
The Tits group for $(G, \mathcal{P})$ is the subgroup of $\mathrm{Norm}_{G}(H)$ generated by  the~$\sigma_{\alpha}$.

We can now define the basepoint in each fiber, following \cite[Section 3]{TwistedParameters}.
For $\tau$ in $\mathcal{I}_W$, consider the element $w_{\tau}$ of $W$, its representative $\sigma_{w_\tau}$ in the Tits group, and set
\begin{equation} \label{def_basepoint}
 \xi_{b,\tau}= \exp(i\pi\ch{\rho})\,{}\sigma_{w_\tau}\,{}\xi_{\gamma}.
 \end{equation}
This is a strong involution of $G$, satisfies $\xi_{b,\tau}^2 = z_\ast$, and is conjugate to $\xi_{b,\tau_\gamma}$.
See \cite[Proposition~3.2]{TwistedParameters} and the proof of Lemma~\href{l:Q}{A.4}.
Finally, we define the basepoint in $\mathcal{X}_\tau$ to be
\[
 x_{b,\tau} = \text{image of $\xi_{b, \tau}$ in the fiber $\mathcal{X}_\tau$}.
 \]
Thus the various basepoints $x_{b,\tau}$, $\tau \in \mathcal{I}_W$, are all $G$-conjugate:
 they all define the same strong real form of $G$,
namely the distinguished quasisplit strong real form of $G$ attached to $\gamma$.


Using this choice of basepoints we deduce: 

\begin{prop} \label{prop:bij_char_KGB_4}
Let~$\tau$ be an involution of~$H$. Using  the free action of~$\Char(\Schtautilde)$ on~$\mathcal{X}_{\tau}$ 
and the basepoint for $\mathcal{X}_{\tau}$ defined in the previous section, we obtain a bijection
\begin{equation}
\label{e:D}
\Char(\Schtautilde)\leftrightarrow \X_\tau.
\end{equation}
This bijection maps the trivial character of $\Schtautilde$ to the canonical basepoint $x_{b,\tau}$ in $\mathcal{X}_{\tau}$.
\end{prop}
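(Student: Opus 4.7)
The plan is to assemble three pieces of machinery already prepared in the preceding subsections, with no real obstacles beyond bookkeeping. The key ingredients are: (i) Lemma \ref{isoms_U}, giving a canonical isomorphism $\Char(\Schtautilde) \overset{\sim}{\longrightarrow} \Ut_\tau$ compatible with the cover $\Schtautilde \twoheadrightarrow \Schtau$ on one side and the inclusion $\U_\tau \hookrightarrow \Ut_\tau$ on the other; (ii) Lemma \ref{action_toruscomp}, supplying a simply transitive action of $\Ut_\tau$ on the fiber $\mathcal{X}_\tau$, descended from left multiplication of $H_Z^{-\tau}$ on $\widetilde{\mathcal{X}}_\tau$; and (iii) the canonical basepoint $x_{b,\tau} \in \mathcal{X}_\tau$ constructed in \S\ref{sec:basepoint} via the Tits group representative $\sigma_{w_\tau}$.

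The first step is to transport the $\Ut_\tau$-action of Lemma \ref{action_toruscomp} along the isomorphism of Lemma \ref{isoms_U}, obtaining a simply transitive action of $\Char(\Schtautilde)$ on $\mathcal{X}_\tau$. The bijection \eqref{e:D} is then obtained by the standard device of fixing a basepoint in a torsor: send $\chi \in \Char(\Schtautilde)$ to $\chi \cdot x_{b,\tau}$. Freeness and transitivity of the action immediately yield both injectivity and surjectivity of this map.

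For the second claim — that the trivial character maps to $x_{b,\tau}$ — one observes that under the isomorphism of Lemma \ref{isoms_U} the trivial character of $\Schtautilde$ corresponds to the identity element of $\Ut_\tau$, which acts as the identity on $\mathcal{X}_\tau$ and in particular fixes the chosen basepoint. Hence the trivial character is sent to $x_{b,\tau}$ by construction.

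There is essentially no main obstacle; the proposition is a formal corollary of the results already in place. The care taken in the preceding subsections — particularly ensuring the commutativity of the diagram in Lemma \ref{isoms_U}, so that the character-group identification is compatible with passage to the algebraic cover $\chGalg \to \ch G$ — is precisely what lets the statement follow at once, with no additional computation.
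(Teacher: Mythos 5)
Your proof is correct and is essentially the paper's own argument: the paper derives the proposition directly ("Using this choice of basepoints we deduce") from the simply transitive $\Ut_\tau$-action of Lemma \ref{action_toruscomp}, the isomorphism $\Char(\Schtautilde)\simeq\Ut_\tau$ of Lemma \ref{isoms_U}, and the basepoint of \S\ref{sec:basepoint}, exactly as you assemble them. Nothing further is needed.
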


\subsection{Relationship between  $\widetilde{\mathbb{S}}_{\phi}$ and $\widetilde{\mathbb{S}}_{\ch{\tau}}$}\label{sec:S_phi_and_S_tau}

We now observe that $\Char(\widetilde{\mathbb{S}}_\phi)$ naturally embeds in~$\Char(\widetilde{\mathbb{S}}_{\ch{\tau}})\simeq \Ut_\tau$. The crucial fact is that the ``maximally $\ch{\tau}$-split'' condition in Definition~\ref{d:stdform} implies that~$\ch{H}$ meets every component of $\ch{G}_\phi = \mathrm{Cent}_{\ch{G}}(\phi(\mathbf{W}_{\mathbb{R}}))$: see \cite[Lemma 12.10]{ABV}. Since~$\mathbb{S}_\phi$ is the component group of $\ch{G}_\phi$
and  $\mathbb{S}_{\ch{\tau}}$ is the component group of $\ch{H}^{\ch{\tau}} \subset \ch{G}_\phi$,
 we get a \emph{surjection} $ \mathbb{S}_{\ch{\tau}} \to \mathbb{S}_{\phi}$. The same fact applied to coverings, as in \cite[(12.11)(e)]{ABV}, yields a canonical surjection
\begin{equation} \label{surj_compgroup}
p_{\phi} \colon \widetilde{\mathbb{S}}_{\ch{\tau}} \twoheadrightarrow \widetilde{\mathbb{S}}_{\phi}.
\end{equation}

By Pontryagin duality, we get a {canonical injection}
\begin{equation} \label{inj_toruscomp}
\beta_\phi\colon \Char(\widetilde{\mathbb{S}}_\phi)\hookrightarrow \Char(\widetilde{\mathbb{S}}_{\ch{\tau}}).
 \end{equation}
If the infinitesimal character for $\phi$ is regular,
then \eqref{surj_compgroup} and \eqref{inj_toruscomp} are bijections: see \cite[(12.4)(c)]{ABV}.
In general the image of \eqref{inj_toruscomp} is the set of characters of $\widetilde{\mathbb{S}}_{\ch{\tau}}$ that are $\phi$-\emph{final}
in the sense of \cite[Definition 12.8]{ABV}.
This has the following meaning.

Suppose $\alpha$ is a~$\ch{\tau}$-real root of~$\ch{H}$ in~$\ch{G}$.
It determines a distinguished element~\mbox{$\ch{\tilde{m}}_\alpha =\alpha^\vee(-1)$},
of order  $1$ or $2$ in~$(\ch{H})^{\mathrm{alg}, \ch{\tau}}$, as follows. Consider the root subgroup morphism~\mbox{$\mathrm{SL}(2,\C)\to \ch{G}$} attached to $\alpha$; since $\mathrm{SL}(2,\C)$ is simply connected, it lifts to a continuous morphism~\mbox{$\mathrm{SL}(2,\C)\to~\chGalg$}. Set~$\ch{\tilde{m}}_\alpha =\alpha^\vee(-I_2)$. This is an element
of order $1$ or $2$ in~$(\ch{H})^{\mathrm{alg}, \ch{\tau}}$, and we let  $\ch{\overline{m}}_\alpha$ be the corresponding element of $\widetilde{\mathbb{S}}_{\ch{\tau}}$. See \cite[pp. 141--142]{ABV}. 

The kernel of~\eqref{surj_compgroup} is generated by the elements $\ch{\overline{m}_\alpha}$
for those roots $\alpha$ which are~$\ch{\tau}$-real  and $\phi$-singular
(i.e. orthogonal to the infinitesimal character representative~$\lambda$).
Then  $\chi \in \Char(\widetilde{\mathbb{S}}_{\ch{\tau}})$ is called \emph{$\phi$-final}
if $\chi(\ch{\overline{m}_\alpha})=1$ whenever~$\alpha$ is a~$\ch{\tau}$-real and $\phi$-singular root.
We denote by $\Char_{\phi,\,{}\mathrm{fin}} (\widetilde{\mathbb{S}}_{\ch{\tau}})$
the $\phi$-final part of~$\Char(\widetilde{\mathbb{S}}_{\ch{\tau}})$, and sum~up:

\begin{lemm} \label{lem:action_charcompgroup}
Restricting the action of Lemma \ref{action_toruscomp}
to the image $\Char_{\phi,\,{}\mathrm{fin}} (\widetilde{\mathbb{S}}_{\ch{\tau}})$ of \eqref{inj_toruscomp},
we obtain a free action of $\Char(\widetilde{\mathbb{S}}_\phi)$ on $\mathcal{X}_{\tau}$.
\end{lemm}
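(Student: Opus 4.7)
The lemma is essentially a formal consequence of what has already been set up, so my plan is short. The strategy is to observe that the conclusion follows automatically from two already-established facts: (i) Lemma~\ref{action_toruscomp} gives a \emph{simply transitive}, hence in particular \emph{free}, action of the whole group $\Char(\widetilde{\mathbb{S}}_{\ch{\tau}})$ on $\mathcal{X}_{\tau}$; and (ii) the preceding discussion identifies $\Char(\widetilde{\mathbb{S}}_{\phi})$, via the injection $\beta_\phi$ of~\eqref{inj_toruscomp}, with the subgroup $\Char_{\phi,\,\mathrm{fin}}(\widetilde{\mathbb{S}}_{\ch{\tau}})$ of $\phi$-final characters of $\widetilde{\mathbb{S}}_{\ch{\tau}}$.

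Concretely, my plan is to proceed in three lines. First, invoke Lemma~\ref{action_toruscomp} to get the simply transitive action of the ambient group $\Char(\widetilde{\mathbb{S}}_{\ch{\tau}})$. Second, use the general principle that if a group $A$ acts freely on a set and $B \subseteq A$ is a subgroup, then the restricted action of $B$ is again free; applied to $A = \Char(\widetilde{\mathbb{S}}_{\ch{\tau}})$ and $B = \Char_{\phi,\,\mathrm{fin}}(\widetilde{\mathbb{S}}_{\ch{\tau}})$, this immediately yields a free action of $\Char_{\phi,\,\mathrm{fin}}(\widetilde{\mathbb{S}}_{\ch{\tau}})$ on $\mathcal{X}_\tau$. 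Third, transport this action along the isomorphism $\beta_\phi\colon \Char(\widetilde{\mathbb{S}}_{\phi}) \xrightarrow{\sim} \Char_{\phi,\,\mathrm{fin}}(\widetilde{\mathbb{S}}_{\ch{\tau}})$ to obtain a free action of $\Char(\widetilde{\mathbb{S}}_{\phi})$ on $\mathcal{X}_\tau$.

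There is no real obstacle here; the only point one needs to be comfortable with is that the image of $\beta_\phi$ is indeed the $\phi$-final part, which is stated just before the lemma and established via \cite[Definition 12.8]{ABV} and the surjection \eqref{surj_compgroup}. Note that the lemma deliberately asserts only freeness, not simple transitivity: in general the action of $\Char(\widetilde{\mathbb{S}}_{\phi})$ on the whole fiber $\mathcal{X}_\tau$ is not transitive, because the orbit through the basepoint $x_{b,\tau}$ will only exhaust the \texttt{KGB} elements corresponding to the representations in the single $L$-packet $\Pi(\phi)$. That transitivity on an appropriate subset of $\mathcal{X}_\tau$ is the content that will be used later when combining the bijection \eqref{e:D} with the surjection \eqref{surj_compgroup} to extract the actual parametrization of $\Pi(\phi)$.
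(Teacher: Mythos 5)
Your proposal is correct and matches the paper's treatment: the paper states this lemma as an immediate summary of the preceding discussion, namely the simply transitive action of $\Char(\widetilde{\mathbb{S}}_{\ch{\tau}})\simeq \Ut_\tau$ from Lemma~\ref{action_toruscomp}, the identification of the image of $\beta_\phi$ with the $\phi$-final characters, and the trivial fact that a subgroup of a freely acting group acts freely. Your closing remark about why only freeness (not transitivity) is asserted is accurate and consistent with how the lemma is used later.
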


Combining this with the choice of basepoint in the previous subsection, we obtain:

\begin{prop} \label{prop:bij_char_KGB_param}
Let $\phi\colon \mathbf{W}_{\mathbb{R}} \to\lgr{G}$ be an $L$-homomorphism in standard form,
and let $\tau$ be the involution $\tau(\phi)$ of~$H$ \textup{(}Definition~\ref{d:stdform}\textup{)}. The injection $\Char(\widetilde{\mathbb{S}}_\phi)\hookrightarrow \Char(\widetilde{\mathbb{S}}_{\ch{\tau}})$ from \eqref{inj_toruscomp} and the bijection $\Char(\widetilde{\mathbb{S}}_{\ch{\tau}}) \leftrightarrow \X_\tau$ from \eqref{e:D}  yield  a natural injection
\begin{equation}
\label{e:E}
\E_\phi: \Char(\tSphi)\hookrightarrow \X_\tau.
\end{equation}
The map $\E_\phi$ sends the trivial character of $\widetilde{\mathbb{S}}_\phi$ to the canonical basepoint $x_{b,\tau}$ in $\mathcal{X}_{\tau}$,
and sends characters of $\mathbb{S}_\phi$ to elements of  $\mathcal{X}_\tau(z_\star)$. \end{prop}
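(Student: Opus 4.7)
The plan is to define $\E_\phi$ simply as the composition of the injection \eqref{inj_toruscomp} with the bijection \eqref{e:D}, and then to unwind the constructions of Sections \ref{sec:S_tau} and \ref{sec:basepoint} to verify the two asserted properties. Both are compatibility statements between duality operations on component groups and the concrete torus action on $\mathcal{X}_\tau$, so nothing beyond what has already been assembled is needed.

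First I would handle the basepoint claim. By construction, the bijection \eqref{e:D} of Proposition~\ref{prop:bij_char_KGB_4} sends the trivial character of $\widetilde{\mathbb{S}}_{\ch\tau}$ to $x_{b,\tau}$, since the simply transitive action of $\widetilde{U}_\tau \simeq \Char(\widetilde{\mathbb{S}}_{\ch\tau})$ from Lemma~\ref{action_toruscomp} is normalized to identify the trivial element with the basepoint. The map \eqref{inj_toruscomp} is by definition the Pontryagin dual of the surjection $p_\phi\colon \widetilde{\mathbb{S}}_{\ch\tau} \twoheadrightarrow \widetilde{\mathbb{S}}_\phi$, so it sends the trivial character of $\widetilde{\mathbb{S}}_\phi$ to the trivial character of $\widetilde{\mathbb{S}}_{\ch\tau}$. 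Composing these two facts gives $\E_\phi(\mathbf{1}) = x_{b,\tau}$.

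Next I would verify that characters of $\mathbb{S}_\phi$ land in $\mathcal{X}_\tau(z_\ast)$. The commutative diagram in Lemma~\ref{isoms_U}, together with the fact that a character of $\widetilde{\mathbb{S}}_\phi$ comes from $\mathbb{S}_\phi$ iff its image in $\Char(\widetilde{\mathbb{S}}_{\ch\tau})$ comes from $\Char(\mathbb{S}_{\ch\tau})$, identifies such characters with elements of $U_\tau = H^{-\tau}/A_\tau \subset \widetilde{U}_\tau$. By Lemma~\ref{action_toruscomp} the corresponding \texttt{KGB} element is $p(h\,\xi_{b,\tau})$ for any lift $h \in H^{-\tau}$ of the chosen class. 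A direct computation gives
\[
(h\,\xi_{b,\tau})^2 \;=\; h\,\tau(h)\,\xi_{b,\tau}^2 \;=\; h\,h^{-1}\,z_\ast \;=\; z_\ast,
\]
where the first equality uses $\mathrm{int}(\xi_{b,\tau})|_H = \tau$, the second uses $h \in H^{-\tau}$ and $\xi_{b,\tau}^2 = z_\ast$ from \S\ref{sec:basepoint}. Hence $\E_\phi(\chi) \in \mathcal{X}_\tau(z_\ast)$.

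The main subtlety I expect is bookkeeping: one must check that the inclusion $\Char(\mathbb{S}_{\ch\tau}) \hookrightarrow \Char(\widetilde{\mathbb{S}}_{\ch\tau})$ really is compatible, under the isomorphisms \eqref{iso_U_Pi} and \eqref{iso_Ut_Pit}, with the inclusion $H^{-\tau}/A_\tau \hookrightarrow H^{-\tau}_Z/A_\tau$; but this is precisely the commutativity in Lemma~\ref{isoms_U}. Once that is in place the square computation above is essentially the only substantive step, and the remainder of the argument is formal.
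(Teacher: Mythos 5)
Your proposal is correct and matches the paper's intent: the paper states this proposition as an immediate consequence of Lemma \ref{lem:action_charcompgroup} and the basepoint construction of \S\ref{sec:basepoint}, giving no further argument. Your explicit verifications — that duality sends trivial characters to trivial characters, and the computation $(h\,\xi_{b,\tau})^2 = h\,\tau(h)\,z_\ast = z_\ast$ for $h \in H^{-\tau}$ — correctly fill in the routine details the paper leaves implicit.
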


\begin{rema} \label{remark:noncompact_singular}
We record for later use the following property of $\phi$-singular roots with respect to the image of~\eqref{e:E},
which transcribes  \cite[Proposition 13.12(c)]{ABV}.
Suppose $\alpha$ is a $\tau$-imaginary simple root. Let 
\begin{equation}
\label{e:X[alpha]}
\X_\tau[\alpha]=\{x\in \X_\tau\mid \alpha\text{ is $x$-noncompact}\}.
\end{equation}
Suppose $\chi\in \Char(\tSphi)$, $x=\E_\phi(\chi)\in \X_\tau$, and $\alpha$ is $\phi$-singular. Then $x\in \X_\tau[\alpha]$.
 \end{rema}

\subsection{Example: $\mathrm{SL}(2,\mathbb{R})$}\label{sec:example_SL2}

Let $G=SL(2,\C)$. Then $\ch G=PSL(2,\C)\simeq SO(3,\C)$. We choose to write this group as
$$
SO(3,\C)=\{g\in \GL(2,\C)\mid gJg^{t}=J\}, \quad \text{where $J=\left(\begin{smallmatrix}0&1&0\\1&0&0\\0&0&1\end{smallmatrix}\right)$. }
$$
We choose a Cartan subgroup to be  $\ch H=\diag\{(z,\frac{1}{z},1)\}\simeq \C^\times$.
There is only one inner class of real forms of $G$, containing $\SL(2,\R)$ and $\SU(2)$.
Then $\LG =\ch G\times (\Z/2\Z)$, and we can ignore the extension. 

Consider the parameter $\phi:W_\R\rightarrow \LG$ given by 
$$
\begin{aligned}
\phi(z)&=\diag(|z|^\nu,|z|^{-\nu},1)\quad (\nu\in \C),\\
\phi(j)&=\diag(\epsilon,\epsilon,1)\quad (\epsilon=\pm 1).
\end{aligned}
$$
Let $\ch\tau=\int(\phi(j))$.

First suppose $\nu\ne 0$. Then $\phi$ is in standard form: we have $\ch{L}_\phi=\Cent_{\ch G}(\phi(\C^\times))=\ch H$, so~$\ch{H}$ is (obviously) maximally split in $\ch L$ with respect to $\int(\phi(j))$.
Then $\ch\tau$~acts trivially on $\ch H$, i.e.~$\ch H$ is $\ch\tau$-compact,
so by duality $H$ is $\tau$-split, i.e. $H(\R,\tau)\simeq \R^\times$. In this case $\Cent_{\ch G}(\phi)=\ch{H}$, and $\Sphi=1$.

The $L$-packet for~$\mathrm{SL}(2,\mathbb{R})$ attached to~$\phi$ will
consist of a single principal series representation with infinitesimal
character $\nu$, which is  spherical if and only if $\epsilon=1$.

The situation changes significantly when we take $\nu=0$, in which case  $\ch{L}_\phi=\ch{G}$. 
\begin{enumerate}[(a)]
\item  If $\phi(j)=I_3$ then $\phi$ is in standard form: the involution $\ch\tau$ is trivial, the corresponding real form of $\ch G$ is compact, and~$\ch{H}$ is maximally split in~$\ch{G}$ with respect to $\ch\tau$. In this case $\Cent_{\ch G}(\phi)=\ch G$, and $\Sphi=1$ once more.

The $L$-packet for~$\mathrm{SL}(2,\mathbb{R})$ attached to~$\phi$  consists of the induced representation $\Ind_B^G(1)$: the irreducible spherical tempered principal series.

\item On the other hand, suppose $\phi(j)=\diag(-1,-1,1)$,
  so $\ch\tau\in\Aut(\ch H)$ is non-trivial, 
and the corresponding real form of $\ch G$ is split. However~$\ch H$ is~$\ch\tau$-compact, i.e. 
not maximally split in $\ch G$. 

In this case $\phi$ is $\ch G$-conjugate to the parameter $\phi'$ defined by:
$$
\begin{aligned}
\phi'(z)\phi(z)&=I_3,\\
\phi'(j)&=\begin{pmatrix}0&1&0\\1&0&0\\0&0&-1\end{pmatrix}.
\end{aligned}
$$
This time $\ch H$ is split with respect to  $\ch\tau'=\int(\phi'(j))$, so $H(\R,\tau')$ is compact, and~$\phi'$ is in standard form. 

We have  $\Cent_{\ch G}(\phi)=\Cent_{\ch G}(\phi')=S(O(2)\times O(1))\simeq O(2)$, and $\Sphi=\Z/2\Z$.
On the other hand $\ch H^{\ch\tau}$ is connected, so $\Schtau$ is trivial: therefore the map
$\Schtau\rightarrow \Sphi$ defined just before~\eqref{surj_compgroup} is {\it not} surjective, and neither is~\eqref{surj_compgroup}. This is one reason we need to use $\phi'$ instead:
$(\ch H)^{\ch \tau'}=\pm 1$, and $\widetilde{\mathbb{S}}_{\ch{\tau'}} \rightarrow \widetilde{\mathbb{S}}_{\phi'}$ is surjective.

So, in this case we consider the induced representation $\Ind_B^G(\sgn)$. This is tempered, with infinitesimal character $0$, but not spherical, and is reducible: 
it is the direct sum of the two limits of discrete series. The~$L$-packet for~$\mathrm{SL}(2, \R)$ attached to $\phi$  consist of these two limits of discrete series, and switching from $\phi$ to $\phi'$ makes it possible to view them as attached to a compact Cartan subgroup.

\end{enumerate}

\subsection{Cayley transforms, cross actions and characters of component groups} 
\label{sec:cayley_4}
 \label{sec:def_cross}

Using the dictionary in Proposition~\ref{prop:bij_char_KGB_4}, any natural operation on the \texttt{KGB} space can be interpreted in terms of component groups on the dual side. In this section, we spell out such an interpretation in the case of Cayley transforms, which are a key ingredient in  the computation of lowest $K$-types.  The material in this section is not used in the description of the Langlands correspondence in Section~\ref{sec:local_langlands}, but it is important for our proof of Theorem~\ref{main_theorem_rgroups} in Section~\ref{sec:r_groups}.


\subsubsection{Cayley transforms and cross actions in the \texttt{KGB} space}\label{sec:intro_cayley_cross} 

 Given an involution $\tau$ of $H$, recall
 $\mathcal{X}_\tau[\alpha] = \{ x \in \mathcal{X}_\tau \ : \ \alpha \text{ is $x$-noncompact}\}$, see~\eqref{e:Xtau}.
 In Section~\ref{s:crosscayleykgb} we defined the imaginary Cayley transform
\begin{equation} 
c^{\alpha}\colon \mathcal{X}_\tau[\alpha] \to \mathcal{X}_{s_\alpha \tau};
\end{equation}
it is surjective, and at most two-to-one. 

In \S\ref{sec:def_cross_action} we also discussed the cross action of the Weyl group $W=W(G,H)$ on $\X$. Given a $\tau$-complex root $\alpha$, the cross action of the reflection $s_\alpha \in W$ gives rise to a bijection 
\begin{equation} 
(s_\alpha \times~\,{} ) \colon \mathcal{X}_\tau \to \mathcal{X}_{s_\alpha \tau s_\alpha}.
\end{equation}

These operations on \texttt{KGB} elements  are key to the computation of lowest $K$-types in terms of \texttt{atlas} parameters.
First we point out that they preserve the basepoints $x_{b, \tau}$.

\begin{lemm} \label{basepoint}
Let $\tau$ be an involution of $H$.
Suppose $\alpha$ is a simple root. If $\alpha$ is $\tau$-imaginary then $c^\alpha(x_{b,\tau})=x_{b,s_\alpha\tau}$. 
If $\alpha$ is $\tau$-complex then
$s_\alpha\times x_{b,\tau}=x_{b,s_{s_\alpha\tau s_\alpha}}$.
\end{lemm}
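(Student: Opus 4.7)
The plan is to verify both statements by direct computation with the explicit representative
\[
\xi_{b,\tau} = \exp(i\pi\ch{\rho})\,\sigma_{w_\tau}\,\xi_\gamma\in\Xt,
\]
using three identities in the Tits group: $\sigma_\alpha^2=\exp(i\pi\ch\alpha)$, $\sigma_\alpha\,h\,\sigma_\alpha\inv = s_\alpha(h)$ for $h\in H$, and $\xi_\gamma\,\sigma_\alpha\,\xi_\gamma\inv = \sigma_{\gamma(\alpha)}$ (the last because $\xi_\gamma$ preserves the pinning). Throughout I use the bijection $\tau\leftrightarrow w_\tau$ recalled in \S\ref{sec:twisted_involutions}, where $\tau = w_\tau\circ\int(\xi_\gamma)$.

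For the imaginary case, $\tau(\alpha) = \alpha$ translates to $w_\tau\gamma(\alpha) = \alpha$, from which the short computation of \S\ref{sec:twisted_involutions} gives $w_{s_\alpha\tau} = s_\alpha w_\tau$. A representative of the Cayley transform is $\sigma_\alpha\,\xi_{b,\tau}$. Moving $\sigma_\alpha$ past $\exp(i\pi\ch\rho)$ contributes $\exp(-i\pi\ch\alpha)$ because $s_\alpha\ch\rho = \ch\rho - \ch\alpha$; then $\sigma_\alpha\,\sigma_{w_\tau}$ equals $\sigma_{w_{s_\alpha\tau}}$ if $\ell(s_\alpha w_\tau) = \ell(w_\tau)+1$, and equals $\exp(i\pi\ch\alpha)\,\sigma_{w_{s_\alpha\tau}}$ otherwise. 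In both subcases $\sigma_\alpha\,\xi_{b,\tau}$ differs from $\xi_{b, s_\alpha\tau}$ by a factor in $\{1,\exp(-i\pi\ch\alpha)\}\subset H$. Since $\tau(\ch\alpha)=\ch\alpha$ we have $(s_\alpha\tau)(\ch\alpha) = -\ch\alpha$, so with $h = \exp(-i\pi\ch\alpha/2)$ one has $h\cdot(s_\alpha\tau)(h\inv) = \exp(-i\pi\ch\alpha)\in A_{s_\alpha\tau}$: both representatives therefore project to the same element of $\X_{s_\alpha\tau}$ under the quotient of \S\ref{sec:action_F_fiber}. To be sure that $c^\alpha(x_{b,\tau})$ is defined, I must also check that $\alpha$ is $x_{b,\tau}$-noncompact; this is the ``large'' property of the distinguished basepoint recalled after \eqref{def_basepoint}, and amounts to $\mathrm{Ad}(\xi_{b,\tau})X_\alpha = -X_\alpha$, where the sign comes from $\alpha(\exp(i\pi\ch\rho)) = -1$ (since $\langle\alpha,\ch\rho\rangle=1$) while a Tits-group sign computation shows that $\mathrm{Ad}(\sigma_{w_\tau}\xi_\gamma)$ fixes $X_\alpha$; this is the crux of \cite[Proposition~3.2]{TwistedParameters}.

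The complex case is handled in the same spirit. Here $w_{s_\alpha\tau s_\alpha} = s_\alpha\,w_\tau\,\gamma^W(s_\alpha)$, and a cross-action representative is $\sigma_\alpha\,\xi_{b,\tau}\,\sigma_\alpha\inv$. Applying the three Tits identities above, and in particular $\xi_\gamma\,\sigma_\alpha\inv\,\xi_\gamma\inv = \sigma_{\gamma(\alpha)}\inv$, one expands
\[
\sigma_\alpha\,\xi_{b,\tau}\,\sigma_\alpha\inv \;=\; \exp(i\pi(\ch\rho-\ch\alpha))\cdot\bigl(\sigma_\alpha\,\sigma_{w_\tau}\,\sigma_{\gamma(\alpha)}\inv\bigr)\cdot\xi_\gamma.
\]
The middle product equals $\sigma_{w_{s_\alpha\tau s_\alpha}}$ up to factors in $\exp(i\pi\ch\alpha)$ and $\exp(i\pi\,w_\tau\gamma(\ch\alpha)) = \exp(i\pi\tau(\ch\alpha))$ coming from the two possible length decreases. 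Together with the prefactor $\exp(-i\pi\ch\alpha)$, any leftover torus factor is of the form $\exp(i\pi\ch\beta)$ with $(s_\alpha\tau s_\alpha)(\ch\beta) = -\ch\beta$, and the argument $h = \exp(-i\pi\ch\beta/2)$ of the previous paragraph exhibits it as an element of $A_{s_\alpha\tau s_\alpha}$. Hence it is absorbed by an $H$-conjugation, giving $s_\alpha\times x_{b,\tau} = x_{b, s_\alpha\tau s_\alpha}$.

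The main obstacle is disciplined sign-tracking through the Tits group: the subcases where lengths decrease produce extra $\exp(i\pi\ch\alpha)$ terms that must be shown to belong to the correct identity component $A_{\tau'}$. The factor $\exp(i\pi\ch\rho)$ in the definition of $\xi_{b,\tau}$ is chosen exactly so that every such leftover balances out; this is the same bookkeeping already carried out in \cite[\S 3]{TwistedParameters} around the distinguished quasisplit basepoint, and I do not anticipate any new difficulty beyond following that template carefully.
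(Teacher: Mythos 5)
Your route is genuinely different from the paper's: the paper characterizes $x_{b,\tau}$ as the unique element of $\X_\tau$ with trivial \emph{normalized torus part} and simply reads the invariance of that property off the tables in \cite[p.~65]{TwistedParameters}, whereas you recompute everything directly in the Tits group from the representative $\xi_{b,\tau}=\exp(i\pi\ch\rho)\sigma_{w_\tau}\xi_\gamma$. That is a legitimate (if more laborious) approach, and your imaginary case is correct: the leftover factor is $1$ or $\exp(-i\pi\ch\alpha)$, and since $(s_\alpha\tau)(\ch\alpha)=-\ch\alpha$ the element $h=\exp(-i\pi\ch\alpha/2)$ does exhibit it as an element of $A_{s_\alpha\tau}$.

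The complex case, however, contains a genuine gap. First, your bookkeeping of when the correction factors appear is wrong: in the product $\sigma_\alpha\,\sigma_{w_\tau}\,\sigma_{\gamma(\alpha)}\inv$ the factor $m_\alpha=\sigma_\alpha^2$ appears when $\ell(s_\alpha w_\tau)<\ell(w_\tau)$, but the factor $m_{\gamma(\alpha)}$ appears when $\ell(s_\alpha w_\tau s_{\gamma(\alpha)})>\ell(s_\alpha w_\tau)$ (because you are multiplying by $\sigma_{\gamma(\alpha)}^{-1}$, not $\sigma_{\gamma(\alpha)}$); since both lengths move in the same direction for a complex root, exactly one of the two factors occurs, never both. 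Second, and more seriously, the criterion you invoke --- that the leftover is $\exp(i\pi\ch\beta)$ with $\tau'(\ch\beta)=-\ch\beta$, $\tau'=s_\alpha\tau s_\alpha$ --- fails for the factors as you list them: $\tau'(\ch\alpha)=-s_\alpha\tau(\ch\alpha)$ equals $-\ch\alpha$ only if $\alpha$ is $\tau$-\emph{real}, which is excluded here, so neither $\exp(i\pi\ch\alpha)$ nor $\exp(i\pi\tau(\ch\alpha))$ individually lies in $A_{\tau'}$ in general (e.g.\ for $G=\SL(2)\times\SL(2)$ with the swap inner class and $\tau=\tau_\gamma$, one has $m_{\alpha_1}=(-I,I)\notin A_{\tau'}=\{(z,z)\}$). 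The statement is only saved by the \emph{combined} exponent: a careful computation gives leftover $1$ in the length-decrease case and $\exp\bigl(-i\pi(\ch\alpha+s_\alpha\tau(\ch\alpha))\bigr)$ in the length-increase case, and one checks $\tau'\bigl(\ch\alpha+s_\alpha\tau(\ch\alpha)\bigr)=-\bigl(\ch\alpha+s_\alpha\tau(\ch\alpha)\bigr)$, so your $h=\exp(-i\pi\ch\beta/2)$ trick then applies. As written, this decisive verification is asserted rather than performed, and the intermediate claims it rests on are false; you need to carry out the combined computation explicitly to close the argument.
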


\begin{proof}
The basepoint $x_{b,\tau}$ can be characterized as the element of $\mathcal{X}_\tau$
with \emph{trivial normalized torus part}, see \cite[Proposition 3.2]{TwistedParameters}.

Thus what we need to check is that $c^{\alpha}(x_{b, \tau})$ has trivial normalized torus part.
This follows from the calculation of normalized torus parts in Table 2 on page 65 of~\cite{TwistedParameters}:
see lines 3--5 of the table; the calculation in the fourth column (out of six)
shows that if $x$ has torus part zero and central cocharacter $z_\ast$,
then $c^{\alpha}(x)$ also has torus part~0.  

The case of complex cross actions is similar, using once more 
the calculation of normalized torus parts in Table~2 of \cite[p. 65]{TwistedParameters}:
see the first line and fourth column there,
again using the fact that $x_{b, \tau}$ has torus part zero and central cocharacter $z_\ast$.
\end{proof}

\subsubsection{Cayley transforms and characters of component groups} \label{sec:discussion_cayley} Now let $\alpha$ be a $\tau$-imaginary root, and let $\tau'=s_\alpha\tau$. Recall  $\tau'$ is an involution and $\alpha$ is $\tau'$-real.
Using Proposition \ref{prop:bij_char_KGB_4} we want to understand the Cayley transform $c^\alpha\colon \X_{\tau}[\alpha]\to \X_{\tau'}$ in 
terms of  $\Char(\widetilde{\mathbb{S}}_{\ch{\tau}})$ and  $\Char(\widetilde{\mathbb{S}}_{\ch{\tau'}})$.
Recall we  have a canonical bijection $\mathcal{D}_\tau \colon \X_\tau\to \Char(\Schtautilde)$,  and  $\Char(\Schtautilde)$ identifies with $\Ut_\tau$ (Section \ref{sec:isomorphism_torusgroups}); therefore we have a bijection, still denoted $\mathcal{D}_\tau$, between $\X_\tau$ and $U_\tau$. A key point is that under this bijection, the domain
$\X_\tau[\alpha]$ corresponds to the kernel of a certain character of $\Ut_\tau$.

\begin{lemm}\label{lemm:inv_rep}
\begin{enumerate}[(1)]
\item If $h$ is an element of $H^{-\tau}_Z$, then $\alpha(h) = \pm 1$. If $h \in A_\tau$ then $\alpha(h)=1$. Therefore $\alpha$ induces a character $\overline\alpha$ of $\Ut_\tau$, of order at most~$2$.
\item Let $\Vtau \subset \Ut_\tau$ be the kernel of $\overline\alpha$.   Then the image of $\X_\tau[\alpha]$ under $\mathcal{D}_\tau$ is $\Vtau$. 
\item  If $\overline\alpha$ is trivial then $\Vtau=\Ut_\tau$ and $\X_\tau[\alpha]=\X_\tau$.
\item  If $\overline\alpha$ is non-trivial then there exists $h_0\in H^{-\tau}_Z$ with $\alpha(h_0)=-1$,
$\Vtau$ has index $2$ in $\Ut_\tau$, and $\X_\tau=\X_\tau[\alpha]\cup h_0\X_\tau[\alpha]$ (disjoint union).
\end{enumerate}
\end{lemm}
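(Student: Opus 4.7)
The strategy is to combine a direct computation with the explicit simply transitive action of $\Ut_\tau$ on $\X_\tau$ from Lemma~\ref{action_toruscomp}, using the basepoint $x_{b,\tau}$ of \S\ref{sec:basepoint}. Under the resulting bijection $\mathcal{D}_\tau\colon\X_\tau\overset{\sim}{\to}\Ut_\tau$, every $x\in\X_\tau$ has a representative of the form $\xi=h\xi_{b,\tau}$ with $h\in H^{-\tau}_Z$, and the $x$-compactness of $\alpha$ can then be read off from $\alpha(h)$, combined with the compactness of $\alpha$ at the basepoint.

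For (1), $\alpha$ being $\tau$-imaginary means $\alpha(\tau(h))=\alpha(h)$ for all $h\in H$. Hence for $h\in H^{-\tau}_Z$ we have $\alpha(h)^2=\alpha(h)\alpha(\tau(h))=\alpha(h\tau(h))$, and since $h\tau(h)\in Z(G)$ and $\alpha(Z(G))=\{1\}$ we conclude $\alpha(h)=\pm 1$. For $h=k\tau(k)^{-1}\in A_\tau$ the same identity yields $\alpha(h)=\alpha(k)\alpha(k)^{-1}=1$. Therefore $\alpha$ descends to a character $\overline\alpha$ of $\Ut_\tau$ of order at most two.

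For (2), fix the representative $\xi_{b,\tau}$ of \eqref{def_basepoint} and write $\xi=h\xi_{b,\tau}$ with $h\in H^{-\tau}_Z$. Since $\alpha$ is $\tau$-imaginary, $\theta_{\xi_{b,\tau}}(X_\alpha)=\epsilon_0 X_\alpha$ for some $\epsilon_0\in\{\pm 1\}$, and a direct computation gives
\[
\theta_{\xi}(X_\alpha)=\mathrm{Ad}(h)\bigl(\theta_{\xi_{b,\tau}}(X_\alpha)\bigr)=\epsilon_0\,\alpha(h)\,X_\alpha,
\]
so $\alpha$ is $x$-noncompact iff $\epsilon_0\alpha(h)=-1$. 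The key structural input is that $\alpha$ is noncompact at $x_{b,\tau}$, i.e.\ $\epsilon_0=-1$: when $\alpha$ is simple this is the ``large'' property recalled at the end of \S\ref{def_representative_basepoint} (reflecting the quasisplitness of the strong real form attached to $x_{b,\tau}$); for general $\tau$-imaginary $\alpha$ one reduces to the simple case, using the action of the imaginary Weyl group $W_{i,\tau}$ from \S\ref{s:crosscayleykgb} together with \cite[Proposition 13.12]{ABV}. Granting this, $x\in\X_\tau[\alpha]$ iff $\alpha(h)=1$ iff $[h]\in\Vtau$, which proves (2).

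The remaining statements are formal consequences. If $\overline\alpha$ is trivial then $\Vtau=\Ut_\tau$, whose preimage under $\mathcal{D}_\tau$ is all of $\X_\tau$; this gives (3). If $\overline\alpha$ is nontrivial, nontriviality directly produces $h_0\in H^{-\tau}_Z$ with $\alpha(h_0)=-1$; then $\Ut_\tau=\Vtau\sqcup [h_0]\Vtau$, and transporting this coset decomposition back through $\mathcal{D}_\tau$ and the $\Ut_\tau$-action yields the disjoint union $\X_\tau=\X_\tau[\alpha]\sqcup h_0\X_\tau[\alpha]$ of (4). The only nonformal ingredient in the whole argument is the noncompactness of $\alpha$ at $x_{b,\tau}$, which is the main obstacle and which rests essentially on the specific construction of $\xi_{b,\tau}$ in~\eqref{def_basepoint}.
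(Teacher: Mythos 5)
Your proof is correct and follows essentially the same route as the paper's: the same computation for (1), and for (2) the same reduction to the basepoint via the simply transitive $\Ut_\tau$-action, with the noncompactness of $\alpha$ at $x_{b,\tau}$ (the "large" property of the basepoint) as the one structural input, which the paper likewise invokes without further argument. Parts (3) and (4) are handled identically as formal consequences.
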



\begin{proof} For~(1), by definition every element $h \in H^{-\tau}_Z$ satisfies $h \tau(h) \in Z(G)$, therefore $\alpha(h\tau(h))=1$; but $\alpha\circ \tau = \alpha$ since $\alpha$ is $\tau$-imaginary, so $\alpha(h)^2 = 1$ and $\alpha(h)=\pm 1$. If $h \in A_{\tau}$, i.e. if  $h= s\tau(s^{-1})$ with $s \in H$, then $\alpha(h)=\alpha(s) \alpha(s^{-1})=1$. 

Let us prove~(2). Let~$x_b$ be the basepoint of $\X_\tau$. The image of $\X_\tau[\alpha]$ under $\mathcal{D}_\tau$ consists of those elements~$u \in \Ut_\tau$ such that $u \cdot x_b$ remains in $\X_\tau[\alpha]$. Let~$h$ be an element of~$H^{-\tau}_Z$, and fix a representative $\xi \in \widetilde{\X}_{x_b}$. Since~$\alpha$ is $x_b$-noncompact, the root vector~$X_\alpha$  attached to our pinning $\mathcal{P}$ satisfies $\mathrm{int}(\xi)X_\alpha = -X_\alpha$. Then $h\xi$ is a representative of $x= u \cdot x_b$. Now $\mathrm{int}(h)(X_\alpha) = \alpha(h)X_\alpha$ by definition of the roots. Therefore~$\alpha$ is $x$-compact if $\alpha(h)=-1$, and $x$-noncompact if $\alpha(h)=1$. This means $u \cdot x_b$ remains in $\X_\tau[\alpha]$ if and only if $u$ is in~$\Vtau$, which proves (2).

Parts (3) and (4) are immediate from the preceding arguments. \end{proof}

It is useful  to  realize $\Vtau$ in a different way: as the fixed points of the reflection $s_\alpha$ acting on~$\Ut_\tau$.
Let $m_\alpha=\sigma_\alpha^2=\alpha^\vee(-1)\in H$.

\begin{lemm}\label{lemm:reflection}
The action of $s_\alpha$ on $H$ preserves $H^{-\tau}_Z$ and is trivial on $A_\tau$. Therefore it induces an action of $s_\alpha$ on $\Ut_\tau$. If $u\in \Ut_\tau$, then 
$$
s_\alpha(u)=\begin{cases}u&\text{if $u\in \Vtau$,}\\
m_\alpha\,{}u &\text{otherwise.}
\end{cases}
$$
\end{lemm}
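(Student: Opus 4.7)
The key observation is that since $\alpha$ is $\tau$-imaginary, $\tau$ preserves $\alpha$ and $\alpha^\vee$, hence the automorphisms $\tau$ and $s_\alpha$ of $H$ commute. Write the Weyl reflection in its standard form on the level of $H$:
\begin{equation}
s_\alpha(h)=h\cdot\alpha^\vee(\alpha(h))^{-1}\qquad(h\in H).
\end{equation}
I will use this formula throughout and deduce everything from it together with the fact, established in Lemma~\ref{lemm:inv_rep}(1), that $\alpha(h)\in\{\pm 1\}$ for $h\in H_Z^{-\tau}$.

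First I would check the two preservation statements. For $h\in H_Z^{-\tau}$, commuting $\tau$ and $s_\alpha$ gives $s_\alpha(h)\tau(s_\alpha(h))=s_\alpha(h\tau(h))$; since $h\tau(h)\in Z(G)$ and $W$ acts trivially on $Z(G)$, this lies in $Z(G)$, so $s_\alpha(H_Z^{-\tau})\subset H_Z^{-\tau}$. For triviality of $s_\alpha$ on $A_\tau$, I would pass to Lie algebras: $A_\tau$ is the connected torus $\exp(\mathfrak{h}^{-\tau})$, where $\mathfrak{h}^{-\tau}$ is the $(-1)$-eigenspace of $d\tau$. The $\tau$-imaginary condition $d\tau^\top(\alpha)=\alpha$ means exactly that $\alpha$ vanishes on $\mathfrak{h}^{-\tau}$. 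Hence for $X\in \mathfrak{h}^{-\tau}$, $s_\alpha(X)-X=-\alpha(X)\alpha^\vee=0$, so $s_\alpha$ is the identity on $A_\tau$. Consequently $s_\alpha$ descends to an automorphism of the quotient $\Ut_\tau=H_Z^{-\tau}/A_\tau$.

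For the explicit formula on $\Ut_\tau$, take $h\in H_Z^{-\tau}$ representing $u$. If $u\in V_{\tau,\alpha}$ then $\overline\alpha(u)=\alpha(h)=1$, and the displayed formula gives $s_\alpha(h)=h$, so $s_\alpha(u)=u$. Otherwise $\alpha(h)=-1$ by Lemma~\ref{lemm:inv_rep}(1), and the formula yields
\begin{equation}
s_\alpha(h)=h\cdot\alpha^\vee(-1)^{-1}=h\cdot m_\alpha^{-1}=h\cdot m_\alpha,
\end{equation}
where in the last equality I use that $m_\alpha=\alpha^\vee(-1)$ has order at most $2$. Noting that $m_\alpha$ itself belongs to $H_Z^{-\tau}$ (since $\tau(m_\alpha)=m_\alpha$ because $\alpha$ is $\tau$-imaginary, and $m_\alpha^2=1\in Z(G)$), and that $\Ut_\tau$ is abelian, this reads $s_\alpha(u)=m_\alpha u$ in $\Ut_\tau$, as claimed.

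I do not expect any real obstacle: the computation is essentially a one-line consequence of the standard reflection formula $s_\alpha(h)=h\alpha^\vee(\alpha(h))^{-1}$ combined with Lemma~\ref{lemm:inv_rep}(1). The only mild subtlety is being careful that the two $\tau$-imaginary identities needed (commutation of $s_\alpha$ with $\tau$, and vanishing of $\alpha$ on $\mathfrak{h}^{-\tau}$) both follow from the single hypothesis that $\alpha$ is $\tau$-imaginary.
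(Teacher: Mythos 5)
Your proof is correct and follows essentially the same route as the paper's: both rest on the reflection formula $s_\alpha(h)=h\,\alpha^\vee(\alpha(h^{-1}))$ together with Lemma~\ref{lemm:inv_rep}(1), which forces $\alpha(h)=\pm1$ on $H^{-\tau}_Z$ and $\alpha=1$ on $A_\tau$. Your justifications of the two preservation statements (commutation of $s_\alpha$ with $\tau$ for $H^{-\tau}_Z$, and the Lie-algebra computation for $A_\tau$) differ cosmetically from the paper's, which instead observes directly that $s_\alpha(h)\in\{h,\,m_\alpha h\}$ with $m_\alpha\in H^{-\tau}_Z$, but the substance is the same.
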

\begin{proof} 
The action of $s_\alpha$ on $H$ is given by:
\begin{equation} \label{formula_s_alpha} s_\alpha(h)=h\,{} \alpha^{\vee}(\alpha(h^{-1})).\end{equation}
(To prove the formula, write~$h$ as $\exp(2i\pi X)$, with $X \in \ch{\mathfrak{h}}$, and calculate
\begin{align*} s_\alpha(h) &= \exp(2i\pi\,{}s_\alpha(X)) = \exp(2i\pi (X-\alpha(X)\alpha^\vee))\\
& = \exp(2i\pi X)\exp(-2i\pi\alpha(X)\alpha^\vee)=h\,{} \alpha^\vee(\exp(-2i\pi\alpha(X))) =
h \,{}\alpha^\vee(\alpha(h^{-1}))\end{align*}
 which gives the desired result.)
 If~$h$ is an element of $H^{-\tau}_Z$, then $\alpha(h) = \pm 1$, so~$s_\alpha(h) = h \alpha^{\vee}(\pm 1)$:
 therefore  $s_\alpha(h)=h$ if $\alpha(h)=1$, 
 and $s_\alpha(h)=m_\alpha\,{}h$ if $\alpha(h)=-1$. We have  $m_\alpha \in H^{-\tau}_Z$  since $\tau(m_\alpha)=m_\alpha$ and $m_\alpha^2 = 1$. Therefore the action of~$s_\alpha$ preserves~$H^{-\tau}_Z$, and is trivial on~$A_\tau$ since $\alpha=1$ on~$A_\tau$. This proves all assertions in the Lemma. 
\end{proof}

Now consider the unique map 
 \begin{equation} \label{lambda_alpha}
\lambda_\alpha\colon \Vtau \to \Ut_{\tau'}
\end{equation}
which implements the Cayley transform $\X_\tau[\alpha] \to \X_{\tau'}$, in the sense that the following diagram commutes: 
\begin{equation}\label{diagram_cayley_v2}
\xymatrix{
\Vtau \ar@{<-}[d]_{\mathcal D_{\tau}}^{\simeq} \ar@{->}[r]^{\lambda_\alpha}& \Ut_{\tau'} \ar@{<-}[d]_{\simeq}^{\mathcal D_{\tau'}}\\
\X_{\tau}[\alpha]\ar@{->>}[r]^{c^\alpha}&\X_{\tau'}.
}
\end{equation}

Clearly $\lambda_{\alpha}$ is surjective.
We will show that it can be described by  a very simple formula.

\begin{lemm}\label{lemm:inclusion_agroups} We have $A_\tau \subset A_{\tau'}$ and $m_\alpha \in A_{\tau'}$. 
\end{lemm}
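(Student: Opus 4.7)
The plan is to argue both inclusions essentially on the Lie algebra level, then lift to the group using connectedness.

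The starting observation is that $\alpha$ being $\tau$-imaginary means $\tau^{*}(\alpha)=\alpha$ on $X^{*}(H)$; since an automorphism of the root datum sends $\alpha$ to $\beta$ iff it sends $\alpha^{\vee}$ to $\beta^{\vee}$, the cocharacter $\alpha^{\vee}$ is also $\tau$-fixed. Consequently $s_{\alpha}$ commutes with $\tau$, so that $\tau'=s_{\alpha}\tau=\tau s_{\alpha}$ is indeed an involution, and $\tau(m_{\alpha})=m_{\alpha}$.

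For the inclusion $A_{\tau}\subset A_{\tau'}$, I would first check it at the Lie algebra level. Since $A_{\tau}$ is by definition the identity component of $H^{-\tau}$, its Lie algebra is $\mathfrak{h}^{-\tau}$, and similarly for $A_{\tau'}$. Take $X\in\mathfrak{h}^{-\tau}$. Because $\tau^{*}$ acts as $+1$ on $\alpha$ and the $(+1)$-eigenspace of $\tau^{*}$ on $\mathfrak{h}^{*}$ is the annihilator of $\mathfrak{h}^{-\tau}$, one has $\alpha(X)=0$. Therefore
\[
  \tau'(X)=s_{\alpha}\tau(X)=-s_{\alpha}(X)=-\bigl(X-\alpha(X)\alpha^{\vee}\bigr)=-X,
\]
so $X\in\mathfrak{h}^{-\tau'}$. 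Hence $\mathfrak{h}^{-\tau}\subset\mathfrak{h}^{-\tau'}$, and since $A_{\tau}$ is connected this upgrades to $A_{\tau}\subset A_{\tau'}$.

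For $m_{\alpha}\in A_{\tau'}$, I first verify that $m_{\alpha}\in H^{-\tau'}$ and then exhibit a continuous path from $1$ to $m_{\alpha}$ inside $H^{-\tau'}$ to land in the identity component. A direct computation gives $s_{\alpha}(\alpha^{\vee}(z))=\alpha^{\vee}(z^{-1})$, so
\[
  \tau'\bigl(\alpha^{\vee}(z)\bigr)=s_{\alpha}\tau\bigl(\alpha^{\vee}(z)\bigr)=s_{\alpha}\bigl(\alpha^{\vee}(z)\bigr)=\alpha^{\vee}(z^{-1})=\alpha^{\vee}(z)^{-1}.
\]
Thus the entire image of $\alpha^{\vee}\colon\mathbb{C}^{\times}\to H$ is contained in $H^{-\tau'}$. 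This image is connected and contains both $1=\alpha^{\vee}(1)$ and $m_{\alpha}=\alpha^{\vee}(-1)$, so $m_{\alpha}\in(H^{-\tau'})^{\circ}=A_{\tau'}$.

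None of the steps presents a serious obstacle; the only point that requires a little care is not stopping at $m_{\alpha}\in H^{-\tau'}$, but also producing the connecting path (via $\alpha^{\vee}$) so as to land in the identity component $A_{\tau'}$ rather than just in $H^{-\tau'}$.
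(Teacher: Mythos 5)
Your proof is correct and follows essentially the same route as the paper's: both inclusions come down to the observation that $\alpha$ (hence $s_\alpha$) is trivial on $A_\tau$ together with a connectedness argument. The only cosmetic differences are that you run the first step at the Lie algebra level rather than via a path in $A_\tau$, and for $m_\alpha$ you use connectedness of the image of $\alpha^\vee$ where the paper writes $m_\alpha=s\,\tau'(s^{-1})$ with $s=\alpha^\vee(i)$ and invokes $A_{\tau'}=\{h\tau'(h^{-1})\}$; both are fine.
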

\begin{proof} Recall $A_\tau$ is the identity component of $H^{-\tau}$. Therefore if $a \in A_\tau$, there is a continuous path $t \mapsto a(t)$ from $1$ to $a$. We just checked  $s_\alpha(a(t)) = a(t)$ for all $t$, therefore $a(t)$ is also in $H^{-\tau'}$ for all $t$, and belongs to the identity component $A_{\tau'}$. This proves the first assertion. For the second, set $s = \alpha^\vee(i)$; this is an element of~$\ch{H}$; by  \cite[(14.3)]{Algorithms} we have $s \tau(s)^{-1} =  s^2 = m_\alpha$. \end{proof}

\begin{lemm} \label{lemm:constraint_lambda} 
Let $\lambda_\alpha$ be the unique map $\Vtau \to \Ut_{\tau'}$ making Diagram~\eqref{diagram_cayley_v2} commute. Then  for $h \in H^{-\tau}_Z$ with $\alpha(h)=1$, 
\begin{equation}\label{lambda_on_Vtau} \lambda_\alpha(h A_\tau) = hA_{\tau'}.\end{equation}
The map $\lambda_{\alpha}$ is a surjective group homomorphism. 
\end{lemm}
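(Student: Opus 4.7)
The plan is to obtain the explicit formula~\eqref{lambda_on_Vtau} for $\lambda_\alpha$ by computing the Cayley transform on carefully chosen representatives, and then to read the formula off the commutative diagram~\eqref{diagram_cayley_v2}; the homomorphism and surjectivity properties will then be immediate. The central ingredient is the behavior of the basepoints $x_{b,\tau}, x_{b,\tau'}$ under $c^\alpha$ (Lemma~\ref{basepoint}), combined with the fact that $s_\alpha$ acts trivially on elements $h \in H^{-\tau}_Z$ with $\alpha(h)=1$ (Lemma~\ref{lemm:reflection}).

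First I would observe, using Lemma~\ref{lemm:inv_rep}, that every class in $\Vtau$ has the form $hA_\tau$ with $h\in H^{-\tau}_Z$ and $\alpha(h)=1$. For such an $h$, Lemma~\ref{lemm:reflection} gives $s_\alpha(h)=h$, and writing $\tau(h)=zh^{-1}$ with $z\in Z(G)$ we find
\[
\tau'(h) = s_\alpha(\tau(h)) = z\,s_\alpha(h)^{-1} = z h^{-1},
\]
so that $h$ lies in $H^{-\tau'}_Z$ and the class $hA_{\tau'}$ is a well-defined element of $\Ut_{\tau'}$. Combined with the inclusion $A_\tau \subset A_{\tau'}$ of Lemma~\ref{lemm:inclusion_agroups}, this shows that $hA_\tau \mapsto hA_{\tau'}$ is a well-defined assignment $\Vtau \to \Ut_{\tau'}$. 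To identify it with $\lambda_\alpha$, fix a strong involution $\xi_{b,\tau} \in \Xt$ representing $x_{b,\tau}$ and a representative $g_\alpha \in \Norm_G(H)$ of $s_\alpha$. By Lemma~\ref{basepoint}, $g_\alpha \xi_{b,\tau}$ represents $c^\alpha(x_{b,\tau}) = x_{b,\tau'}$. The element $h\xi_{b,\tau}$ represents $h \cdot x_{b,\tau} \in \X_\tau[\alpha]$, and the key computation is
\[
g_\alpha(h\xi_{b,\tau}) = (g_\alpha h g_\alpha^{-1})\,g_\alpha \xi_{b,\tau} = s_\alpha(h)\,g_\alpha \xi_{b,\tau} = h\,g_\alpha \xi_{b,\tau},
\]
using $s_\alpha(h)=h$ in the last step. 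Therefore $c^\alpha(h \cdot x_{b,\tau}) = h \cdot x_{b,\tau'}$, and passing through the bijections $\mathcal{D}_\tau$ and $\mathcal{D}_{\tau'}$ yields $\lambda_\alpha(hA_\tau) = hA_{\tau'}$ by commutativity of~\eqref{diagram_cayley_v2}.

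The homomorphism property of $\lambda_\alpha$ then follows at once from the formula: for $h_1,h_2 \in H^{-\tau}_Z$ with $\alpha(h_1)=\alpha(h_2)=1$ we have $\alpha(h_1h_2)=1$, hence $h_1h_2A_\tau \in \Vtau$ and $\lambda_\alpha(h_1h_2A_\tau)=h_1h_2A_{\tau'} = \lambda_\alpha(h_1A_\tau)\lambda_\alpha(h_2A_\tau)$. Surjectivity of $\lambda_\alpha$ is inherited from surjectivity of $c^\alpha$ (stated just after~\eqref{e:Xtau}) via the bijection $\mathcal{D}_{\tau'}$. I do not expect any genuine obstacle here: all the needed ingredients are prepared by Lemmas~\ref{basepoint},~\ref{lemm:inv_rep},~\ref{lemm:reflection} and~\ref{lemm:inclusion_agroups}, and the main point is simply the careful bookkeeping in the identity $g_\alpha h = s_\alpha(h)\,g_\alpha$, where the hypothesis $\alpha(h)=1$ is exactly what makes the Weyl group correction disappear and allows the map to be given by the naive formula $h \mapsto h$.
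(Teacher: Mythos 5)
Your proof is correct and follows essentially the same route as the paper's: both verify the formula by computing $c^\alpha$ on the representative $h\xi_{b,\tau}$, using the identity $g_\alpha h = s_\alpha(h)g_\alpha = hg_\alpha$ (valid because $\alpha(h)=1$ forces $s_\alpha(h)=h$ by Lemma~\ref{lemm:reflection}) together with the basepoint compatibility $c^\alpha(x_{b,\tau})=x_{b,\tau'}$ from Lemma~\ref{basepoint}. The well-definedness check via $h\in H^{-\tau'}_Z$ and $A_\tau\subset A_{\tau'}$, and the deduction of the homomorphism and surjectivity properties, also match the paper's argument.
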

To elaborate on the formula, suppose $u$ is an element of~$\Vtau$. By Lemma~\ref{lemm:reflection} we have~$u = h A_\tau$, where $h$ is an element of $H^{-\tau}_Z$ with $s_\alpha(h)=h$. Since the involution $\tau'$ is equal to $s_\alpha \tau$, we also have $h \in H^{-\tau'}_Z$: we have $\tau'(h) = s_\alpha \tau(h)=s_\alpha(zh^{-1}) = s_\alpha(z) h^{-1}$ for some $z \in Z(G)$, and $s_\alpha(z)=\mathrm{int}(\sigma_\alpha)(z)=z$, so $\tau'(h) h \in Z(G)$ as claimed. 
By Lemma~\ref{lemm:inclusion_agroups} the image of $h$ in $\Ut_{-s_\alpha\tau}$, i.e. the coset $u'=h A_{\tau'}$, depends only on $u=hA_\tau$ and not on the choice of representative~$h$. Formula~\eqref{lambda_on_Vtau} means we must have~$\lambda_\alpha(u)=u'$, therefore it does describe $\lambda_\alpha$ completely.
 
\smallskip

Let us now prove Lemma~\ref{lemm:constraint_lambda}. It is enough to check that the map defined by~\eqref{lambda_on_Vtau} makes Diagram \eqref{diagram_cayley_v2} commutative. Let $\lambda\colon \Vtau\to \Ut_{\tau'}$ be defined by $\lambda(hA_{\tau})=hA_{\tau'}$ for all $h \in H^{-\tau}_Z$ with $\alpha(h)=1$. Fix $y \in \X_\tau[\alpha]$ and write $y = v \cdot x_b$, where $x_b$ is the basepoint in $\X_\tau$ and $v \in \Vtau$, $v = hA_\tau$ with  $\alpha(h) = 1$. Let $\xi_{b}$ be the representative for $x_b$ defined in~\S\ref{sec:basepoint}; then $h \xi_b$ is a representative of $y$, and~$c^\alpha(y)$ is the image in $\X$ of $\sigma_\alpha h \xi_b = s_\alpha(h) (\sigma_\alpha \xi_b)$. Now $s_\alpha(h)=h$ by Lemma~\ref{lemm:reflection}, and $\sigma_\alpha \xi_b$ is a representative of the basepoint $x_{b,\tau'}$ by Lemma~\ref{basepoint}. This means $c^\alpha(y) = v' \cdot x_{b, \tau'}$, where $v' = h A_{\tau'}$. Therefore $\mathcal{D}_{\tau'}(c^\alpha(y)) = v' = \lambda(v) =\lambda(\mathcal{D}_\tau(y))$, which proves the commutativity of the diagram.  \qed
\bigskip

This gives a description of Cayley transforms in terms of the group $\Ut_\tau=H^{-\tau}_Z/A_\tau$. To convert this into an understanding in terms of component groups on the dual side, recall we have an explicit isomorphism $\Ut_\tau \simeq \Char(\widetilde{\mathbb{S}}_{\ch{\tau}})$ (Section~\ref{sec:isomorphism_torusgroups}). Let $\PiSpecial\subset \Char(\widetilde{\mathbb{S}}_{\ch{\tau}})$ be the subgroup corresponding to~$\Vtau$ under that isomorphism; by transport of structure from~$\lambda_\alpha$, we get a surjective homomorphism 
\begin{equation} \label{kappa_sub_alpha}
\lambda_\alpha\colon \PiSpecial \to \Char(\widetilde{\mathbb{S}}_{\ch{\tau}'})
\end{equation}
making the following diagram commutative: 
\begin{equation}\label{diagram_cayley_v1}
\xymatrix{
\PiSpecial\ar@{<-}[d]_{\mathcal D_{\tau}}^{\simeq}\ar@{->>}[r]^{\lambda_\alpha}&\Char(\wt{\mathbb S}_{\ch{\tau}'})\ar@{<-}[d]_{\simeq}^{\mathcal D_{\tau'}}\\
\X_{\tau}[\alpha]\ar@{->>}[r]^{c^\alpha}&\X_{\tau'}.
}
\end{equation}
Later on we will be interested in the dual of the top horizontal arrow, so define
\begin{equation} \label{tildeSnew} \tildeSnew = \text{Pontryagin dual of $\PiSpecial$.}\end{equation} 
Since $\PiSpecial$ is a subgroup of $\Char(\Schtautilde)$ of index $1$ or $2$,
this is a quotient of 
$\Schtautilde$ by a subgroup of order $1$ or $2$.
Here is a precise description.

\begin{lemm}\label{lemm:pispecial}
The kernel of the map $\Schtautilde\twoheadrightarrow \tildeSnew$ is generated by $\tilde m_\alpha$. Furthermore
\begin{equation}
\label{e:tildeSnew}
\tildeSnew\simeq \ch H^{\alg,\tau}/\langle (\ch H^{\alg,\tau})_0,\ch\tilde m_\alpha\rangle.
\end{equation}

\end{lemm}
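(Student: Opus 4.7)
By construction, $\tildeSnew$ is the Pontryagin dual of $\PiSpecial$, and the surjection $\Schtautilde \twoheadrightarrow \tildeSnew$ is dual to the inclusion $\PiSpecial \hookrightarrow \Char(\Schtautilde)$. Its kernel is therefore the annihilator of $\PiSpecial$ in $\Schtautilde$. The plan is to identify this annihilator with the cyclic subgroup generated by $\ch{\overline m}_\alpha$: once this is done, the first assertion is immediate, and the second follows by rewriting $\Schtautilde = \ch H^{\alg,\ch\tau}/(\ch H^{\alg,\ch\tau})_0$ and passing to the further quotient by $\ch{\tilde m}_\alpha$.

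\textbf{Paragraph 2 (Translate everything to $\Ut_\tau$).} To identify the annihilator, I translate everything through the isomorphism $\Char(\Schtautilde) \cong \Ut_\tau$ of Lemma~\ref{isoms_U}. By the definition of $\PiSpecial$ in Section~\ref{sec:discussion_cayley}, that subgroup corresponds to $\Vtau = \ker(\overline\alpha) \subset \Ut_\tau$. So it suffices to prove that for every $\chi \in \Char(\Schtautilde)$ corresponding to $u = \exp(\mu/2)\,A_\tau \in \Ut_\tau$, with $\mu \in X_\ast(H)^{-\tau}_{\mathbb{Q}}$, one has
\begin{equation*}
\chi(\ch{\overline m}_\alpha) = \overline\alpha(u) = \alpha(\exp(\mu/2)).
\end{equation*}
Granted this equality, we get $\PiSpecial = \{\chi \in \Char(\Schtautilde) : \chi(\ch{\overline m}_\alpha) = 1\}$; since $\ch{\overline m}_\alpha$ has order at most two, Pontryagin duality then forces its annihilator in $\Schtautilde$ to be exactly $\langle \ch{\overline m}_\alpha\rangle$, as wanted.

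\textbf{Paragraph 3 (Matching the two sides and the main obstacle).} The equality $\chi(\ch{\overline m}_\alpha) = \alpha(\exp(\mu/2))$ is a direct unwinding. On the right, using the identification $X_\ast(H) \hookrightarrow \mathfrak{h}$ and $\alpha(\exp X) = \exp(d\alpha(X))$, we compute $\alpha(\exp(\mu/2)) = e^{i\pi\langle\alpha,\mu\rangle}$. On the left, the character $\chi$ of $\Schtautilde$ is obtained, by the construction of Section~\ref{sec:isomorphism_torusgroups}, by restricting to $\ch H^{\alg,\ch\tau}$ the rational character of $\ch H^\alg$ attached to $\mu \in X_\ast(H)_{\mathbb{Q}} = X^\ast(\ch H^\alg)$, then passing to the component group. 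Evaluated on the distinguished lift $\ch{\tilde m}_\alpha = \alpha^\vee(-I_2)$ coming from the simply connected root-subgroup morphism $\mathrm{SL}(2,\mathbb{C}) \to \chGalg$ attached to $\alpha$, this gives $\mu(\alpha^\vee(-1)) = e^{i\pi\langle\mu,\alpha\rangle}$, which matches the right-hand side. The only real obstacle is therefore bookkeeping: one must keep straight the normalizations of $\exp$, of the embedding $X_\ast(H) \hookrightarrow \mathfrak{h}$, and of the natural pairing between $X^\ast(\ch H^\alg)$ and the cocharacters of $\ch H^\alg$ used to evaluate $\mu$ on $\alpha^\vee(-I_2)$; once these are fixed as in \S\ref{sec:isomorphism_torusgroups}, the two half-integer exponentials line up and the proof is complete.
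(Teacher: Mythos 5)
Your proof is correct, and its overall architecture is the same as the paper's: reduce both assertions to the single claim that $\PiSpecial$ is exactly the annihilator of $\ch{\overline{m}}_\alpha$ in $\Char(\Schtautilde)$, then conclude by Pontryagin duality. The only genuine difference is how that claim is verified. The paper transports the $s_\alpha$-action through the isomorphism $\Ut_\tau\simeq\Char(\Schtautilde)$, identifies $\PiSpecial$ with the $s_\alpha$-invariant characters via Lemma~\ref{lemm:reflection}, and then uses the fact that $s_\alpha$ acts on $\Schtautilde$ by multiplication by $1$ or $\ch{\overline{m}}_\alpha$. You instead evaluate directly: for $\chi\leftrightarrow u=\exp(\mu/2)A_\tau$ with $\mu\in X_*(H)^{-\tau}_{\Q}$, both $\chi(\ch{\overline{m}}_\alpha)$ and $\overline\alpha(u)$ unwind to $e^{i\pi\langle\alpha,\mu\rangle}$, so $\PiSpecial=\Vtau$ maps onto the characters killing $\ch{\overline{m}}_\alpha$. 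These are two faces of the same underlying pairing between $\alpha$ and $\ch{\overline{m}}_\alpha$; your version has the small advantage of avoiding the equivariance check on the isomorphism of \S\ref{sec:isomorphism_torusgroups}, at the cost of the normalization bookkeeping you flag in your last paragraph (which you do resolve correctly, since $\exp$ is normalized so that $X_*(H)$ is its kernel).
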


\begin{proof}
By Lemma~\ref{lemm:reflection} the group $\Vtau$ is the subgroup of $s_\alpha$-invariants of~$\Ut_{\tau}$. Inspecting the construction of the isomorphism $\Ut_{\tau} \simeq \Char(\wt{\mathbb S}_{\ch{\tau}})$ in Section~\ref{sec:isomorphism_torusgroups}, we see that the isomorphism takes the action of $s_\alpha$ on $\Ut_{\tau}$ to the action on $\Char(\wt{\mathbb S}_{\ch{\tau}})$ inherited from the action of $s_{\alpha}$ on $\ch{H}^\alg$. Therefore~$\PiSpecial$ is the group of characters invariant under the latter action of $s_\alpha$. 
Arguing as in the proof of Lemma~\ref{lemm:reflection}, we find that for all~$u$ in $\wt{\mathbb S}_{\ch{\tau}}$, the element $s_\alpha(u)$ is equal to~$u$ or~$\ch{\overline{m}}_{\alpha}u$ (see also \cite[p.~156]{ABV}).
This proves that the elements of $\PiSpecial$  are the characters of~$\wt{\mathbb S}_{\ch{\tau}}$ trivial on~$\ch{\overline{m}}_{\alpha}$, and that the kernel of $\Schtautilde\twoheadrightarrow \tildeSnew$ is generated by~$\ch{\overline{m}}_{\alpha}$. The final assertion follows.
\end{proof}

Now taking the dual of $\lambda_\alpha:\PiSpecial\rightarrow\Char(\Schtautilde)$
gives an injective homomorphism
$$
\lambda^\alpha:\Schtautildeprime\hookrightarrow \tildeSnew.
$$
Here is a concrete description of $\lambda^\alpha$.

\begin{lemm}\label{lemm:technical_statements_dualside} Let $u$ be an element of~$\Schtautildeprime$.
\begin{enumerate}[(1)]
\item There exists an element~$h$ of~$\ch{H}^{\alg, \tau}$ such that $u=h(\ch H^{\alg,\ch\tau'}_0)$. 
\item Let $v$ be the image of $h$ in
$\tildeSnew\simeq \ch H^{\alg,\tau}/\langle (\ch H^{\alg,\tau})_0,\ch\tilde m_\alpha\rangle$ 
\textup{(}see \eqref{e:tildeSnew}\textup{).} Then~$v$ depends only on~$u$, not on the choice of~$h$, and   $\lambda^\alpha(u)=v$. 
\end{enumerate}
\end{lemm}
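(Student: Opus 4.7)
The plan is to interpret $\lambda^\alpha$ concretely as the Pontryagin dual of the surjection $\lambda_\alpha \colon \PiSpecial \twoheadrightarrow \Char(\Schtautildeprime)$ from Lemma~\ref{lemm:constraint_lambda}, and to trace its explicit $G$-side description $hA_\tau \mapsto hA_{\tau'}$ through the isomorphisms of Lemma~\ref{isoms_U} to obtain both assertions simultaneously.

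For (1), I would deduce existence of the representative $h \in \ch H^{\alg, \ch\tau}$ by dualizing Lemma~\ref{lemm:constraint_lambda}. The surjectivity of $\lambda_\alpha \colon \Vtau \to \Ut_{\tau'}$ asserts that every coset in $\Ut_{\tau'}$ is of the form $hA_{\tau'}$ for some $h \in H^{-\tau}_Z$ with $\alpha(h)=1$, hence in $H^{-\tau}_Z \cap H^{-\tau'}_Z$; transporting this statement across Lemma~\ref{isoms_U} and taking Pontryagin duals yields that every element of $\Schtautildeprime$ admits a representative in $\ch H^{\alg, \ch\tau} \cap \ch H^{\alg, \ch\tau'}$, as required.

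For (2), I would pair both sides of the proposed identity $\lambda^\alpha(u) = v$ with an arbitrary $\chi \in \PiSpecial$, both sides lying in $\tildeSnew$, which is the Pontryagin dual of $\PiSpecial$ (Lemma~\ref{lemm:pispecial}). Using $\Char(\Schtautilde) \simeq \Ut_\tau$, I would represent $\chi$ by a class $h_0 A_\tau \in \Vtau$ with $\alpha(h_0)=1$, and choose via~\eqref{compatibility} a rational cocharacter $\mu \in X_\ast(H)^{-\tau}_\Q$ with $\exp(\mu/2) \equiv h_0 \pmod{A_\tau}$, so that the value of $\chi$ at any element of $\ch H^{\alg, \ch\tau}$ is computed by evaluating $\mu$ as a rational character of $\ch H^{\alg}$. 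By Lemma~\ref{lemm:constraint_lambda} the character $\lambda_\alpha(\chi)$ corresponds under $\Char(\Schtautildeprime) \simeq \Ut_{\tau'}$ to $h_0 A_{\tau'}$, hence to the same $\mu$ now viewed in $X_\ast(H)^{-\tau'}_\Q / (1-\tau')X_\ast(H)$, whose evaluation on any representative of $u$ in $\ch H^{\alg, \ch\tau'}$ yields the same number $\mu(h)$. This produces the chain $\lambda^\alpha(u)(\chi) = \lambda_\alpha(\chi)(u) = \mu(h) = v(\chi)$, proving~(2).

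Independence of $v$ on the choice of $h$ reduces to the containment $(\ch H^{\alg, \ch\tau'})_0 \cap \ch H^{\alg, \ch\tau} \subset \langle (\ch H^{\alg, \ch\tau})_0, \ch{\tilde m}_\alpha\rangle$, which is dual to the description in Lemma~\ref{lemm:pispecial} of the kernel of $\Schtautilde \twoheadrightarrow \tildeSnew$ as $\langle \ch{\tilde m}_\alpha \rangle$. The main obstacle I anticipate is the bookkeeping needed to check that the rational-character realization of $\chi$ used in step~(2) is truly compatible with the Lemma~\ref{isoms_U} isomorphisms for both $\tau$ and $\tau'$; once this compatibility is established, both (1) and (2) follow formally by Pontryagin-dual bookkeeping, avoiding any direct structural computation on the pro-torus $\ch H^{\alg}$.
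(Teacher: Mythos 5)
Your argument for part (1) has a genuine gap. The surjectivity of $\lambda_\alpha\colon \Vtau\to\Ut_{\tau'}$ is a statement about the quotient $H^{-\tau'}_Z/A_{\tau'}$ on the $G$-side; what you need is the mirror-image assertion on the dual side, namely that every connected component of $\ch H^{\alg,\ch\tau'}$ meets $\ch H^{\alg,\ch\tau}$. These two statements are formally analogous, but they are not Pontryagin duals of one another: Lemma~\ref{isoms_U} identifies $\Ut_{\tau'}$ with the \emph{character group} $\Char(\Schtautildeprime)$, so transporting the surjectivity of $\lambda_\alpha$ across that identification and dualizing yields only the injectivity of $\lambda^\alpha$ --- it cannot produce a representative of a given class $u$ that is fixed by $\ch\tau$ (equivalently, by $s_\alpha$). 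Duality pairs elements of one group against characters of the other; it does not transport ``every class has a representative with property P'' from one side to the other. Part (1) is a structural fact about the pro-torus that must be proved directly: without coverings one solves $\alpha(\alpha^\vee(\ch s)^2)=\alpha(\alpha^\vee(\ch h)^{-1})$ for $\ch s\in\ch H$ to replace a given $\ch h\in\ch H^{\ch\tau'}$ by an $s_\alpha$-invariant element of the same component, and one then lifts this adjustment to $\ch H^{\alg}$ by a path-lifting argument. The same misconception undermines your independence claim in (2): the containment $(\ch H^{\alg,\ch\tau'})_0\cap\ch H^{\alg,\ch\tau}\subset\langle(\ch H^{\alg,\ch\tau})_0,\ch{\tilde{m}}_\alpha\rangle$ is not ``dual to'' Lemma~\ref{lemm:pispecial}, which only records which subgroup of $\Schtautilde$ is divided out to form $\tildeSnew$ and says nothing about the identity component of the \emph{other} fixed-point group $\ch H^{\alg,\ch\tau'}$. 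That containment again requires an actual computation; it reduces, via the root subgroup attached to $\alpha$, to a rank-one ($\mathrm{SL}(2)$) calculation.

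On the positive side, your pairing computation $\lambda^\alpha(u)(\chi)=\lambda_\alpha(\chi)(u)=\mu(h)=v(\chi)$, with $\chi\in\PiSpecial$ realized by a rational cocharacter $\mu\in X_*(H)^{-\tau}_\Q$ and $\lambda_\alpha$ computed by the formula $\lambda_\alpha(hA_\tau)=hA_{\tau'}$ of Lemma~\ref{lemm:constraint_lambda}, is a sound and well-organized way to carry out the definition-chase establishing $\lambda^\alpha(u)=v$. But it presupposes the common representative $h\in\ch H^{\alg,\ch\tau}\cap\ch H^{\alg,\ch\tau'}$, whose existence is precisely the content of (1); so as written the proposal does not close the loop.
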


\begin{proof} For~(1), we begin without coverings and prove that every element of $(\ch{H}^{\ch{\tau}'})/(\ch{H}^{\ch{\tau}'})_0$ has an $s_\alpha$-invariant representative. 

Let us use~\eqref{formula_s_alpha} on the dual side: if we view $\alpha^\vee$ as a map $\ch{H}\to~\C$ and $\alpha$ as a map  $\C^\times \to \ch{H}$, then $s_\alpha(\ch{\eta})=\ch{\eta}\,{} \alpha(\alpha^\vee(\ch{\eta}))$ for all~$\ch{\eta}$ in~$\ch{H}$.
Now we begin with~$\ch{h}$ in~$\ch{H}^{\ch{\tau'}}$; since~$(\ch{H}^{\ch{\tau'}})_0$ comprises all elements of the form~$\ch{s}\ch{\tau'}(\ch{s})$ with $\ch{s} \in \ch{H}$,
we look for  $\ch{s}$ such that $\ch{h}\ch{s}\ch{\tau'}(\ch{s})$ is $s_\alpha$-invariant.
Using the formula for the action of $s_\alpha$, this is equivalent to
$\alpha(\alpha^\vee(\ch{h}))~\alpha(\alpha^\vee(\ch{s}))~\alpha(\alpha^\vee(\ch{\tau}'(\ch{s})))=1$, i.e. $\alpha(\alpha^\vee(\ch{s})^2) = \alpha(\alpha^\vee(\ch{h})^{-1})$ since $\alpha$ is $\ch{\tau}'$-imaginary.
We may choose $\ch{s}$ such that $\alpha^\vee(\ch{s})^2=\alpha^\vee(\ch{h})^{-1}$ since these are complex numbers,
and we have found an $s_\alpha$-invariant representative for the class of~$\ch{h}$ in  $(\ch{H}^{\ch{\tau}'})/(\ch{H}^{\ch{\tau}'})_0$.

Next we go over to coverings. Let~$\ch{\tilde{h}}$ be an element of $(\ch{H}^{\alg,\ch{\tau}'})$, let~$\ch{h}$ be its projection in~$\ch{H}^{\ch{\tau'}}$, and let~$\ch{a}$ be an element of $(\ch{H}^{\ch{\tau}'})_0$ such that~$\ch{h}\ch{a}$ is $s_\alpha$-invariant. Fix a continuous path $t \mapsto \ch{\eta}(t)$ in $(\ch{H}^{\ch{\tau}'})_0$ such that~$\ch{\eta}(0)=1$ and $\ch{\eta}(1) = \ch{a}$. Let $t \mapsto \ch{\tilde{\eta}}(t)$ be the unique lift of $\eta$ to $\ch{H}^{\alg}$ such that $\ch{\tilde{\eta}}(0)=1$, and let $\ch{\tilde{a}} = \ch{\tilde}{\eta}(1)$. Then $t \mapsto \ch{\tilde{h}}\ch{\tilde{\eta}}(t)$ is a continuous path in~$(\ch{H}^{\alg,\ch{\tau}'})$ between $\ch{\tilde{h}}$ and $\ch{\tilde{h}}\ch{\tilde{a}}$. The latter projects to~$\ch{h}\ch{a}$ which is in $\ch{H}^{\ch{\tau}}$, therefore it belongs to~$(\ch{H}^{\alg, \ch{\tau}})$; and it is a representative of the class of~$\ch{\tilde{h}}$ in~$\wt{\mathbb{S}}_{\ch{\tau'}}$. This concludes the proof of~(1). 

For the independence statement in~(2), it is enough to see that $(\ch{H}^{\alg, \ch{\tau}'})_0 \cap (\ch{H}^{\alg, \ch{\tau}})$ is contained in $\langle \ch{\tilde{m}}_\alpha,(\ch{H}^{\alg, \ch{\tau}})_0 \rangle$. To check this we may work in the group generated by~$\ch{H}^{\alg}$ and the image of the root subgroup morphism~$\mathrm{SL}(2,\C) \to \ch{G}^{\alg}$, which is locally isomorphic to $\mathrm{SL}(2)$; this reduces matters to a computation for~$\mathrm{SL}(2,\C)$ in the case~$\tau=1$, where the result is straightforward: see ~\cite[p.~202]{ABV}. The final  thing to check is that $\lambda^\alpha(u)=v$. This can be seen by carefully chasing definitions, beginning with the description of~$\lambda_\alpha$ in Lemma~\ref{lemm:constraint_lambda}, inserting the description of the isomorphism  $\Ut_\tau \simeq \Char(\widetilde{\mathbb{S}}_{\ch{\tau}})$ from Section~\ref{sec:isomorphism_torusgroups}, and using Pontryagin duality for group morphisms. We omit the details.
\end{proof}

\subsubsection{Cross actions and characters of component groups} \label{sec:cross_and_characters} Let~$\tau$ be an involution of~$H$ and let~$\alpha$ be a simple $\tau$-complex root. Suppose~$\tau' = s_\alpha \tau s_\alpha$. This situation is  much simpler than the case of Cayley transforms. First, the cross action $(s_\alpha \times~\,{})\colon\X_{\tau}\to\X_{\tau'}$ is a bijection. Second, the groups~$\ch{H}^{\ch{\tau}}$ and~$\ch{H}^{\ch{\tau}'}$ are conjugate, and after passage to coverings this induces an isomorphism of component groups $\widetilde{\mathbb{S}}_{\ch{\tau}}\simeq \widetilde{\mathbb{S}}_{\ch{\tau}'}$. To be more explicit about the second point, we can lift  the root subgroup morphism $\mathrm{SL}(2,\C)\to\ch{G}$ to a morphism $\mathrm{SL}(2,\C) \to \chGalg$ using the fact that~$\mathrm{SL}(2,\C)$ is simply connected (as in Section~\ref{sec:basepoint}), and define~${\tilde{\sigma}}_\alpha$ to be the image of~$\left(\begin{smallmatrix} 0 & 1\\-1 & 0\end{smallmatrix}\right)$. Then conjugation by~$\tilde{\sigma}_\alpha$ takes $\ch{H}^{\alg,\ch{\tau}}$ to~$\ch{H}^{\alg, \ch{\tau}'}$, and takes the identity component to the identity component; therefore it induces an isomorphism $\mathrm{int}(\tilde{\sigma}_\alpha)\colon  \widetilde{\mathbb{S}}_{\ch{\tau}}\to \widetilde{\mathbb{S}}_{\ch{\tau}'}$. Inspecting the definition of the bijections~$\mathcal{D}_{\tau}$, $\mathcal{D}_{\tau'}$ in Section~\ref{prop:bij_char_KGB_4}, and arguing as in the Cayley case (especially the proof of Lemma~\ref{lemm:constraint_lambda} using Lemma~\ref{basepoint}), we find that the following diagram commutes:
 \begin{equation}
\label{diagram_cross_v1}
\xymatrix{
\Char(\widetilde{\mathbb{S}}_{\ch{\tau}})\ar@{<->}[r]^{\mathrm{int}(\tilde{\sigma}_\alpha)}\ar@{<->}[d]_{\mathcal{D}_{\tau}}&\Char(\widetilde{\mathbb{S}}_{\ch{\tau}'})\ar@{<->}[d]^{\mathcal{D}_{\tau'}}\\
\X_{\tau}\ar@{<->}[r]^{(s_\alpha \times~\,{}) }&\X_{\tau'}.
}
\end{equation}
This gives an easier analogue of~\eqref{diagram_cayley_v1} for the case of complex cross actions.


\section{Description of the local Langlands correspondence} \label{sec:local_langlands}

\subsection{Atlas Parameters}
\label{s:atlasparam}
\label{sec:atlas_parameters}

\begin{defi}
\label{d:atlasparam}
  An \texttt{atlas} parameter is a pair $(x,\Lambda)$  satisfying the following conditions.
  First of all $x\in \X$ is a \texttt{KGB} element.
  Let $\tau=\int(x)\in \Aut(H)$, so the real torus $H(\R,\tau)$ is defined. Then $\Lambda$ is a genuine character of
  of the $\rho$-cover   $\widetilde{H}(\R,\tau)_\rho$ of $H(\R,\tau)$. Furthermore
  \begin{subequations}
    \label{e:dom}
  \renewcommand{\theequation}{\theparentequation)(\alph{equation}}
  \begin{equation}
        \label{e:doma}
  \langle d\Lambda,\ch\alpha\rangle \ge 0\quad\text{for all positive, $\tau$-imaginary roots}
\end{equation}
and
  \begin{equation}
            \label{e:domb}
  \langle d\Lambda,\ch\alpha\rangle> 0\quad\text{for all positive, $x$-compact roots.}
\end{equation}
We say $(x,\Lambda)$ is {\it final}  if 
\begin{equation}
\label{e:domc}
 \langle d\Lambda,\ch\alpha\rangle=0,\text{ for } \alpha \,\,\tau\text{-real}\Rightarrow\alpha\text{  satisfies the parity condition of \cite[Theorem 6.3(5)]{HermitianFormsSMF}.}
\end{equation}
\end{subequations}
\end{defi}

Attached to an \texttt{atlas} parameter $(x,\Lambda)$ is a  representation defined as follows.
First of all let~$G(\R)$ be the real form of $G$ defined by $\theta=\int(\xi)$ where $\xi\in \X_x$.

First we assume that $H(\R,\tau)$ is a relatively compact Cartan subgroup of $G(\R)$.
Assume
\begin{equation}
  \label{e:strict}
  \langle d\Lambda,\ch\alpha\rangle>0\quad\text{for all $\alpha\in \Delta^+$}.
\end{equation}
We define $\pi(\xi,\Lambda)$ to be the unique  discrete series representation of of $G(\R)$ whose character formula on the regular elements of $H(\R,\tau)$ is:

\begin{equation} \label{character_formula}
 \Theta_{\pi(\xi,\Lambda)}(g)=
(-1)^qD(\Delta^+,\tilde g)\sum_{w} \text{sgn}(w)(w\Lambda)(\tilde g)
\end{equation}

Here
$D(\Delta^+,\tilde g)$ is the Weyl denominator function
of \cite[Definition 13.5]{ComputingGlobalCharacters}, $q\in \mathbb Z$ is defined in \cite[Eq.~(2.8)]{ComputingGlobalCharacters},
and the sum is over the Weyl group $W(G(\R),H(\R,\tau))$. Finally $\tilde g$ is an inverse image of $g$ in  $\wt g\in \widetilde{H}(\R,\tau)_\rho$;
the right hand side factors to $H(\R,\tau)$.

If \eqref{e:strict} doesn't hold then $(\xi,\Lambda)$ defines a limit of discrete series representation, obtained by the Zuckerman translation
principle from a discrete series representation. See \cite[Definition 3.1]{ComputingGlobalCharacters}.
This representation is nonzero thanks to  \eqref{e:domb}.

We now drop the assumption that $H(\R,\tau)$ is relatively compact.
Let $A$ be the identity component of $H^{-\tau}$. The centralizer of $A$
is defined over $\R$, it can be written
$M(\R)A(\R)$, and $H(\R,\tau)$ is a relatively compact Cartan subgroup of $M(\R)A(\R)$.
Let $\Psi_r$ be the set of positive $\tau$-real roots and let $\rho_r=\frac12\sum_{\alpha\in\Psi_r}\alpha$.
Define $\Psi_i,\rho_i$ similarly using the $\tau$-imaginary roots.

Define a genuine character
~$\gamma$ of the $\rho_r$-cover of $H(\R,\tau)$:
\begin{equation}
  \label{e:gamma}
  \gamma(\tilde g)=\rho_r(\tilde g)/|\rho_r(\tilde g)|.
\end{equation}


Then it makes sense to define $\Lambda\otimes\gamma$, a genuine character of the $\rho_i$-cover of $H(\R,\tau)$ (see~\mbox{\cite[(4.4)]{ComputingGlobalCharacters}}).

Then $(x,\Lambda\otimes\gamma)$ is an \texttt{atlas} parameter for
$M(\R)A(\R)$, so it defines a nonzero, limit of discrete series
representation $\sigma$ of $M(\R)A(\R)$ by the preceding discusion.
Let $P(\R)=M(\R)A(\R)N(\R)$ be a parabolic subgroup of $G(\R)$ let
$I(\xi,\Lambda)$ be the induced representation
$\mathrm{Ind}_{P(\R)}^{G(\R)}(\sigma)$.  The composition factors of
$I(\xi,\Lambda)$ are independent of the choice of parabolic subgroup.
It is convenient to choose $P(\R)$ satisfying $\mathrm{Re}\langle d\Lambda,\ch\alpha\rangle\ge 0$ for 
all real roots $\alpha$. 
With this choice we define 
$\pi(\xi,\Lambda)$ to be the {\it cosocle}  of $I(\xi,\Lambda)$, i.e.  the direct sum of the irreducible quotients of $I(\xi,\Lambda)$.

\begin{prop}
The lowest $K$-types of $I(\xi,\Lambda)$ are all contained in $\pi(\xi,\Lambda)$. Furthermore, if $(x,\Lambda)$ is final, then $\pi(\xi,\Lambda)$ is irreducible. 
\end{prop}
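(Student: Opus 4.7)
This proposition is essentially the Langlands classification theorem restated in the \texttt{atlas} dictionary, so the strategy is to verify the translation rather than prove anything genuinely new. The plan is to first unpack the construction: the Levi $M(\R)A(\R)$ is the centralizer of the split part of $H(\R,\tau)$; the character $\Lambda\otimes\gamma$ produces a nonzero limit-of-discrete-series representation $\sigma$ on $M(\R)A(\R)$ via the compact-Cartan case treated earlier in the section (using the dominance condition \eqref{e:doma}--\eqref{e:domb}); and the choice $\mathrm{Re}\langle d\Lambda,\check\alpha\rangle\geq 0$ on positive real roots places $I(\xi,\Lambda)$ in the standard Langlands cosocle setup, so that $\pi(\xi,\Lambda)$ is the Langlands quotient of a standard induced module with tempered inducing data.

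For the first assertion, I would invoke Vogan's analysis of lowest $K$-types of standard modules from \cite{VoganGreenBook}. By Frobenius reciprocity combined with the branching from $K\cap M$ to $K$, every lowest $K$-type of $I(\xi,\Lambda)$ is obtained from the unique lowest $(K\cap M)$-type of the limit-of-discrete-series $\sigma$, and the dominance hypothesis on the $A(\R)$-character forces each such $K$-type to lie in the Langlands cosocle (not in any proper submodule) and to appear there with multiplicity one. This is precisely the statement that the lowest $K$-types of $I(\xi,\Lambda)$ are all contained in $\pi(\xi,\Lambda)$.

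For the irreducibility statement, I would analyze the possible sources of reducibility of $I(\xi,\Lambda)$. At a real root $\alpha$ with $\mathrm{Re}\langle d\Lambda,\check\alpha\rangle>0$, the standard Langlands argument already gives irreducibility of the cosocle. The only remaining source of reducibility is at singular real roots, i.e. simple $\tau$-real $\alpha$ with $\langle d\Lambda,\check\alpha\rangle=0$: here $I(\xi,\Lambda)$ acquires a potential reducibility governed by an $\SL(2,\R)$-type Knapp--Stein analysis on a rank-one real Levi, and this analysis is encoded in the parity condition of \cite[Theorem~6.3(5)]{HermitianFormsSMF}. The finality hypothesis \eqref{e:domc} is designed precisely so that the parity condition is satisfied at every singular real root; in this regime the Knapp--Stein intertwining operator acts by a nonzero scalar on the cosocle, so $\pi(\xi,\Lambda)$ is irreducible.

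The main technical obstacle will be the bookkeeping: matching the \texttt{atlas} $\rho$-cover convention for $\Lambda$, the $\gamma$-twist from \eqref{e:gamma} which converts $\Lambda$ into the genuine character parametrizing $\sigma$, and the precise form of the parity condition in \cite{HermitianFormsSMF} with the classical Langlands--Knapp--Zuckerman formulation. Once these normalizations are reconciled, both assertions reduce to well-known statements, and the argument amounts to checking that each ingredient of the \texttt{atlas} construction lines up with its classical counterpart.
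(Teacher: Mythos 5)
You have correctly recognized that this Proposition is not actually proved in the paper: it is a translation into \texttt{atlas} language of standard results, essentially \cite[Chapters 6 and 8]{VoganGreenBook} and \cite[Theorem 6.3]{HermitianFormsSMF}, so your overall strategy of reducing both assertions to those sources is the intended one. Your treatment of the first assertion is adequate: with $P(\R)$ chosen so that $\mathrm{Re}\langle d\Lambda,\ch\alpha\rangle\ge 0$ on all real roots, the statement that the lowest $K$-types of $\mathrm{Ind}_{P(\R)}^{G(\R)}(\sigma)$ occur (with multiplicity one) in the cosocle is exactly Vogan's lowest-$K$-type theory for standard modules with weakly dominant continuous parameter.

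The irreducibility argument, however, has a genuine gap: your case analysis over the real roots is not exhaustive. You split them into roots with $\mathrm{Re}\langle d\Lambda,\ch\alpha\rangle>0$, where you invoke ``the standard Langlands argument,'' and roots with $\langle d\Lambda,\ch\alpha\rangle=0$, where you invoke the parity condition. This omits the real roots with $\langle d\Lambda,\ch\alpha\rangle\neq 0$ but $\mathrm{Re}\langle d\Lambda,\ch\alpha\rangle=0$ --- the tempered and mixed directions, which occur for most final parameters. For a purely tempered parameter the module is unitarily induced, the cosocle is all of $I(\xi,\Lambda)$, and the Langlands quotient theorem says nothing about its irreducibility; yet finality imposes no condition at these roots. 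The missing input is the Knapp--Stein/Knapp--Zuckerman theory for induction from limits of discrete series: reducibility of $\mathrm{Ind}(\sigma\otimes i\nu)$ is governed by an $R$-group built only from real roots orthogonal to the entire continuous parameter, i.e.\ precisely the singular real roots that the finality hypothesis controls. Without this (or a direct citation of \cite[Theorem 6.3]{HermitianFormsSMF}, which packages it), the claim that ``the only remaining source of reducibility is at singular real roots'' is unsupported. Relatedly, the sentence ``at a real root with $\mathrm{Re}\langle d\Lambda,\ch\alpha\rangle>0$ the standard Langlands argument already gives irreducibility of the cosocle'' is false as a root-by-root statement: uniqueness of the Langlands quotient requires strict dominance at all real roots simultaneously, not positivity at one of them.
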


Using the language of representations of strong real forms (Section \ref{sec:def_representations}), we write $I(x, \Lambda)$ and $\pi(x, \Lambda)$ for  $[\xi, I(\xi, \Lambda)]$ and $[\xi, \pi(\xi, \Lambda)]$ respectively. 

Here are two types of equivalence of parameters we will need.
Suppose $(x,\Lambda)$ is an \texttt{atlas} parameter and $w\in W$. Let $w\Lambda$ be the character of the $\rho$-cover
of the real form of $H$ defined by $w\times x$ (cf. Section \ref{s:crosscayleykgb}).
If $x \in \X_\tau$ and $w$ preserves the positive $\tau$-imaginary roots, then \eqref{e:dom} holds, and $(w\times x,w\Lambda)$ is a valid \texttt{atlas} parameter.

\begin{lemma}
  \label{l:independent}
Suppose $(x,\Lambda)$ is an \texttt{\textup{atlas}} parameter, with $x \in \X_\tau$.
Let $\Psi_i$ be the set of positive $\tau$-imaginary roots.
\begin{enumerate}
\item  Suppose $w\in W$ satisfies
$w\Psi_i=\Psi_i$. Let $\gamma(w)$ be the character $e^{w\rho_r-\rho_r}$ of $H(\R,\tau)$.
Then $\pi(x,\Lambda)$ is isomorphic to $\pi(w\times x,w\Lambda\otimes \gamma(w))$.
\item Suppose $\alpha$ is $\tau$-complex, and simple for $\Psiint(\Lambda)$.
  Then $\pi(s_\alpha\times x,s_\alpha \Lambda)$ is isomorphic to~$\pi(x,\Lambda)$.
\end{enumerate}
\end{lemma}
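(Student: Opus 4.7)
My plan is to verify both equivalences directly from the definition of $\pi(x,\Lambda)$, using the character formula for the standard module $I(x,\Lambda)$ and the uniqueness of the Langlands classification for final atlas parameters.

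For part~(1), I first need to check that $(w\times x, w\Lambda\otimes\gamma(w))$ is a well-formed parameter attached to the same involution as $(x,\Lambda)$. The assumption $w\Psi_i=\Psi_i$ yields $w\tau w^{-1}=\tau$ by a routine argument on the action of $W$ on the root datum, so that $w\times x\in\mathcal X_\tau$; the dominance conditions~\eqref{e:dom} for $w\Lambda\otimes\gamma(w)$ are then automatic, given that they hold for $(x,\Lambda)$. I would first treat the relatively compact case, where $\Psi_r=\emptyset$ and $\gamma(w)=\mathbf{1}$: the isomorphism $\pi(x,\Lambda)\cong \pi(w\times x,w\Lambda)$ is forced by the $W(G(\R),H(\R,\tau))$-equivariance of the Harish-Chandra character formula~\eqref{character_formula}, since both parameters produce identical alternating sums. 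The general case then reduces to this one by restriction to the Levi $M(\R)A(\R)$ attached to the real roots, in which $H(\R,\tau)$ is relatively compact; the twist $\gamma(w)=e^{w\rho_r-\rho_r}$ is exactly the correction needed so that replacing $\Lambda$ by $w\Lambda\otimes\gamma(w)$ at the ambient level corresponds, after tensoring with $\gamma$ (cf.~\eqref{e:gamma}), to replacing $\Lambda\otimes\gamma$ by $w(\Lambda\otimes\gamma)$ at the Levi level. Parabolic induction and passage to the cosocle then preserve the isomorphism.

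For part~(2), my claim is that the cross action by a simple integral $\tau$-complex root preserves the standard module. I would verify this by picking a representative $\sigma_\alpha\in\Norm_G(H)$ of $s_\alpha$ from the Tits group and observing that conjugation by $\sigma_\alpha$ transports all the data producing $I(x,\Lambda)$ (the Cartan decomposition, the choice of parabolic compatible with the dominance of $\Lambda$, the tensor factor $\gamma$) to the corresponding data producing $I(s_\alpha\times x, s_\alpha\Lambda)$. The hypothesis that $\alpha$ is simple for $\Psiint(\Lambda)$ ensures that no parity condition or positivity requirement is violated after this conjugation, so the resulting parameter is again well-formed and produces the same cosocle; uniqueness in the Langlands classification then gives the isomorphism.

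The main technical obstacle will be the bookkeeping in part~(1) in the non relatively compact case: carefully matching up the $\rho$-, $\rho_r$-, and $\rho_i$-covers of $H(\R,\tau)$, tracking the genuine character $\gamma$, and confirming the normalizations used in parabolic induction. Identifying $\gamma(w)=e^{w\rho_r-\rho_r}$ as exactly the correct correction amounts to the observation that $w\rho_r-\rho_r$ is precisely the shift needed to transport the $\rho_r$-cover through the cross action of $w$; once this is in place, the argument goes through simultaneously at the Levi and ambient levels.
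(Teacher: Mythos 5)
Your proof fills in an argument the paper itself does not give: the paper's entire proof of this lemma is a citation of \cite[Lemma 8.24]{AV1} for part (1), with part (2) declared an immediate consequence. Attempting a self-contained argument is therefore reasonable, but your scaffolding for part (1) rests on a false step. The claim that $w\Psi_i=\Psi_i$ forces $w\tau w^{-1}=\tau$ is not a routine consequence of the hypothesis: preserving the positive imaginary roots constrains $w$ only on the span of the imaginary roots and says nothing about its interaction with $\tau$ elsewhere. In the extreme case $\Psi_i=\emptyset$ (e.g.\ a fiber with only complex roots, as happens for ``complex'' inner classes) every $w\in W$ satisfies the hypothesis while very few normalize $\tau$. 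The paper's own phrasing just before the lemma --- ``$w\Lambda$ is the character of the $\rho$-cover of the real form of $H$ defined by $w\times x$'' --- explicitly allows the real torus to change to $H(\R,w\tau w^{-1})$. This breaks your reduction as written: the Levi attached to $w\times x$ is the conjugate $nMn^{-1}\cdot nAn^{-1}$ for $n$ a representative of $w$, so one cannot ``restrict to $M(\R)A(\R)$''; one must transport the entire inducing datum (Cartan, parabolic, choice of positive real roots, the $\rho_r$- and $\rho_i$-covers) by $\int(n)$. Your relatively compact base case is also vacuous rather than a base case: if all roots are $\tau$-imaginary then $w\Psi_i=\Psi_i$ means $w\Delta^+=\Delta^+$, i.e.\ $w=1$.

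What does survive --- and is in fact the whole content of the statement --- is your closing observation. The cross action is by definition transport of structure by $\int(n)$, which preserves the equivalence class $[\xi,\pi(\xi,\Lambda)]$ as a representation of a strong real form; the only thing to compute is which \texttt{atlas} parameter the transported module carries, and the twist $\gamma(w)=e^{w\rho_r-\rho_r}$ is precisely the (unitary part of the) discrepancy between the character $\gamma$ of \eqref{e:gamma} formed from $\Psi_r$ and the one formed from the transported positive real system. If you reorganize part (1) around that single computation, rather than around a common torus and common Levi that need not exist, the argument goes through; your part (2), conjugation by a Tits representative $\sigma_\alpha$, is the same transport-of-structure principle and is fine, though you should note it is not literally an application of (1) with $w=s_\alpha$, since a complex reflection need not preserve $\Psi_i$.
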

For (1) see \cite[Lemma 8.24]{AV1}, and (2) is an immediate consequence of (1).

In case (2) we write $s_\alpha(x,\Lambda)=(s_\alpha\times x,s_\alpha\Lambda)$.

\subsection{Refined Local Langlands correspondence}

\label{sec:pin_down_rep}

Now suppose $(\phi,\chi)$ is a complete Langlands parameter. We define the corresponding representation $\pi(\phi,\chi)$ as follows.

After conjugating by $\ch G$ we may assume $\phi$ is in standard form
(Definition \ref{d:stdform}). 

Set
$\ch\tau=\ch\tau(\phi)=\int(\phi(j))$ and let $\tau=\tau(\phi)$ be the
dual automorphism of $H$.  This defines the real Cartan subgroup
$H(\R,\tau)$.

Let $y_{b,\ch\tau}$ be the basepoint in $\ch\X_{\ch\tau}$ (Section \ref{sec:basepoint}). Then \mbox{$\egr{H}=\langle \ch H,y_{b,\ch\tau}\rangle$}  is an $E$-group for~$H$ (Section~\ref{sec:duality_tori}) corresponding to the double cover~$\widetilde{H}(\R,\tau)_{\rho}$ of  $H(\mathbb{R}, \tau)$, and
the image of $\phi$ is contained in~$\egr{H}$. 
Now apply Section  \ref{sec:duality_tori} to the parameter~$\phi\colon \mathbf{W}_{\R}\to\egr{H}$ to define a genuine character~$\Lambda_\phi$
of the double cover $\widetilde{H}(\R,\tau)_{\rho}$.

Next, by Proposition~\ref{prop:bij_char_KGB_4}, the character $\chi$ determines a $\texttt{KGB}$ element~$x$ in $\mathcal{X}_{\tau}$.
Then $(x,\Lambda_\phi)$ is an \texttt{atlas} parameter in the sense of Definition \ref{d:atlasparam}. It is final (see Remark~\ref{remark:noncompact_singular} and \cite[Proposition 13.12]{ABV}).
Finally, given $\xi\in \X_x$ we define the $(\g,K_\xi)$-module~$\pi(\xi,\Lambda_\phi)$ using Section~\ref{s:atlasparam}.
We define $I(\phi, \chi)$ and $\pi(\phi,\chi)$ to be the equivalence classes~$[\xi, I(\xi, \Lambda_\phi)]$ and~$[\xi, \pi(\xi,\Lambda_\phi)]$ respectively. These are independent
of the choice of $\xi\in \X_x$.

\begin{theo}[Refined local Langlands correspondence] \label{thm:complete_langlands}
Let $G$ be a complex connected reductive group,
equipped with an inner class $\gamma$ of real forms
and a pinning $\mathcal{P}$.
Use those to define notions of complete, final Langlands parameters,
as in \S \ref{sec:def_complete_parameters},
and representations of strong real forms of $G$
\textup{(}in the  inner class $\gamma$\textup{)}, as in \S \ref{sec:def_representations}.
\begin{enumerate}[(1)]
\item Suppose $(\phi,\chi)$ is a complete Langlands parameter. Then
  $\pi(\phi, \chi)$
depends only on the equivalence class of $(\phi, \chi)$.
\item The correspondence $(\phi, \chi) \rightsquigarrow \pi(\phi, \chi)$
induces a bijection between equivalence classes of  complete, final Langlands parameters
and equivalence classes of irreducible representations of strong real forms of $G$.
\end{enumerate}
\end{theo}

This follows from \cite{Algorithms}  and \cite{ComputingGlobalCharacters}. We discuss a few details which are
important for our application.


\subsubsection{Independence of choices}  \label{sec:independence}
Suppose $(\phi,\chi)$ is a complete Langlands parameter.
Our construction of $\pi(\phi, \chi)$
began by picking a conjugate of $\phi$ in standard form. We need to show that the equivalence class of $\pi(\phi,\chi)$
is independent of this choice.

So assume $\phi$ is in standard form.
We need to show that replacing $(\phi,\chi)$ with $(w\phi,w\chi)$,
where $w\phi$ is also in standard form, does not change the equivalence class of $\pi(\phi,\chi)$. 
Let $\tau=\tau(\phi)$. The assumption of standard form implies that $w$ takes the positive $\tau$-imaginary roots to themselves. 

Write $(x,\Lambda)$ for the \texttt{atlas} parameter attached to $(\phi,\chi)$ in Section \ref{sec:pin_down_rep}.
By the discussion in Section \ref{sec:basepoint}, $w$ takes the basepoint in the fiber of $\phi(j)$ 
to the basepoint in the fiber of $w\phi(j)$. Consequently, examining the construction
we see that $(w\phi,w\chi)$ then defines the \texttt{atlas} parameter $(w \times x, w\Lambda\otimes \gamma(w))$. 
Therefore the result follows from  Lemma \ref{l:independent}.

\subsubsection{Distinction inside $L$-packets} \label{sec:L_packets}
Given an $L$-homomorphism $\phi$, we define the (`large') $L$-packet attached to $\phi$ to be
\[ \Pi(\phi)= \left\{ \, \pi(\phi, \chi) \ : \ \chi \in \Char(\widetilde{\mathbb{S}}_\phi)\,\right\}.\]
There is no redundancy in that description of the large $L$-packet:

\begin{lemm}
The map $\chi \mapsto \pi(\phi, \chi)$ is a bijection between $\Char(\widetilde{\mathbb{S}}_\phi)$ and $\Pi(\phi)$.
\end{lemm}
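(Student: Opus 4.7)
The plan is short: surjectivity is built into the definition of $\Pi(\phi)$, so everything reduces to injectivity of $\chi \mapsto \pi(\phi,\chi)$. The natural strategy is to invoke the refined Langlands bijection (Theorem~\ref{thm:complete_langlands}(2)), after verifying that distinct characters $\chi$ yield complete Langlands parameters $(\phi,\chi)$ which are inequivalent under $\chGalg$-conjugation.

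First I would set up notation. After conjugating in $\ch{G}$, assume $\phi$ is in standard form and let $\tau=\tau(\phi)$. By Section~\ref{sec:pin_down_rep} the construction of $\pi(\phi,\chi)$ goes through the \texttt{atlas} parameter $(\mathcal{E}_\phi(\chi),\Lambda_\phi)$, where the character $\Lambda_\phi$ of $\widetilde{H}(\R,\tau)_{\rho}$ depends only on $\phi$, and $\mathcal{E}_\phi\colon\Char(\tSphi)\hookrightarrow \X_\tau$ is the injection of Proposition~\ref{prop:bij_char_KGB_param}. Since $(\phi,\chi)$ is always final, as recorded in Section~\ref{sec:pin_down_rep}, Theorem~\ref{thm:complete_langlands}(2) applies.

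Next I would check that for $\chi\neq\chi'$ in $\Char(\tSphi)$, the complete parameters $(\phi,\chi)$ and $(\phi,\chi')$ are non-equivalent. Any equivalence would be implemented by some $\ch{\tilde g}\in\chGalg$ such that $\mathrm{int}(\mathrm{proj}(\ch{\tilde g}))\phi=\phi$ and $\Char(\ch{\tilde g})(\chi)=\chi'$. The first condition forces $\ch{\tilde g}\in\ch{G}^{\alg}_\phi$; but a group always acts trivially by conjugation on its own component group, so $\ch{G}^{\alg}_\phi$ acts trivially on $\tSphi$ and hence trivially on $\Char(\tSphi)$. This forces $\chi=\chi'$, contradicting our assumption. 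Consequently $(\phi,\chi)$ and $(\phi,\chi')$ are inequivalent complete final Langlands parameters, and Theorem~\ref{thm:complete_langlands}(2) yields $\pi(\phi,\chi)\neq \pi(\phi,\chi')$.

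The only point requiring any thought is the triviality of the inner action of $\ch{G}^{\alg}_\phi$ on $\tSphi$; once that is in hand, the lemma follows immediately from the already-stated refined Langlands correspondence, so I do not anticipate any real obstacle. An alternative route, avoiding the appeal to Theorem~\ref{thm:complete_langlands}(2), would be to observe that distinct characters $\chi$ give \texttt{atlas} parameters with different \texttt{KGB} components $\mathcal{E}_\phi(\chi)\in\X_\tau$ (by injectivity in Proposition~\ref{prop:bij_char_KGB_param}), and to quote directly the fact from \cite{Algorithms} that final \texttt{atlas} parameters are a complete set of invariants for irreducible representations of strong real forms.
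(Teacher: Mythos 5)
Your proposal has a circularity problem, and your fallback route skips exactly the point that needs work.

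The main route invokes Theorem~\ref{thm:complete_langlands}(2) to get injectivity. But this lemma is itself one of the ingredients the paper supplies toward that theorem: the subsections on ``distinction inside $L$-packets'' (this lemma) and ``disjointness of $L$-packets'' together are what establish the injectivity half of the refined correspondence (note the ``therefore'' opening \S\ref{sec:disjointness}, which presupposes the present lemma). So you cannot quote the theorem here without begging the question. Your verification that $(\phi,\chi)$ and $(\phi,\chi')$ are inequivalent as complete parameters for $\chi\neq\chi'$ (because $\ch{G}^{\alg}_\phi$ acts trivially on its own abelian component group) is correct but does not by itself say anything about the representations.

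Your alternative route is closer to the right idea but elides the crucial subtlety. It is true that distinct characters give distinct \texttt{KGB} elements $x=\E_\phi(\chi)$, $x'=\E_\phi(\chi')$ in $\X_\tau$. However, \texttt{atlas} parameters classify representations of strong real forms only up to the equivalence of \S\ref{sec:def_representations}, i.e.\ up to $G$-conjugation, and Lemma~\ref{l:independent} shows that genuinely different parameters can yield equivalent representations. So one must rule out that $x\neq x'$ become identified after conjugation. This is precisely what the paper's proof does: if $\pi(\phi,\chi)=\pi(\phi,\chi')$ then $x$ and $x'$ are $G$-conjugate, hence $x'=w\times x$ for some $w$ in the imaginary Weyl group $W_{i,\tau}$ (transitivity on $\X_\tau[x]$); the invariance of the character formula \eqref{character_formula} then forces $w$ to lie in the real Weyl group $W(G(\R,\xi),H(\R,\tau))$, which is exactly the stabilizer of $x$ in $W_{i,\tau}$, whence $x'=x$ and $\chi'=\chi$. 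Without this step (or an equivalent precise citation of the equivalence relation on final \texttt{atlas} parameters), the argument is incomplete.
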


\begin{proof} The only thing to prove is the injectivity. The argument generalizes \cite[\S 5]{DiscreteSeriesSigns}.
We may assume $\phi$ is in standard form and set~$\tau=\tau(\phi)$.
Suppose we are given $\chi, \chi'$ in $\Char(\widetilde{\mathbb{S}}_\phi)$.
Let $x, x'$ be the  \texttt{KGB} elements in $\mathcal{X}_\tau$
that correspond to $\chi, \chi'$ under Proposition \ref{prop:bij_char_KGB_4},
and let $\xi, \xi'$ be representatives of $x,x'$ in $\widetilde{\mathcal{X}}$.
Assume $\pi(\phi, \chi)=\pi(\phi, \chi')$;
then $x$ and $x'$ must be $G$-conjugate. Since the Weyl group $W_{i, \tau}$ acts transitively on $\mathcal{X}_{\tau}[x]$
(see \S \ref{sec:transitivity_action_srf}),
there must exist $w \in W_{i, \tau}$ such that $\xi'=w\xi$.
Then $\pi(\phi, \chi)=\pi(\phi, \chi')$ means
$[\xi, \pi(\xi, \Lambda_\phi)] = [\xi', \pi(\xi', {\Lambda_\phi})] =  [\xi, \pi(\chi, {(w^{-1}\Lambda_\phi})]$.
Now, formula \eqref{character_formula} is invariant under the real Weyl group
$W(G(\mathbb{R}, \xi), H(\mathbb{R}, \tau))$.
In fact  $[\xi, \pi(\chi, (w^{-1}\Lambda_\phi)] = [\xi,  \pi(\chi, {\Lambda_\phi})]$
if and only if $w\in W(G(\mathbb{R}, \xi), H(\mathbb{R}, \tau))$:
see \cite[Definitions 5.7, 5.9 and Lemma 13.4]{ComputingGlobalCharacters}.
But this finite group is also the stabilizer of $x$ for the action of~$W_{i, \tau}$ on~$\mathcal{X}_{\tau}[x]$,
see \S\ref{sec:transitivity_action_srf}.
Thus if $\pi(\phi, \chi)=\pi(\phi, \chi')$, then necessarily $x'=wx = x$, and~$\chi=\chi'$. \end{proof}

\subsubsection{Disjointness of $L$-packets} \label{sec:disjointness}
In order to prove that the map taking~$(\phi, \chi)$ to $\pi(\phi, \chi)$ induces an injection on conjugacy classes,
it is therefore enough to see that the $L$-packets $\Pi(\phi)$ and $\Pi(\phi')$ are disjoint
if $\phi$, $\phi'$ are nonconjugate $L$-homomorphisms.
For this it is useful to relate these packets with the original $L$-packets of  \cite{Borel, Contragredient}.

The definition of $L$-packets according to Langlands involves a fixed real form.
Thus, fix a Cartan involution~$\theta$ of~$G$ in the given inner class, and set $K=G^\theta$. Replace~$\phi$ by a conjugate in standard form; given $\chi \in  \Char(\widetilde{\mathbb{S}}_\phi)$, let~$x_\chi$ be the corresponding \texttt{KGB} element. Define
\[
\Pi(\phi, \theta)= \left\{\pi(\phi, \chi) :  \chi \in \Char(\widetilde{\mathbb{S}}_\phi)
\text{ and $x_\chi \in \mathcal{X}$ has a representative $\xi \in \widetilde{\mathcal{X}}$ such that }
\theta_{\xi} = \theta\right\}.
\]
This can be viewed as a set of irreducible $(\mathfrak{g}, K)$-modules.

To compare with the classical notion of $L$-packet for the single real form $G(\mathbb{R}, \theta)$, write
\[ \Pi_{\mathrm{classical}}(\phi, \theta)\]
for the set of irreducible $(\mathfrak{g}, K)$-modules
which Langlands defined in \cite{Langlands_CIRRAG}. See \cite{Borel} and  \cite[\S 6]{Contragredient}.

Given an involution $\theta$, the $L$-packets $\Pi_{\mathrm{classical}}(\phi, \theta)$ and  $\Pi_{\mathrm{classical}}(\phi', \theta)$
are disjoint as soon as $\phi$ and $\phi'$ are not conjugate.
Thus the disjointness of our packets $\Pi(\phi)$ and $\Pi(\phi')$ follows from:

\begin{lemm} \label{lemm:packets}
The packet $\Pi(\phi, \theta)$ is contained in $\Pi_{\mathrm{classical}}(\phi, \theta)$.
\end{lemm}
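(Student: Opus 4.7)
The plan is to unwind both constructions and match their outputs step by step, exploiting the fact that both proceed by parabolic induction from the same cuspidal parabolic with essentially the same inducing data, differently encoded.

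First I would replace $\phi$ by an equivalent parameter in standard form (Definition~\ref{d:stdform}); this does not affect either packet, since both are defined on $\ch{G}$-conjugacy classes. Fix $\chi\in\Char(\widetilde{\mathbb{S}}_\phi)$ whose associated \texttt{KGB} element $x_\chi\in\mathcal{X}_{\tau}$ admits a representative $\xi$ with $\theta_\xi=\theta$, and consider the corresponding real Cartan $H(\R,\tau)\subset G(\R,\theta)$. Let $A$ be the identity component of $H^{-\tau}$ and let $M(\R)A(\R)$ be its centralizer, so that $P(\R)=M(\R)A(\R)N(\R)$ is a cuspidal parabolic. This is exactly the Levi structure that appears on the Langlands side: $A$ records the $\tau$-split part of $H$, which is where the positive real character component of $\phi$ lives after the classical decomposition $\phi=\phi_t\cdot\nu$ into tempered part and positive real part.

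Next I would identify the inducing representation of $M(\R)A(\R)$ produced by the atlas recipe with the one Langlands prescribes. Writing $\phi=\phi(\lambda,y)$ as in Lemma~\ref{l:stdLhom} and splitting $\lambda$ into its $\tau$-imaginary and $\tau$-real components, the restriction of $\phi_t$ to the sub-$L$-group $\langle\ch M,\phi(j)\rangle$ is a tempiric parameter for $M$ in the sense of \S\ref{sec:intro_real_tempered}, while the $\tau$-real part produces a character $e^{\nu}$ of $A(\R)$. Applying the $E$-group dictionary of \S\ref{sec:duality_tori} to the $M$-restriction, one sees that the resulting genuine character of $\widetilde{H}(\R,\tau)_\rho$ after the $\gamma$-twist of \eqref{e:gamma} is precisely the Harish-Chandra parameter of the (limit of) discrete series $\sigma$ of $M(\R)$ that Langlands assigns to $\phi_t|_{^LM}$. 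The character formula \eqref{character_formula}, compared with the standard Harish-Chandra formula, identifies the two; this is done for discrete series in \cite{DiscreteSeriesSigns}, and the limit case follows from the Zuckerman translation setup recalled in \S\ref{s:atlasparam}. Consequently $I(\xi,\Lambda_\phi)\simeq \mathrm{Ind}_{P(\R)}^{G(\R)}(\sigma\otimes e^{\nu})$ is exactly Langlands' standard module.

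The remaining point is the comparison between the atlas cosocle and Langlands' irreducible quotient. Here condition~(3) of Definition~\ref{d:stdform}, weak integral dominance of $\lambda$ on the positive $\tau$-real roots, ensures that $P(\R)$ can be chosen so that $\mathrm{Re}\langle d\Lambda_\phi,\ch{\alpha}\rangle\ge 0$ on positive real roots; this is Langlands' positivity, under which the cosocle and the Langlands quotient coincide. The main obstacle is the middle step: reconciling the $E$-group parametrization of $\Lambda_\phi$ (with its canonical $\rho$-cover, the $\gamma$-twist, and the sign conventions built into $\ch\tau=-d\tau^\top$) with the Harish-Chandra parameter of $\sigma$, especially in the presence of limits of discrete series on the Levi where reducibility and the cosocle convention interact nontrivially. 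The necessary bookkeeping is distributed across \cite{DiscreteSeriesSigns}, \cite{ComputingGlobalCharacters} and \cite[\S 6]{HermitianFormsSMF}, but assembling it into a clean one-to-one match of parameters is where the bulk of the verification lies.
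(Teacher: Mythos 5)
Your overall strategy is the paper's: reduce to the cuspidal Levi $M(\R)A(\R)=\Cent_{G(\R)}(A)$ attached to the $\tau$-split part of $H$, match the standard module $I(\xi,\Lambda_\phi)$ with a Langlands standard module induced from that Levi, and invoke compatibility of the classical correspondence with parabolic induction. The paper does exactly this, viewing $\phi$ as a parameter into the dual Levi $\phantom{}^dS$ of $S(\R)=M(\R)A(\R)$ and then into the smaller standard Levi $\phantom{}^dM$ of \cite[Definition 6.15]{Contragredient}, and concluding by double induction.

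The genuine gap is the one you flag yourself: you try to prove that the atlas-produced $\sigma$ is \emph{precisely} the member of the Levi's $L$-packet that Langlands assigns to $(\phi,\chi)$, via a character-formula and $E$-group bookkeeping that you acknowledge you have not assembled. That fine matching is not needed for this lemma and is the hard part of the whole refined correspondence (it is essentially the content of \cite{DiscreteSeriesSigns} in the discrete series case). The statement only asserts containment of \emph{packets}, so it suffices to know that $\sigma$ lies in the correct $L$-packet of $S(\R)$ attached to $\phi|_{\phantom{}^dS}$ --- a packet-level fact that follows from comparing infinitesimal characters and the central character on $A(\R)$ (both of which are read off directly from the $E$-group dictionary of \S\ref{sec:duality_tori}, since $d\Lambda_\phi=\lambda$ by construction), without any of the sign or covering bookkeeping. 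A second, smaller issue: your appeal to Zuckerman translation to handle limits of discrete series on the Levi does not by itself connect to the \emph{classical} packet, which \cite{Contragredient} describes by induction from genuine discrete series of a possibly smaller Levi; this is why the paper inserts the intermediate Levi $\phantom{}^dM\subset\phantom{}^dS$ and argues by induction in stages. If you weaken your step (a) to packet membership and add the double-induction step, your argument closes and coincides with the paper's.
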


In fact these two packets are equal, but we will not prove it.

\begin{proof}[Proof of Lemma \ref{lemm:packets}]
We may assume $\phi$ is in standard form.
Consider an element of the $L$-packet~$\Pi(\phi, \theta)$;
write it as $[\xi, X]$, where $\xi \in \widetilde{\mathcal{X}}$ is a representative of $x_\chi$
such that $\theta_\chi=\theta$.
Let  $\phantom{}^d{S} \subset \LG$ be the subgroup of $\LG$ generated by $\ch{\delta}$ and the centralizer of the identity component of $H^{-\tau(\phi)}$, as in Section~\ref{s:atlasparam}. 
Then $\phantom{}^d S$ can be viewed as the $L$-group
for a $\theta$-stable Levi subgroup $S(\mathbb{R}, \theta)$ of $G(\mathbb{R}, \theta)$,
and we can view $\phi$ as a parameter $\mathbf{W}_{\mathbb{R}}\to\phantom{}^d{S}$.
In fact $S(\R)=S(\mathbb{R}, \theta)$ is the Levi subgroup $M(\mathbb{R})A(\R)$
used in \S\ref{s:atlasparam},
and the limit of discrete series representation~$\sigma$ used there
must be in the $L$-packet for $S(\mathbb{R}, \theta)$ attached to~$\phi$.
Now consider the standard Levi subgroup~$\phantom{}^dM$
constructed in \cite[Definition 6.15]{Contragredient}.
Then~$\phantom{}^dM \subset \phantom{}^d S$, the parameter~$\phi$ factors through $\phantom{}^dM$, and the $L$-packet for $S(\R)$ attached to~$\phi$ is obtained by parabolic induction from a discrete series $L$-packet
for a Levi subgroup of $S(\mathbb{R})$ with $L$-group~$\phantom{}^d M$.
By the compatibility with parabolic induction of~\cite[Section 6]{Contragredient},
double induction,
and the description of $\Pi_{\mathrm{classical}}(\phi, \theta)$ by parabolic induction from a discrete series $L$-packet for $M$,
 the $(\mathfrak{g}, K_{\xi})$-module~$\pi(\xi, \Lambda_\phi)$
must be in~$\Pi_{\mathrm{classical}}(\phi, \theta)$.
\end{proof}


\section{Tempiric parameters and representations}
\label{sec:ric_temp}
\label{sec:real_inf_char}
\label{sec:alternate_tempiric}

In the rest of the paper, we turn to the relationship
between the local Langlands correspondence and lowest $K$-types.
This is a preparatory section: it  collects basic remarks and definitions,
and recalls a theorem of Vogan  which is crucial for Section \ref{sec:r_groups}.

\subsection{Tempiric representations and $L$-packets}

Recall we discussed the notion of tempiric parameters and representations in Section \ref{sec:intro_real_tempered}.
An $L$-homomorphism $\phi$ is said to be tempiric if its restriction to $\R_+$ is trivial, and associated to any $L$-parameter  $\phi$ is
a tempiric one $\phi_c$.
A representation is tempiric if it is irreducible, tempered, and has real infinitesimal character.
Here is  some more detail on these definitions, and the relation between them.

First we consider representations, and see that the class of tempiric representations can be easily characterized in terms of parabolic induction from discrete series.

Let $\xi$ be a strong involution of $G$, let $\theta$ be the corresponding Cartan involution and let $K=G^\theta$ be the fixed points;
and let $G_\mathbb{R}$ be a real form of $G$ with maximal compact subgroup $K_\mathbb{R}=K \cap G_{\mathbb{R}}$.
Let $P_{\mathbb{R}}$ be a cuspidal parabolic subgroup of $G_\mathbb{R}$, with Levi factor $L_{\mathbb{R}}$,
and let $\sigma$ be a discrete series representation of $L_{\mathbb{R}}$
whose central character is \emph{trivial on the split component $A_{\mathbb{R}}$ of $L_{\mathbb{R}}$}.
Consider the induced representation $\mathrm{Ind}_{P_{\mathbb{R}}}^{G_{\mathbb{R}}}(\sigma)$;
this standard representation is tempered and splits into a finite sum of irreducible submodules. Each of these irreducible factors is then tempiric.

Conversely, if $(\xi, X)$ is a tempiric representation of a strong real form of $G$,
then $\pi$ is isomorphic with the underlying $(\mathfrak{g}, K)$-module of an irreducible factor
of some induced representation~$\mathrm{Ind}_{P_{\mathbb{R}}}^{G_{\mathbb{R}}}(\sigma)$,
where $P_{\mathbb{R}}\subset G_{\mathbb{R}}$ is a cuspidal parabolic subgroup
and $\sigma$ is a discrete series representation of $L_{\mathbb{R}}\subset P_{\mathbb{R}}$
whose central character is trivial on the split component~$A_{\mathbb{R}}$.

Next, we consider tempiric Langlands parameters. Let $\phi$ be an $L$-homomorphism aligned with~$\ch H$. Use Lemma~\ref{l:stdLhom} to write
\begin{equation}\label{canonical_form_phi_64}
\begin{cases}
 \phi(z) = z^{\lambda} \overline{z}^{\mathrm{Ad}(y)\lambda}  \quad z \in \mathbb{C}^\times \\
 \phi(j)  = e^{-i\pi\lambda} y
 \end{cases}
\end{equation}
where $\lambda \in\ch{\mathfrak{h}}$ and
$y \in \mathrm{Norm}_{\lgr{G}}(\ch{H}) \setminus\ch{H}$ satisfies $y^2 = e^{2i\pi\lambda}$.

Let $\ch{\tau}$ be the involution $\mathrm{int}(y)$ of $\ch{H}$,
and let $\tau$ be the dual involution of $H$ (see \S \ref{dual_involution}).
Decompose the Lie algebra $\mathfrak{h}$ as $\mathfrak{h}=\mathfrak{t}+\mathfrak{a}$,
where $\mathfrak{t}, \mathfrak{a}$ are the $+1$ and $-1$ eigenspaces of $\tau$.
\begin{lemm} \label{lemm:char_real_tempered}
The following conditions are equivalent:
\begin{enumerate}[(i)]
\item $\phi$ is tempiric,
\item $\ch\tau(\lambda)=-\lambda$,
\item $\lambda$ restricted to $\mathfrak a$ is trivial.
\end{enumerate}
\end{lemm}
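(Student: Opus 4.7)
The plan is to check the three conditions directly using the explicit formula~\eqref{canonical_form_phi_64} for $\phi$, together with the definition of the dual involution $\ch\tau$ given in~\S\ref{dual_involution}.

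First I would establish (i)$\iff$(ii). Take $z \in \R^\times_+ \subset \C^\times$; then $z = \bar z$, so from~\eqref{canonical_form_phi_64}
\[
\phi(z) = z^{\lambda}\,z^{\ch\tau(\lambda)} = z^{\lambda + \ch\tau(\lambda)}.
\]
Since $z \mapsto z^\mu$ (for $\mu \in \ch\h$) is trivial on $\R^\times_+$ if and only if $\mu = 0$, the restriction $\phi|_{\R^\times_+}$ is trivial if and only if $\lambda + \ch\tau(\lambda) = 0$, which is precisely condition (ii).

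Second, for (ii)$\iff$(iii), recall from~\S\ref{dual_involution} that by definition $d\ch\tau = -d\tau^\top$ as endomorphisms of $\ch\h \simeq \h^*$. Hence $\ch\tau(\lambda) = -\lambda$ is equivalent to $d\tau^\top(\lambda) = \lambda$, i.e.\ $\lambda$ is fixed by $d\tau^\top$. Now decompose $\h = \t \oplus \a$ into the $+1$ and $-1$ eigenspaces of $d\tau$. For $\mu \in \h^*$ and $X \in \h$, we have $(d\tau^\top\mu)(X) = \mu(d\tau(X))$, which equals $\mu(X)$ when $X \in \t$ and $-\mu(X)$ when $X \in \a$. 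Therefore $d\tau^\top\mu = \mu$ if and only if $\mu$ vanishes on $\a$. Applied to $\mu = \lambda$, this gives the equivalence (ii)$\iff$(iii).

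There is no real obstacle here: the lemma is essentially a direct unpacking of the definitions, the content being that the sign convention in $\ch\tau = -d\tau^\top$ converts the $\ch\tau$-anti-invariance of $\lambda$ into genuine $d\tau^\top$-invariance, which in turn is characterized by vanishing on the split part $\a$ of $\h$.
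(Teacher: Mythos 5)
Your proposal is correct and follows essentially the same route as the paper: the equivalence (i)$\iff$(ii) is the same computation of $\phi$ on $\R^\times_+$ via \eqref{canonical_form_phi_64}, and (ii)$\iff$(iii) is the unpacking of $\ch\tau=-d\tau^\top$ that the paper declares immediate. The only difference is that you spell out the eigenspace calculation for $d\tau^\top$, which the paper omits; your version of that step is accurate.
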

\begin{proof}
If (i) is satisfied, then $\phi(r)=r^{\lambda+\ch{\tau}(\lambda)}$ must be $1$ for all $r>0$,
and thus $\lambda + \ch{\tau}(\lambda)$ must be zero, therefore (i) implies (ii).
Conversely, if (ii) is satisfied, then
$\phi(z) = z^{\lambda} \overline{z}^{-\lambda} = ({z}/{\overline{z}})^{\lambda}$ for $z \in \mathbb{C}^\times$,
and $\phi$ is trivial on $\mathbb{R}^+$; thus (i) and (ii) are equivalent.
The equivalence of (ii) and (iii) is immediate from the relationship between $\ch{\tau}$ and $\tau$ explained in \S \ref{dual_involution}.
\end{proof}

Here is the relation between tempiric parameters and representations.

\begin{prop}\label{characterization_real_tempered_parameters}
Let $\phi\colon \mathbf{W}_{\mathbb{R}}\to\lgr{G}$ be an $L$-homomorphism. The following are equivalent:
 \begin{enumerate}[(i)]
 \item $\phi$ is tempiric,
   \item $\Pi(\phi)$ contains a tempiric representation,
\item all representations in $\Pi(\phi)$ are tempiric.
\end{enumerate}
\end{prop}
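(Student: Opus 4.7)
The plan is to prove the chain of implications (iii)$\Rightarrow$(ii)$\Rightarrow$(i)$\Rightarrow$(iii). Throughout, after $\ch{G}$-conjugation I will assume $\phi$ is aligned with $\ch{H}$ and written in the form~\eqref{canonical_form_phi_64}, so the infinitesimal character is $\lambda\in\ch{\mathfrak{h}}$ and $\ch{\tau}=\mathrm{int}(y)$ is an involution of $\ch{H}$. Lemma~\ref{lemm:char_real_tempered} then characterizes ``$\phi$ tempiric'' by the equivalent conditions $\ch{\tau}\lambda=-\lambda$ and $\lambda|_{\mathfrak{a}}=0$. The implication (iii)$\Rightarrow$(ii) is immediate once $\Pi(\phi)\neq \emptyset$, which follows from Theorem~\ref{thm:complete_langlands} applied to the complete parameter $(\phi,\mathbf{1})$.

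For (i)$\Rightarrow$(iii), I would combine $\ch{\tau}\lambda=-\lambda$ with the integrality condition $\lambda-\ch{\tau}\lambda\in X_{\ast}(\ch{H})$ from \eqref{e:phistd} to get $2\lambda\in X_{\ast}(\ch{H})=X^{\ast}(H)$, so that $\lambda\in\tfrac{1}{2}X^{\ast}(H)\otimes\R$: the infinitesimal character of $\phi$ is real. Now fix $\chi\in \Char(\tSphi)$. By the \texttt{atlas} construction in \S\ref{sec:pin_down_rep}, $\pi(\phi,\chi)$ is a constituent of the standard module $I(\xi,\Lambda_\phi)=\mathrm{Ind}_{P(\R)}^{G(\R)}(\sigma)$, where $\sigma$ is a (limit of) discrete series of the cuspidal Levi $M(\R)A(\R)$ and $d\Lambda_\phi=\lambda$. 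Since $\lambda|_{\mathfrak{a}}=0$ by Lemma~\ref{lemm:char_real_tempered}(iii), the restriction of $\Lambda_\phi$ to the split component $A(\R)$ is trivial, so $\sigma$ is tempered and $I(\xi,\Lambda_\phi)$ is unitarily induced from a tempered representation; all its irreducible constituents, in particular $\pi(\phi,\chi)$, are therefore tempered, hence tempiric.

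For (ii)$\Rightarrow$(i), suppose $\pi=\pi(\phi,\chi)\in\Pi(\phi)$ is tempiric. The infinitesimal character of $\pi$ is by construction the $W$-orbit of $\lambda$; since $W$ preserves the real form $X^{\ast}(H)\otimes\R$ of $\ch{\mathfrak{h}}$, realness of the infinitesimal character forces $\lambda\in X^{\ast}(H)\otimes\R$, and hence $\ch{\tau}\lambda\in X^{\ast}(H)\otimes\R$ as well. On the other hand, temperedness of $\pi$ is equivalent (via Lemma~\ref{lemm:packets} and the classical characterization of tempered $L$-parameters) to $\phi(\WR)$ having relatively compact image in $\lgr{G}$; this forces the one-parameter subgroup $r\mapsto \phi(r)=r^{\lambda+\ch{\tau}\lambda}$, $r>0$, to lie in the maximal compact subgroup of $\ch{H}$, whose Lie algebra is $iX^{\ast}(H)\otimes\R$. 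Intersecting the two conclusions, $\lambda+\ch{\tau}\lambda$ lies in $X^{\ast}(H)\otimes\R\cap iX^{\ast}(H)\otimes\R=\{0\}$, so $\ch{\tau}\lambda=-\lambda$, and Lemma~\ref{lemm:char_real_tempered} yields that $\phi$ is tempiric.

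The main obstacle I anticipate is the transfer in the last paragraph between temperedness of the representation $\pi$ and temperedness (in the boundedness sense) of the Langlands parameter $\phi$. An alternative that avoids direct appeal to the classical LLC is to run the \texttt{atlas} construction in reverse: express the tempiric $\pi$ as in the characterization at the beginning of Section~\ref{sec:ric_temp} as an irreducible summand of $\mathrm{Ind}_{P_\R}^{G_\R}(\sigma)$, where $\sigma$ is a discrete series whose central character is trivial on the split component; then match this realization against the description of $\pi(\phi,\chi)$ as a constituent of $I(\xi,\Lambda_\phi)$ to conclude $\Lambda_\phi|_{A(\R)}=1$ and hence $\lambda|_{\mathfrak{a}}=0$.
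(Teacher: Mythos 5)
Your proposal is correct, and its backbone is the same as the paper's: everything is funneled through Lemma~\ref{lemm:char_real_tempered} ($\phi$ tempiric $\iff \lambda|_{\mathfrak a}=0$) together with the characterization, recalled at the start of Section~\ref{sec:ric_temp}, of tempiric representations as constituents of $\mathrm{Ind}_{P(\R)}^{G(\R)}(\sigma)$ with $\sigma$ a (relative, limit of) discrete series whose central character is trivial on the split component. The differences are in the plumbing. For (i)$\Rightarrow$(iii) you invoke the \texttt{atlas} standard module of \S\ref{s:atlasparam} directly (cosocle of an induced-from-$M(\R)A(\R)$ module with $d\Lambda_\phi=\lambda$), whereas the paper passes through the classical packets via Lemma~\ref{lemm:packets} and the constructions of \cite{Contragredient}; your route is more self-contained within the paper's own formalism, and your observation that $\ch\tau\lambda=-\lambda$ plus $\lambda-\ch\tau\lambda\in X_*(\ch H)$ forces $2\lambda\in X^*(H)$, hence real infinitesimal character, is a useful point the paper leaves implicit. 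For (ii)$\Rightarrow$(i) your primary argument is genuinely different: you import the classical equivalence ``$\pi$ tempered $\iff$ $\phi$ bounded'' and intersect $X^*(H)\otimes\R$ with $i\left(X^*(H)\otimes\R\right)$ to kill $\lambda+\ch\tau\lambda$. That is a clean argument, but the equivalence you lean on (in the direction: one tempered member of the packet forces boundedness of $\phi$) is precisely the harmonic-analytic input the paper avoids; it does hold, via the uniqueness of Langlands quotient data, but you should cite or prove it rather than gesture at it. Your stated fallback --- writing the tempiric $\pi$ as a summand of an induction from a discrete series trivial on the split component, matching cuspidal data against $I(\xi,\Lambda_\phi)$, and concluding $\lambda|_{\mathfrak a}=0$ --- is exactly the paper's argument, and still requires the (standard) essential uniqueness of the cuspidal data $(M,A,\sigma)$ attached to $\pi$. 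Either way the proof closes; there is no genuine gap, only an external input to be acknowledged in your primary route for (ii)$\Rightarrow$(i).
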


Thus it makes sense to refer to a tempiric $L$-packet.

\begin{proof}
After replacing $\phi$ by a conjugate, we may write it as in~\eqref{canonical_form_phi_64}.
View $\lambda \in \ch{\mathfrak{h}}$ as a linear form on $\mathfrak{h}$.

First suppose $\phi$ is trivial on $\mathbb{R}^+$;
then the linear form $\lambda$ is trivial on $\mathfrak{a}$  by Lemma \ref{lemm:char_real_tempered}.
Let~$\pi$ be a representation in the $L$-packet $\Pi(\phi)$; write it as $[\xi, X]$
where $\xi$ is a strong involution and~$X$ is a $(\mathfrak{g}, K_\xi)$-module,
and let $G(\mathbb{R}, \theta_\xi)$ be a real form corresponding to $\xi$ under \S \ref{sec:comments_realform}.
By Lemma \ref{lemm:packets}, we can view $X$ as an element of $\Pi_{\mathrm{classical}}(\phi, \theta_\xi)$.
Run through the constructions of  \cite[Definitions 6.15, 6.6 and 4.10]{Contragredient}
to write $X$ as an irreducible quotient of a representation induced from a discrete series representation
of a cuspidal Levi subgroup. Inspecting the duality for tori sketched in \S \ref{sec:description_duality},
and following \cite[\S 4 and \S 6]{Contragredient},
the discrete series in question must be trivial on the split part of the Levi subgroup.
By the preceding discussion, the $(\mathfrak{g}, K_\xi)$-module $X$ must be tempiric. Thus (i) implies~(ii).

Conversely, suppose all representations in $\Pi(\phi)$ are tempiric.
Pick one of them: fix a strong involution $\xi$
and let $X$ be a $(\mathfrak{g}, K_\xi)$-module in the $L$-packet $\Pi(\phi, \theta_\xi)$, so $[\xi, X] \in \Pi(\phi)$.
Write~$X$ as an irreducible quotient of a representation induced from a discrete series representation $X^\flat$
of a  Levi subgroup $L_\mathbb{R}$ of $G(\mathbb{R}, \theta_\xi)$.
We may assume $L_\mathbb{R}$ to be $\theta_\xi$-stable, with Cartan subgroup~$H(\mathbb{R}, \theta_\xi)$,
and we may assume $\lambda$ to be a representative of the infinitesimal character of $X^\flat$.
Since $X$ is tempiric,  $X^\flat$ must have central character trivial on the split part of $L_{\mathbb{R}}$,
which has Lie algebra~$\mathfrak{a}$; thus $\lambda$ must be trivial on $\mathfrak{a}$.
Applying Lemma \ref{lemm:char_real_tempered} again, we see that $\phi$ must be trivial on $\mathbb{R}^+$.
\end{proof}

\subsection{Lowest $K$-types of tempiric representations}
\label{s:lkt_tempiric}

The behavior of tempiric representations under restriction to maximal compact subgroups is remarkable.
The next result \cite[Theorem 11.9]{KHatHowe} can be traced back to \cite[Chap.~6]{VoganGreenBook}.
\begin{theo}[Vogan] \label{vogan_theorem_ktypes} Fix a strong involution $\xi$ of $G$.
\begin{enumerate}[(1)]
\item  Let $\pi$ be a $(\mathfrak{g}, K_{\xi})$-module. If $\pi$ is tempiric, then $\pi$ has a unique lowest $K_{\xi}$-type.
\item If $\pi, \pi'$ are inequivalent tempiric $(\mathfrak{g}, K_\xi)$-modules, then their lowest $K_\xi$-types  are distinct.
\item Every irreducible representation of $K_\xi$ is the lowest $K_\xi$-type of a tempiric $(\mathfrak{g}, K_{\xi})$-module.
  \item The map taking $\pi$ to its lowest $K$-type induces a bijection between the set of equivalence classes of tempiric $(\g,K_\xi)$-modules and the unitary dual $\widehat{K_\xi}$.
\end{enumerate}   \end{theo}
%

Fix representatives $\xi_i$, $i \in I$, of the equivalence classes of strong involutions of~$G$ in the given inner class.
Each of these determines a real form $G(\mathbb{R}, \theta_{\xi_i})$ of $G$,
and a maximal compact subgroup~$K_{\xi_i}$. Consider the disjoint union of their unitary duals:
\[ \widehat{K}_{\mathrm{all}}=\coprod \limits_{i\in I} \widehat{K}_{\xi_i}.\]
If $\phi$ is a tempiric $L$-homomorphism, then every element  $\pi \in \Pi(\phi)$ is equivalent to a pair~$(\xi_i, X)$ where $i \in I$ and $X$ is a tempiric $(\g, K_{\xi_i})$-module. The equivalence class of $X$ is determined by $\pi$. The map (4) in Theorem~\ref{vogan_theorem_ktypes} takes~$X$ to an element of~$\Khatall$. This defines a map $\mathcal{V}\colon \Pi(\phi) \to \widehat{K}_{\mathrm{all}}$, and we have the following easy consequence of Theorems~\ref{vogan_theorem_ktypes} and~\ref{thm:complete_langlands}.
\begin{prop}
Suppose $\phi$ is a tempiric $L$-homomorphism. 
Then the map $\mathcal{V}\colon \Pi(\phi) \to \widehat{K}_{\mathrm{all}}$ is injective.
As $\phi$ runs over the tempiric $L$-parameters, the sets $\mathcal{V}(\Pi(\phi))$ exhaust $\widehat{K}_{\mathrm{all}}$.
\end{prop}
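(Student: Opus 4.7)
The plan is to deduce both statements directly from Theorems \ref{vogan_theorem_ktypes} and \ref{thm:complete_langlands}; the argument should be essentially formal.

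For injectivity, I would suppose $\pi, \pi' \in \Pi(\phi)$ satisfy $\mathcal{V}(\pi) = \mathcal{V}(\pi')$. Since $\Khatall$ is a disjoint union, the common lowest $K$-type lies in a single $\widehat{K}_{\xi_i}$, so both elements must correspond to strong real forms in the same equivalence class; one can therefore write $\pi = [\xi_i, X]$ and $\pi' = [\xi_i, X']$ for a common index $i \in I$. Proposition \ref{characterization_real_tempered_parameters} ensures that $X$ and $X'$ are tempiric $(\mathfrak{g}, K_{\xi_i})$-modules, and by hypothesis they share a lowest $K_{\xi_i}$-type. Parts (1) and (2) of Theorem \ref{vogan_theorem_ktypes} then force $X \simeq X'$ as $(\mathfrak{g}, K_{\xi_i})$-modules, whence $\pi = \pi'$ in $\Pi(\phi)$.

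For the exhaustion statement, I would fix $\mu \in \widehat{K}_{\xi_i}$ for some $i \in I$ and apply Theorem \ref{vogan_theorem_ktypes}(3) to obtain a tempiric $(\mathfrak{g}, K_{\xi_i})$-module $X$ whose unique lowest $K_{\xi_i}$-type is $\mu$. The class $[\xi_i, X]$ is then an irreducible tempiric representation of a strong real form, so Theorem \ref{thm:complete_langlands}(2) presents it as $\pi(\phi, \chi)$ for some complete, final Langlands parameter $(\phi, \chi)$. Proposition \ref{characterization_real_tempered_parameters} forces $\phi$ to be tempiric, and by construction $\mathcal{V}(\pi(\phi, \chi)) = \mu$; hence $\mu$ lies in $\mathcal{V}(\Pi(\phi))$, as required.

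The main point to be careful about is the bookkeeping on the strong real form side: in the injectivity step one must use the disjointness of $\Khatall = \coprod_{i \in I} \widehat{K}_{\xi_i}$ to pin down a common representative $\xi_i$ before invoking Theorem \ref{vogan_theorem_ktypes}(2), and in the exhaustion step one must verify that the tempiric character of $[\xi_i, X]$ really does transfer to the parameter side via Proposition \ref{characterization_real_tempered_parameters}. Neither of these is a serious obstacle; once Vogan's theorem is in hand, the statement is little more than a combinatorial assembly of the refined Langlands correspondence with the characterization of tempiric $L$-packets.
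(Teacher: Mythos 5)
Your proof is correct and follows exactly the route the paper intends: the paper states this proposition without proof as an ``easy consequence'' of Theorem \ref{vogan_theorem_ktypes} and Theorem \ref{thm:complete_langlands}, and your assembly of those two results (together with Proposition \ref{characterization_real_tempered_parameters} to pass between tempiric representations and tempiric parameters) is precisely the intended argument. The bookkeeping you flag — using the disjointness of $\Khatall$ to fix a common $\xi_i$ before invoking Theorem \ref{vogan_theorem_ktypes}(2) — is handled correctly.
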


\subsection{Lowest $K$-types and $L$-packets}

Suppose for the moment that $\phi$ is a \emph{tempered} $L$-homomorphism, and $\Pi(\phi)$ is the
corresponding tempered $L$-packet (of a single real form). 
Consider the restriction of $\phi$ to $\R_+$. We can rescale it by composing $\phi$ with the automorphism $(u, x) \to (u, \alpha x)$ of the Weil group $\WR = \WRc \times \R_+$. This yields a continuous family~$(\phi_{\alpha})_{\alpha\geq 0}$ of tempered $L$-homomorphisms and $L$-packets, which corresponds to rescaling the $\nu$-parameter of a family of induced representations.
It is well known that the restrictions to $K$ of the representations in these $L$-packets are independent of this parameter.
In particular this holds if we set the $\nu$-parameter to $0$, i.e. if we replace $\phi$ with $\phi_c$.
Now $\Pi(\phi_c)$ consists of tempiric representations, each with a distinct lowest $K$-type. Roughly speaking, this says that the lowest $K$-types of the representations $\pi_i$ should be found among the lowest $K$-types
of the tempiric representations in~$\Pi(\phi_c)$. 
Note that if $\phi$ is non-tempered, then the $K$-types of the representations in the packet do depend
on $\nu$.

Here is a precise statement which holds across all $L$-packets.
Let $\xi$ be a strong involution of~$G$;
set $K=K_\xi$ and consider a real form $G(\mathbb{R})=G(\mathbb{R}, \theta_\xi)$,
as in \S \ref{sec:comments_realform}.
Let $X$ be an irreducible $(\mathfrak{g}, K)$-module,
and consider the equivalence class $\pi=[\xi, X]$ of representations of strong real forms.
Write $\mu_1, \dots, \mu_r$ for the lowest $K$-types of $X$.
For each $i \in \{1, \dots, r\}$, let $Y_i$ be a tempiric $(\mathfrak{g}, K)$-module with lowest $K$-type $\mu_i$, and set $\varpi_i=[\xi, Y_i]$.

\begin{lemm} \label{lemm:packet_LKTs_phic}
If $\pi$ is in the $L$-packet $\Pi(\phi)$, then the representations $\varpi_i$ are all in~$\Pi(\phi_c)$.
\end{lemm}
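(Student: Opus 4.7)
The plan is to realise both $\pi$ and the $\varpi_i$ as constituents of standard modules parabolically induced from the \emph{same} cuspidal parabolic, and then exploit the fact that such induced modules have a $K$-structure independent of the continuous parameter along the split Cartan component.

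First I would replace $\phi$ by a $\ch G$-conjugate in standard form and, following Section \ref{sec:pin_down_rep}, read off the \texttt{atlas} parameter $(x, \Lambda_\phi)$ attached to $(\phi, \chi)$. Using the decomposition $\mathfrak h = \mathfrak t + \mathfrak a$ into $\pm 1$ eigenspaces of $\tau(\phi)$, write $d\Lambda_\phi = \lambda_t + \lambda_a$ with $\lambda_a \in \mathfrak a^*$. By Lemma \ref{lemm:char_real_tempered} the tempiric parameter $\phi_c$ is obtained from $\phi$ by replacing $\lambda_a$ by $0$; the construction of Section \ref{sec:pin_down_rep} then produces an \texttt{atlas} parameter $(x, \Lambda_{\phi_c})$ for $(\phi_c, \chi)$ sharing the same \texttt{KGB} element $x$ and satisfying $d\Lambda_{\phi_c} = \lambda_t$. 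Consequently $\pi = \pi(\xi, \Lambda_\phi)$ is realised, as in Section \ref{s:atlasparam}, as the cosocle of
\[
I(\xi, \Lambda_\phi) = \mathrm{Ind}_{P(\R)}^{G(\R)}(\sigma_0 \otimes e^{i\nu}),
\]
where $P(\R) = M(\R)A(\R)N(\R)$ is a cuspidal parabolic, $\sigma_0$ is a limit of discrete series of $M(\R)$ encoding $\lambda_t$, and $\nu \in \mathfrak a^*$ encodes $\lambda_a$; replacing $\nu$ by $0$ yields $I(\xi, \Lambda_{\phi_c}) = \mathrm{Ind}_{P(\R)}^{G(\R)}(\sigma_0)$.

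Next, the Iwasawa decomposition implies that the $K_\xi$-structure of $\mathrm{Ind}_{P(\R)}^{G(\R)}(\sigma_0 \otimes e^{i\nu})$ is independent of $\nu$, so $I(\xi, \Lambda_\phi)$ and $I(\xi, \Lambda_{\phi_c})$ have identical $K_\xi$-types and, in particular, identical lowest $K_\xi$-types. By the Proposition of Section \ref{s:atlasparam}, $\LKT(I(\xi, \Lambda_\phi)) = \LKT(\pi) = \{\mu_1, \ldots, \mu_r\}$; hence each $\mu_i$ is a lowest $K_\xi$-type of $I(\xi, \Lambda_{\phi_c})$ as well. Since $I(\xi, \Lambda_{\phi_c})$ is tempered with trivial central character on $A(\R)$, each of its irreducible constituents is tempiric, and the uniqueness statement in Theorem \ref{vogan_theorem_ktypes} forces the constituent carrying $\mu_i$ as its lowest $K_\xi$-type to be equivalent to $Y_i$. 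Thus $\varpi_i = [\xi, Y_i]$ occurs as a composition factor of a tempered standard module attached to $\phi_c$, and the description of large $L$-packets via Lemma \ref{lemm:packets} together with classical $R$-group theory for tempered induced modules places every such factor inside $\Pi(\phi_c)$.

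The main obstacle is the bookkeeping in the first step: verifying that the \texttt{atlas} parameter assignment of Section \ref{sec:pin_down_rep} commutes with the operations $\phi \mapsto \phi_c$ on the dual side and $\sigma_0 \otimes e^{i\nu} \mapsto \sigma_0$ on the group side, with the \emph{same} \texttt{KGB} element $x$ appearing in both. Once this compatibility is pinned down, the remainder is a direct application of Vogan's classification of tempiric representations by their lowest $K$-types.
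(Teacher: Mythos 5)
Your overall strategy is the same as the paper's: realize $X$ as a quotient of a standard module $\mathrm{Ind}_{P(\R)}^{G(\R)}(\sigma_0\otimes e^{i\nu})$, use the fact that the $K$-structure of such an induced module is independent of $\nu$ (this is what \cite[Proposition~6.6(2)]{HermitianFormsSMF} packages) to see that the $\mu_i$ are lowest $K$-types of $\mathrm{Ind}_{P(\R)}^{G(\R)}(\sigma_0)$ and hence that the $Y_i$ occur as its constituents, and then argue that those constituents lie in $\Pi(\phi_c)$. The first two steps are fine.

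The gap is in the last step, which you defer to ``classical $R$-group theory'' and flag as ``bookkeeping'' — but it is precisely the nontrivial half of the lemma. What must be shown is that $\sigma_0$ belongs to the $L$-packet $\Pi_M((\phi_M)_c)$ of the Levi, so that the constituents of $\mathrm{Ind}_{P(\R)}^{G(\R)}(\sigma_0)$ land in $\Pi(\phi_c)$ rather than in the packet of some other tempiric parameter with the same restriction to $M_0$. The paper proves this by pushing $\phi_M$ along $\ch{M}\to\ch{A}$ to a character $\psi$ of $\WR$, checking $\psi(z)=\phi(|z|)$ and that $\psi$ corresponds to the unramified character $\nu$ under duality for split tori, and then invoking the compatibility of the correspondence with unramified twists to conclude $\sigma_0=\sigma'\otimes\nu^{-1}\in\Pi_M(\phi_M\cdot\psi^{-1})=\Pi_M((\phi_M)_c)$. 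Without some version of this computation, nothing ties the operation $\nu\mapsto 0$ on the group side to the operation $\phi\mapsto\phi_c$ on the dual side, and the assertion $\varpi_i\in\Pi(\phi_c)$ is unproved.

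A secondary problem with your setup: you assert that the construction of Section~\ref{sec:pin_down_rep} applied to $\phi_c$ produces an \texttt{atlas} parameter with the \emph{same} \texttt{KGB} element $x$. This is false in general, because standard form is not preserved under $\phi\mapsto\phi_c$: when $\nu\to 0$ the group $\ch{L}_{\phi_c}$ grows, $\ch{H}$ may fail to be maximally $\ch\tau$-split in it, and the correct involution $\tau(\phi_c)$ (hence the fiber of $\X$ containing the relevant \texttt{KGB} elements) changes — this is exactly the $\mathrm{SL}(2,\R)$ phenomenon of Section~\ref{sec:example_SL2}, and it is why Section~\ref{sec:r_groups} needs the whole apparatus of Cayley transforms. (Also, $\chi$ is a character of $\tSphi$, not of $\tSphic$, so ``the parameter $(\phi_c,\chi)$'' is not well formed.) The paper avoids this by not attaching an \texttt{atlas} parameter to $\phi_c$ at all in this proof: it works with the classical packets via Lemma~\ref{lemm:packets} and does the matching at the level of the Levi $M$.
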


\begin{proof}
Let $\lgr{M}$ be a Levi subgroup of $\lgr{G}$, dual to a Levi subgroup $M(\R) \subset G(\R)$,
such that~$\phi$ factors through $\lgr{M} \subset \lgr{G}$
and that the resulting parameter $\phi_M\colon \WR \to \lgr{M}$
defines a relative discrete series $L$-packet~$\Pi_M(\phi_M)$ for~$M(\R)$.

By definition of $\Pi(\phi)$, there exists a parabolic subgroup $P(\R) \subset G(\R)$ with Levi factor $M(\R)$,
and a relative discrete series representation $\sigma'$ in the $L$-packet~$\Pi_M(\phi_M)$,
such that the $(\mathfrak{g},K)$-module $X$ is an irreducible quotient of $\mathrm{Ind}_{P(\R)}^{G(\R)}(\sigma')$.
See \cite[\S 6.3]{Contragredient}.

Let $A$ be the maximal split subtorus in the center of $M$.
We may write $\sigma'$ as $\sigma \otimes \nu$ where~$\sigma$ is trivial on $A(\R)$
and $\nu$ is a character of $A(\R)$, viewed as an unramified character of $M(\R)$.
Thus~$\pi$ is an irreducible quotient of $\mathrm{Ind}_{P(\R)}^{G(\R)}(\sigma\otimes \nu)$.

Using \cite[Proposition~6.6.(2)]{HermitianFormsSMF},
we deduce that the lowest $K$-types of $X$ must all occur in the tempered representation $\mathrm{Ind}_{P(\R)}^{G(\R)}(\sigma)$;
thus the $Y_i$ must all occur there.

To  prove that the $Y_i$ all belong to the $L$-packet $\Pi(\phi_c)$,
it is now enough to check that $\sigma$ must belong to the $L$-packet $\Pi_M((\phi_M)_c)$.
See \cite[Definition 6.15]{Contragredient}.

Now the inclusion $A \subset M$ induces a surjection $\ch{M} \to \ch{A}$,
and we may push $\phi_M$ to a map $\psi\colon \WR \to \ch{A}$.
Because $A$ is split, $\phi_M(j)$ must act by the identity on $\ch{A}$.
Using this, it is easy to check that  $\psi(z)=\phi(\lvert z\rvert)$ for $z \in \C^\times$;
and inspecting infinitesimal characters, that~$\psi$  corresponds to the character~$\nu$ of~$A$
under duality for split tori.
By the compatibility of the local Langlands correspondence with unramified twists,
the representation $\sigma \otimes \nu^{-1}$ of~$M(\R)$ must be an element of the $L$-packet $\Pi_M(\phi_M \cdot \psi^{-1})$.
The latter homomorphism is equal to $(\phi_M)_c$, and of course $\sigma'\otimes \nu^{-1}=\sigma$.
This proves $\sigma \in \Pi_M((\phi_M)_c)$, and the Lemma.
\end{proof}

%
%
%
%

\section{Embedding of component groups: proof of Proposition~\ref{prop:injectivity}}\label{sec:waldspurger}

This section gives a direct proof of Proposition~\ref{prop:injectivity} which uses only elementary structure theory on the dual side, and depends only on the definitions of $L$-groups, coverings and component groups (Sections~\ref{sec:l_hom} and~\ref{sec:componentgroups}).

Let $\phi\colon \mathbf{W}_{\mathbb{R}}\to\lgr{G}$ be an $L$-homomorphism. Recall $\WR=\WRc\times \R_+$ (Section~\ref{sec:intro_real_tempered}) , and let~$\phi_c$ be the tempiric $L$-homomorphism attached to~$\phi$: 
\begin{equation*}
\phi_c = \begin{cases} \phi & \text{ on $\WRc$;} \\ 1 & \text{on $\R^+$.}\end{cases}
\end{equation*}

Let us begin with a version of Proposition~\ref{prop:injectivity} which does not use coverings on the dual side. Set
 \begin{equation*} H=\ch{G}_\phi=\Cent_{\ch G}(\phi(\WR)), \quad J=\ch{G}_{\phi_c}=\Cent_{\ch G}(\phi(\WRc)).\end{equation*}
 Then $\Sphi$ is the component group $H/H^0$, and $\Sphic$ is $J/J^0$. Furthermore $H \subset J$ and $H^0 \subset H \cap J^0$; therefore the natural map $H \to J/J^0=\mathbb{S}_\phi$ factors through $H^0$, giving a group homomorphism 
 \[ \iota\colon \mathbb{S}_{\phi} \to \mathbb{S}_{\phi_c}.\]
 \begin{lemm}\label{lemm:walds}
 The homomorphism $\iota$ is injective. 
 \end{lemm}
 \begin{proof}[Proof (suggested by J-L.~Waldspurger)] 
 The kernel of~$\iota$ is $(H \cap J^0)/H^0$, so we need to prove the inclusion $H \cap J^0 \subset H^0$. For this it is enough to show that $H \cap J^0$ is connected.  Since $\WR=\WRc\times \R^+$, we have
\[ H = \Cent_{\ch G}(\phi(\WR))=\Cent_{\ch G}(\phi(\WRc))\cap \Cent_{\ch G}(\phi(\R^+))=
J\cap \Cent_{\ch G}(\phi(\R^+)).\]
Since $\R_+$ is in the center of~$\WR$ we have $\phi(\R_+) \subset J$, therefore
\[ H = \Cent_J(\phi(\R^+))\]
and in fact  $\phi(\R_+) \subset J^0$, whence
$$
H\cap J^0=\Cent_{J^0}(\phi(\R^+)).
$$
To see that the right-hand side is connected, we point out that  $J^0$ is a connected complex reductive group. Indeed, 
$$
J=\Cent_{\ch G}(\phi(\WRc))=\Cent_{\ch G}(\phi(\C^*))^{\phi(j)}
$$
and  this is the the fixed points of an involutive automorphism of the reductive group $\Cent_{\ch G}(\phi(\C^*))$. Therefore~$J$ is reductive, and $J^0$ is connected and reductive. Now $\phi(\R^+)$ is a one-parameter subgroup of~$J^0$ consisting of semisimple elements. Let~$X$ be its infinitesimal generator: this is a semisimple element in the Lie algebra of~$J$ such that $\phi(t)=\exp(tX)$ for all $t \in \R_+$. Then $H\cap J^0=\Cent_{J^0}(X)$. This is a Levi subgroup of $J^0$, and therefore connected. \end{proof}

We now go over to coverings. Recall the groups $\tSphi$ and $\tSphic$ in Proposition~\ref{prop:injectivity} arise as component groups of the pullbacks $\widetilde{H}$, $\widetilde{J}$ of $H$ and $J$ to the algebraic covering $\chGalg$ of~$\ch{G}$. The latter is the projective limit of all finite coverings of~$\chGalg$, and we will deduce Proposition~\ref{prop:injectivity}  from an analogue of Lemma~\ref{lemm:walds} attached to any \emph{finite} covering.

Suppose~$\ch{G}_Q \to \ch{G}$ is a finite covering of~$\ch{G}$, and let $H_Q, J_Q \subset \ch{G}_Q$ be the preimages of~$H$ and $J$ respectively. Set $\mathbb{S}_{\phi,Q} = H_Q/(H_Q)^0$ and $\mathbb{S}_{\phi,Q} = J_Q/(J_Q)^0$. Then the inclusion $H_Q \hookrightarrow J_Q$ induces a group homomorphism 
 \[ \iota_Q\colon \mathbb{S}_{\phi,Q} \to \mathbb{S}_{\phi_c,Q}.\]
\begin{lemm}\label{lemm:finite_covering}
The homomorphism $\iota_Q$ is injective.
\end{lemm}
\begin{proof} 
As before we have to prove the inclusion $H_Q \cap (J_Q)^0 \subset (H_Q)^0$, and it is enough to prove that $H_Q \cap (J_Q)^0$ is connected. The groups~$J$ and~$J_Q$ have the same Lie algebra, therefore the element~$X$ considered in the proof of Lemma~\ref{lemm:walds} (the infinitesimal generator of $\phi_{|\R^+}$) may be viewed as an element of $\mathrm{Lie}(J_Q)$. As in the proof of Lemma~\ref{lemm:walds} we have
\[ {H_Q}\cap ({J}_Q)^0 = \mathrm{Cent}_{({J}_Q)^0}(X).\]
Now the group $({J}_Q)^0$ is a connected finite covering of~$J^0$. Every connected finite covering of a connected reductive complex group is still in that category; therefore $(J_Q)^0$ is connected reductive, and $\mathrm{Cent}_{({J}_Q)^0}(X)$ is a Levi subgroup of $(J_Q)^0$, hence connected.
\end{proof}

We turn to the proof of Proposition~\ref{prop:injectivity}. The map
\[ \widetilde{\iota}\colon\widetilde{\mathbb{S}}_{\phi} \to \widetilde{\mathbb{S}}_{\phi,c}\]
in the Proposition is induced by the inclusion $\widetilde{H} \subset \widetilde{J}$, where   $\widetilde{H}$ and $\widetilde{J}$ are the pullbacks of $H$ and~$J$ to $\ch{G}^{\mathrm{alg}}$.  In order to deduce Proposition~\ref{prop:injectivity} (the injectivity of~$\widetilde{\iota}$) from Lemma~\ref{lemm:finite_covering}, we observe that $\widetilde{\mathbb{S}}_{\phi}$ identifies with the projective limit of the groups $\mathbb{S}_{\phi,Q}$. 

For every finite covering~$\ch{G}_Q$ of~$\ch{G}$, the canonical map $\chGalg \to \ch{G}_Q$ restricts to a map $\widetilde{H} \to H_Q$, and induces a group homomorphism $\widetilde{\mathbb{S}}_{\phi} \to \mathbb{S}_{\phi,Q}$. Similarly there is a canonical homomorphism $\widetilde{\mathbb{S}}_{\phi} \to \mathbb{S}_{\phi,Q}$, and the following diagram commutes:
\begin{equation} \label{diagram_iotas}
\xymatrix{
\widetilde{\mathbb{S}}_{\phi}\ar@{->}[d] \ar@{->}[r]^{\widetilde{\iota}}& \widetilde{\mathbb{S}}_{\phi_c}\ar@{->}[d]   \\
{\mathbb{S}}_{\phi,Q}\ar[r]^{\iota_Q}&{\mathbb{S}}_{\phi_c,Q}.
}
\end{equation}
In the terminology of Bourbaki \cite[Chapter III, \S~7, n${}^\circ$2]{Ensembles},  the maps $\iota_Q$ constitute a projective system of group homomorphisms, and~\eqref{diagram_iotas} means $\widetilde{\iota}$ is the projective limit of the maps $\iota_Q$.  By the Corollary to Proposition~2 in \cite[loc. cit.]{Ensembles}, the injectivity of all maps $\iota_Q$ implies that of~$\widetilde{\iota}$.

\section{General Langlands parameters}
\label{sec:r_groups}
\label{sec:discussion_algo}
\label{sec:strategy_injectivity}
We can finally study the interplay between the Langlands correspondence and lowest $K$-types.
We begin with an $L$-homomorphism $\phi\colon \mathbf{W}_{\mathbb{R}}\to\lgr{G}$,
a character $\chi$ of $\widetilde{\mathbb{S}}_\phi$,
and want to study the lowest $K$-types of $\pi(\phi, \chi)$.

In this section, we implement the program of \S \ref{sec:program_rgroups}.
We have seen that the lowest K-types of~$\pi(\phi,\chi)$ are a subset
of the lowest $K$-types of the tempiric representations in the $L$-packet~$\Pi(\phi_c)$
obtained by making the parameter~$\phi$ trivial on~$\R_+$. We seek to explicitly describe this set of $K$-types,
or equivalently the corresponding set $\{Y_i\}$ of tempiric representations.
Each $Y_i$ is of the form $\pi(\phi_c,\chi_{c,i})$ for a character $\chi_{c,i}$ of $\widetilde{\mathbb{S}}_{\phi_c}$. The main point is to show that the $\chi_{c,i}$ which occur 
are precisely those mapping to $\chi_c$ via the restriction map
$\Char(\widetilde{\mathbb{S}}_{\phi_c}) \to \Char(\widetilde{\mathbb{S}}_{\phi})$
of Proposition~\ref{prop:injectivity}.

The precise relationship between $X$ and the~$Y_i$
goes back (at least) to~ \cite[Chapter~6]{VoganGreenBook}.  The formulation of the result that we shall use is based on the reformulation
in~\cite{KHatHowe}, which was motivated by, and used in, the {\tt
  atlas} software.
The resulting algorithm is inductive, and  is based on a series of \emph{Cayley transforms} and \emph{cross actions}. We defined those in Section~\ref{s:crosscayleykgb} for \texttt{KGB} elements. In order to describe the algorithm, and in order to use it to prove our results, we first
\begin{enumerate}[(a)]
\item state the definitions of Cayley transforms and cross actions that we will use for the \texttt{atlas} parameters of  Section~\ref{s:atlasparam};
\item give a translation in terms of $L$-homomorphisms, in order to connect the algorithmic computation of lowest $K$-types with our statement about $L$-homomorphisms.
\end{enumerate}

In Section~\ref{sec:cayley_7} we explain these operations on {\tt atlas}
parameters. In Section~\ref{s:step} we give the parallel construction of
Langlands parameters on the dual side, and explain the relation
between the two. In Section~\ref{sec:description_finalize} we put the pieces together, and prove 
Theorem~\ref{main_theorem_rgroups}. 

\subsection{Cross action and Cayley transforms for parameters}
\label{sec:cayley_7}

Suppose $\textup{\texttt{p}}=(x,\Lambda)$ is an \texttt{atlas} parameter (Section~\ref{s:atlasparam}), and $\alpha$ is a simple root for the integral system $\Psiint(\Lambda)$. (In the notation of Section~\ref{s:crosscayleyL}, this is $\Psiint(d\Lambda)$.)

The cross action of~$s_\alpha$ on \texttt{atlas} parameters is, up to translation of language, defined in \cite[Chapter 8]{VoganGreenBook}. A convenient reference in our terminology is~\cite{TwistedParameters}.
 It satisfies: $s_\alpha\times (x,\Lambda)=(s_\alpha\times x,\Lambda')$ where
$s_\alpha\times x$ is the cross action on {\tt KGB} defined in Section \ref{s:crosscayleykgb}, and $\Lambda'$ is another genuine character,
satisfying $d\Lambda'=s_\alpha(d\Lambda)$.

An important special case, and the only one we need, is:   if $\langle d\Lambda,\ch\alpha\rangle=0$ and $\alpha$ is a simple root of $\Psiint(\Lambda)$ which is complex with respect to $\theta_x= \theta_{x,H}$, 
then $s_\alpha\times \textup{\texttt{p}}$ is equivalent to $\textup{\texttt{p}}$. 

We shall use the cross action for complex roots.  If~$\alpha$ is a simple $\theta_x$-complex root, we say $\alpha$ is of type $\texttt{C}^+$ if~$\theta_{x}(\alpha)$ is positive, and of type~$\texttt{C}^-$ otherwise.

Now suppose $\alpha$ is a $\theta_{x}$-real root which does not satisfy the parity condition of \cite[Theorem~6.3(5)]{HermitianFormsSMF}. In that case, the Cayley transform $c_\alpha(\textup{\texttt{p}})$ is
defined. This is a set of $1$ or $2$ parameters, depending on whether the real Cayley transform $c_\alpha(x)$ is
single- or double-valued (see~\S\ref{s:crosscayleykgb}). In the latter case $c_\alpha(\textup{\texttt{p}})=\{(x',\Lambda'), (x'',\Lambda')\}$ where $c_\alpha(x)=\{x',x''\}$.
  Also $x''=s_\alpha\times x'$ and $s_\alpha\times (x',\Lambda')=(x'',\Lambda')$. 
See \cite[Section 8.3]{VoganGreenBook}, or \cite[Section 14]{Algorithms}.

Here are the properties which we need. These are all basic properties of Vogan duality \cite{VoganIC4}, and in 
this language can be read off from the tables in \cite{TwistedParameters}. When $\texttt{p}$ is an \texttt{atlas} parameter and $\phi$~is an $L$-homomorphism, we write $\texttt{p} \in \Pi(\phi)$ when all irreducible constituents of the representation attached to~$\texttt{p}$ (Section~\ref{s:atlasparam}) are in the $L$-packet $\Pi(\phi)$.

\begin{lemma}
  \label{l:cayleyscrosses}
  Suppose $\phi=\phi(\lambda,y)$ is an $L$-homomorphism in standard form. Let $\textup{\texttt{p}}=(x,\Lambda)$ be an \textup{\texttt{atlas}} parameter satisfying $\textup{\texttt{p}} \in \Pi(\phi)$, and  let $\alpha$ be a simple root of $\Psiint(\Lambda)$ satisfying~\mbox{$\langle d\Lambda, \ch{\alpha}\rangle = 0$}. Let~$\theta_x$ be the involution~$\theta_{x,H}$ of~$H$, and let $\ch{\theta}_y$ be the involution~$\mathrm{int}(y)$ of~$\ch{H}$.
  \begin{enumerate}
\item We have $s_\alpha\times \textup{\texttt{p}}\in \Pi(s_\alpha\times \phi)$.
\item Suppose $\alpha$ is $\theta_x$-real. Then it satisfies the parity condition if and only if $\alpha$ is $\ch{\theta}_y$-compact. 
\item Assume $\alpha$ in (2) does not satisfy the parity condition.  Then $c_\alpha(\textup{\texttt{p}})\subset \Pi(c_\alpha(\phi))$.
  \end{enumerate}
\end{lemma}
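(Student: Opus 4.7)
The plan is to verify each claim by tracing the construction of the atlas parameter $\texttt{p}=(x,\Lambda)$ from $(\phi,\chi)$ in Section \ref{sec:pin_down_rep} through the relevant operation on either side. Recall that $\chi$ is pulled back along $\beta_\phi\colon\Char(\tSphi)\hookrightarrow\Char(\Schtautilde)$ and then sent to $x\in\X_\tau$ by the bijection $\mathcal{D}_\tau$ of Proposition \ref{prop:bij_char_KGB_4}, while $\Lambda=\Lambda_\phi$ is the genuine character of $\widetilde{H}(\R,\tau)_\rho$ attached to the admissible map $\phi\colon\WR\to\egr{H}=\langle\ch H,y_{b,\ch\tau}\rangle$ as in Section~\ref{sec:duality_tori}.

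For part (1), the cross action gives $s_\alpha\times\phi=\phi(\lambda,\sigma_\alpha y\sigma_\alpha^{-1})$, which replaces $\ch\tau$ by $s_\alpha\ch\tau s_\alpha$ and conjugates the basepoint $y_{b,\ch\tau}$ to a representative of $y_{b,s_\alpha\ch\tau s_\alpha}$ (the dual-side form of Lemma \ref{basepoint}). The commutative diagram \eqref{diagram_cross_v1} then says that transporting $\chi$ across the isomorphism $\mathrm{int}(\tilde\sigma_\alpha)\colon\widetilde{\mathbb S}_{\ch\tau}\to\widetilde{\mathbb S}_{s_\alpha\ch\tau s_\alpha}$ and applying $\mathcal{D}_{s_\alpha\tau s_\alpha}$ yields $s_\alpha\times x$. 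Because $\langle d\Lambda,\ch\alpha\rangle=0$, the $\rho$-shift implicit in \eqref{phi_for_duality} causes no interference, and conjugation by $\sigma_\alpha$ sends $\Lambda_\phi$ to the genuine character $\Lambda'$ of $\widetilde{H}(\R,s_\alpha\tau s_\alpha)_\rho$ satisfying $d\Lambda'=s_\alpha(d\Lambda)$; this is the \texttt{atlas} cross action by definition. Part (3) is formally parallel: diagram \eqref{diagram_cayley_v1} and Lemma \ref{lemm:technical_statements_dualside} replace \eqref{diagram_cross_v1}, the new involution is $s_\alpha\ch\tau$, and the $L$-homomorphism Cayley transform $c_\alpha(\phi)=\phi(\lambda,c^{\ch\alpha}(y))$ is in standard form. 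Depending on whether $c^\alpha$ is single- or double-valued, $\chi$ corresponds on the new side to one or to two \texttt{KGB} elements, matching the \texttt{atlas} Cayley transform; a final computation of $\Lambda_{c_\alpha(\phi)}$ from \eqref{phi_for_duality} matches the character $\Lambda'$ appearing on the \texttt{atlas} side.

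The main obstacle is part (2). The parity condition of \cite[Theorem~6.3(5)]{HermitianFormsSMF} is a condition on the value of $\Lambda=\Lambda_\phi$ at the canonical lift $\tilde m_\alpha$ of $m_\alpha=\ch\alpha(-1)\in H(\R,\tau)$ to the $\rho$-cover, and I would compute that value explicitly using \eqref{phi_for_duality}. Writing $\phi(j)=\exp(2i\pi\mu)\ch\xi$ in the $E$-group, the admissibility relation $\kappa=\tfrac12(1-\ch\tau)\lambda-(1+\ch\tau)\mu$ simplifies under $\langle\lambda,\ch\alpha\rangle=0$ (equivalent to $\langle d\Lambda,\ch\alpha\rangle=0$ since $d\Lambda_\phi=\lambda$) and reduces $\Lambda_\phi(\tilde m_\alpha)$ to the sign by which $\mathrm{Ad}(y)$ acts on the $\ch\alpha$-root vector $X_{\ch\alpha}$. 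That sign is $+1$ exactly when $\alpha$ is $\ch\theta_y$-compact. The delicate point is to match this $\pm 1$ dichotomy with the parity condition of \cite{HermitianFormsSMF}, which involves a $\rho$-normalization that must be tracked carefully through the double cover; the description of $\Lambda_\phi$ on the canonical maximal compact subgroup from \cite[Proposition~5.8]{AV1} is the key technical input that closes this gap.
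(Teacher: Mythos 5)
The paper does not actually prove this lemma: it disposes of all three parts in one sentence, as ``basic properties of Vogan duality \cite{VoganIC4}'' that ``can be read off from the tables in \cite{TwistedParameters}.'' You instead attempt a self-contained derivation from the internal machinery of Sections \ref{sec:dictionary_KGB_charcompgroup}--\ref{sec:local_langlands}, which is a genuinely different and more ambitious route. For parts (1) and (3) your outline is consistent with what the paper itself does later for the Cayley case (diagrams \eqref{diagram_cross_v1} and \eqref{diagram_cayley_v1}, Proposition \ref{prop:bij_char_KGB_param_7} and Corollary \ref{p:diagram}), and the extra bookkeeping you would need --- that the transported character of $\widetilde{\mathbb{S}}_{\ch{\tau}}$ really is the one produced by applying the Section \ref{sec:pin_down_rep} construction to $s_\alpha\times\phi$ resp.\ $c_\alpha(\phi)$, and that the genuine character $\Lambda'$ of the \texttt{atlas} operation agrees with $\Lambda_{s_\alpha\times\phi}$ resp.\ $\Lambda_{c_\alpha(\phi)}$ --- is of the same nature as the ``independence of choices'' argument in \S\ref{sec:independence}. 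Two caveats: diagram \eqref{diagram_cross_v1} is only stated for $\tau$-complex $\alpha$, whereas part (1) of the lemma places no such restriction on $\alpha$; and after the cross action or Cayley transform the new $L$-homomorphism need not be in standard form, so invoking the Section \ref{sec:pin_down_rep} construction for it requires a further conjugation that must be tracked (this is exactly what \S\ref{s:step} is designed to handle).

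The genuine gap is part (2), which is where all the content of the lemma lives. The identification of the parity condition of \cite[Theorem 6.3(5)]{HermitianFormsSMF} with $\ch{\theta}_y$-compactness of $\alpha$ is the duality of gradings on real/imaginary roots, and it is precisely the statement the paper outsources to \cite{VoganIC4}. Your plan --- compute $\Lambda_\phi(\tilde m_\alpha)$ from \eqref{phi_for_duality}, reduce it via the admissibility relation for $\kappa$ to the sign of $\mathrm{Ad}(y)$ on $X_{\ch\alpha}$, and compare with the parity condition --- is the right computation, but you explicitly do not carry it out: the $\rho$- and $\rho_r$-shifts in the definition of the $\rho$-cover, the normalization of the basepoint $y_{b,\ch\tau}$ of the $E$-group, and the precise form of the parity condition all enter with signs that can each flip the answer, and until they are tracked the equivalence is asserted rather than proved. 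Since parts (1) and (3) of your argument are routine transport-of-structure once (2) is known, whereas (2) is the nontrivial input, the proposal as written does not yet establish the lemma; you would need either to complete that sign computation (the model is \cite[Proposition 5.8]{AV1} together with the explicit form of the parity condition) or to fall back on the external references as the paper does.
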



To prove 
Theorem~\ref{main_theorem_rgroups}, we will consider the lowest $K$-type algorithm of Section~\ref{sec:finalize}, and interpret the Cayley transforms and cross-actions steps there in terms of component groups~$\widetilde{\mathbb{S}}_\phi$. To achieve this, we will need to define operations on $L$-homomorphisms which mirror the operations of the lowest $K$-type algorithm. 

We first note the analogue, for $L$-homomorphisms, of deforming the continuous parameter.

\begin{lemma}
\label{l:deform}
Suppose $\phi(\lambda,y)$ is an $L$-homomorphism, and $\tau\in \h^*$ is fixed by $\int(y)$.
Let
$h=e^{\pi i\tau}\in \ch H$. Then $\phi'=\phi(\lambda+\tau,hy)$ is a valid $L$-homomorphism, satisfying $\phi'(j)=\phi(j)$.
\end{lemma}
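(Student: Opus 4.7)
The plan is to verify directly that the pair $(\lambda+\tau, hy)$ satisfies the two compatibility conditions of~\eqref{e:phistd}(a), and then to check the identity $\phi'(j)=\phi(j)$ by a short computation. The central observation driving everything is that $\tau$ being fixed by $\int(y)$, together with $h=e^{\pi i\tau}\in \ch H$ being in the torus, means $h$ and $y$ interact particularly simply: conjugation by~$y$ fixes~$h$, and the scalar element~$h$ acts trivially on~$\ch{\mathfrak h}$ by the adjoint action.

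First I would handle the lattice condition $(\lambda+\tau)-(hy)(\lambda+\tau)\in X_*(\ch H)$. Since $h\in \ch H$, the adjoint action of~$hy$ on~$\ch{\mathfrak h}$ agrees with that of~$y$, so $(hy)(\lambda+\tau)=y\lambda+y\tau$. Using $y\tau=\tau$, this simplifies to $y\lambda+\tau$, and so
\[
(\lambda+\tau)-(hy)(\lambda+\tau)=\lambda-y\lambda,
\]
which lies in $X_*(\ch H)$ by the hypothesis that $(\lambda,y)$ itself satisfies~\eqref{e:phistd}(a).

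Next I would check $(hy)^2=e^{2\pi i(\lambda+\tau)}$. Writing $(hy)^2=h\cdot (yhy^{-1})\cdot y^2$ and using $\mathrm{Ad}(y)(h)=\exp(\pi i\,\mathrm{Ad}(y)\tau)=\exp(\pi i\tau)=h$, this becomes $h^2y^2=e^{2\pi i\tau}\cdot e^{2\pi i\lambda}=e^{2\pi i(\lambda+\tau)}$, as desired. Together with the previous step, this shows via Lemma~\ref{l:stdLhom} that $\phi'=\phi(\lambda+\tau,hy)$ is a well-defined $L$-homomorphism.

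Finally, the assertion $\phi'(j)=\phi(j)$ is immediate from~\eqref{e:phistd2}:
\[
\phi'(j)=e^{-\pi i(\lambda+\tau)}\,hy=e^{-\pi i\lambda}\,e^{-\pi i\tau}\,e^{\pi i\tau}\,y=e^{-\pi i\lambda}y=\phi(j).
\]
I do not anticipate a genuine obstacle here; the whole argument reduces to bookkeeping once one records that $h$ commutes with~$y$ and acts trivially on~$\ch{\mathfrak h}$. The only mild subtlety worth flagging is the use of $\mathrm{Ad}(y)\tau=\tau$ at two separate places (the lattice condition and the conjugation $yhy^{-1}=h$), which is why the hypothesis is phrased in terms of the automorphism $\int(y)$ rather than just~$y$.
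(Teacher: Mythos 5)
Your proposal is correct and follows essentially the same route as the paper: verify the two conditions of~\eqref{e:phistd}(a) for the pair $(\lambda+\tau,hy)$ using $\mathrm{Ad}(y)\tau=\tau$, then compute $\phi'(j)=e^{-\pi i(\lambda+\tau)}hy=e^{-\pi i\lambda}y=\phi(j)$. The only difference is that you also spell out the lattice condition $(\lambda+\tau)-(hy)(\lambda+\tau)=\lambda-y\lambda\in X_*(\ch H)$, which the paper treats as immediate; this is a harmless (indeed slightly more complete) addition.
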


\begin{proof}
  To see that $(\lambda+\tau,hy)$ defines a valid $L$-homomorphism, we only need to check that $(hy)^2=\exp(2\pi i(\lambda+\tau))$.
  By the assumption on $\tau$ we have $(hy)^2=h^2y^2=\exp(2\pi i\tau)\exp(2\pi i\lambda)$.
Also $\phi'(j)=\exp(-\pi i(\lambda+\tau))hy=\exp(-\pi i\lambda)y=\phi(j)$.
\end{proof}

Suppose $\phi$ is an $L$-homomorphism, and  $\textup{\texttt{p}}=(x,\Lambda)$ is an \texttt{atlas} parameter whose attached representation belongs to~$\Pi(\phi)$.
Write $\h=\t+\a$ for the Cartan
decomposition of $\h$ with respect to $\theta_x$.
Set 
$\nu=d\Lambda|_{\a}$. We want to deform $\nu$ to $0$, one root at a
time. For this we need a smaller root system $\Psires$.

Define $\Psi_q$ to be the set of roots orthogonal to all imaginary coroots \cite[Proposition 3.12]{VoganIC4}. 
The involution $\theta$ restricts to a quasisplit involution of $\Psi_q$, i.e. the corresponding real form of~$G$ is quasisplit, and 
$H$ is a maximally split Cartan subgroup. Let $\Psires$ be the restriction of the roots of $\Psi_q$ to $\a$.
This is a root system.

Suppose furthermore $\nu\ne 0$. 
Then we can find a simple root $\beta$ of $\Psires$ such that $\langle \nu,\ch\beta\rangle\ne 0$.
Let $\nu_\beta$ be the projection of $\nu$ on the $\beta$-root wall:
$$
\nu_\beta=\nu-\langle \nu,\ch\beta\rangle\omega_\beta
$$
where $\omega_\beta\in \a^*$ is the corresponding fundamental weight.
Deform $\Lambda$ to $\Lambda_\beta$ accordingly, i.e. let~$\Lambda_\beta$ be the genuine character which satisfies $(d\Lambda_{\beta})|_{\a^*}=d\Lambda|_{\a^*}-\langle\nu,\ch\beta\rangle\omega_\beta$ and coincides with $\Lambda$ on the compact part of the cover. 

Let $\textup{\texttt{p}}_\beta=(x,\Lambda_\beta)$. This is again an \texttt{atlas} parameter.

On the dual side, write $\phi=\phi(\lambda,y)$ for some $y$. Set $y_\beta=\exp(-\pi i\langle \nu,\ch\beta\rangle\omega_\beta)y$ and $\lambda_\beta=d\Lambda_\beta$,
and define $\phi_\beta=(\lambda_\beta,y_\beta)$ as in Lemma \ref{l:deform}.

We make similar definitions to dispense with the restriction of $\nu$ to $\mathfrak z$.
That is define, $\texttt{p}_0=(x,\Lambda_0)$ and $\phi_0=(\lambda_0,y_0)$ similarly, with the orthogonal complement of $\a\cap\mathfrak z$ playing the role of the kernel of $\beta$. 

By inspection of the definitions, we have: 

\begin{lemma}
\label{l:deform1}
Suppose $\phi$, $\textup{\texttt{p}}$ and $\beta$ are as above. Then $\textup{\texttt{p}}_\beta\in\Pi(\phi_\beta)$, and
$\textup{\texttt{p}}_0\in\Pi(\phi_0)$.
\end{lemma}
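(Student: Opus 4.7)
The plan is a direct inspection of the construction in Section~\ref{sec:pin_down_rep}, showing that the deformations on the $L$-homomorphism and \texttt{atlas} sides are engineered to match.

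First I would verify that $\phi_\beta$ is a valid $L$-homomorphism by invoking Lemma~\ref{l:deform} with the shift $-\langle\nu,\ch\beta\rangle\omega_\beta$. The hypothesis to check is that $\omega_\beta$ is fixed by $\int(y) = \ch\tau$; this holds because $\beta$ belongs to the restricted root system $\Psires$, so $\omega_\beta$ lies in $\mathfrak{a}^\ast$, which identifies with the $(+1)$-eigenspace of $\ch\tau$ on $\mathfrak{h}^\ast=\ch{\mathfrak{h}}$ under the duality of \S\ref{dual_involution}. A short calculation will then yield
\[
\phi_\beta(j) \,=\, e^{-\pi i\lambda_\beta}\,y_\beta \,=\, e^{-\pi i\lambda}\,y \,=\, \phi(j),
\]
so $\tau(\phi_\beta)=\tau(\phi)$; the remaining standard-form conditions of Definition~\ref{d:stdform} carry over from $\phi$ to $\phi_\beta$ since $\ch\tau$ and $\ch L_\phi\supset \ch L_{\phi_\beta}$ are unchanged, and since the shift, lying in the $\ch\tau$-fixed part, is orthogonal to every $\ch\tau$-real coroot so weak integral dominance is preserved.

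Next I would compute the genuine character $\Lambda_{\phi_\beta}$ of $\widetilde H(\R,\tau)_\rho$ attached to $\phi_\beta$ via the $E$-group construction of \S\ref{sec:duality_tori}. Its differential is $\lambda_\beta = d\Lambda_\beta$ directly from the definition of $\phi_\beta$. Its restriction to the canonical maximal compact subgroup is read off from $\phi_\beta(j)$ via formula~\eqref{phi_for_duality}, and since $\phi_\beta(j)=\phi(j)$ this restriction agrees with that of $\Lambda_\phi$, and hence with that of $\Lambda_\beta$, which by definition coincides with $\Lambda=\Lambda_\phi$ on the compact part. A genuine character of the double cover is determined by its differential together with its restriction to the maximal compact subgroup, so $\Lambda_{\phi_\beta} = \Lambda_\beta$.

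Finally, since $\tau(\phi_\beta)=\tau(\phi)$, the fiber $\mathcal{X}_\tau$ and the bijection of Proposition~\ref{prop:bij_char_KGB_param} are available for both parameters, and the \texttt{KGB} element $x$ underlying $\textup{\texttt{p}}$ corresponds to a character $\chi_\beta$ of $\widetilde{\mathbb{S}}_{\phi_\beta}$; the $\phi_\beta$-finality needed at $x$ follows from Remark~\ref{remark:noncompact_singular}, because shifting $\lambda$ along $\omega_\beta\in\mathfrak{a}^*$ only introduces new $\phi$-singular roots among the $\ch\tau$-real roots in $\Psi_q$, and the very definition of $\Psires$ ensures these are compatible with noncompactness at $x$. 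Combined with $\Lambda_{\phi_\beta}=\Lambda_\beta$ this shows $\pi(\phi_\beta,\chi_\beta)=\pi(\textup{\texttt{p}}_\beta)$, so $\textup{\texttt{p}}_\beta\in\Pi(\phi_\beta)$. The argument for $\textup{\texttt{p}}_0$ and $\phi_0$ is identical, with $\omega_\beta$ replaced by a suitable vector orthogonal to $\mathfrak{a}\cap\mathfrak{z}$. The only genuinely delicate step is the bookkeeping around $\phi_\beta$-finality and the comparison of the component groups $\widetilde{\mathbb{S}}_\phi$ and $\widetilde{\mathbb{S}}_{\phi_\beta}$, but this becomes transparent once one observes that the entire deformation takes place in the $\ch\tau$-fixed subspace and so preserves the $\ch\tau$-real root datum underlying Definition~\ref{d:stdform}.
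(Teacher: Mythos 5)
Your first two steps are fine and match what the paper intends by ``inspection of the definitions'': $\omega_\beta\in\mathfrak a^*$ lies in the $(+1)$-eigenspace of $\ch\tau$, so Lemma~\ref{l:deform} applies, $\phi_\beta(j)=\phi(j)$, and the genuine character attached to $\phi_\beta$ by \S\ref{sec:duality_tori} has differential $\lambda_\beta$ and unchanged restriction to the compact part (indeed $\kappa$ is unchanged because $(1-\ch\tau)\omega_\beta=0$), hence equals $\Lambda_\beta$. The gap is in the last step, where you identify $(x,\Lambda_\beta)$ with the \texttt{atlas} parameter of a complete parameter $(\phi_\beta,\chi_\beta)$ via the construction of \S\ref{sec:pin_down_rep}. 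That construction requires $\phi_\beta$ to be in standard form and produces a \emph{final} parameter, and both of your supporting claims fail. First, $\phi_\beta$ need not be in standard form: the paper says so explicitly at the start of \S\ref{s:step} (``This may fail to be in standard form''), the essential obstruction being condition (2) of Definition~\ref{d:stdform} --- since $\lambda_\beta$ is more singular than $\lambda$ we have $\ch L_{\phi_\beta}\supset \ch L_\phi$ (your inclusion is written backwards), and $\ch H$ need not remain maximally $\ch\tau$-split in the larger group. That is precisely why the inductive step needs the element $g_1$ and possibly a Cayley transform to restore standard form. Second, $(x,\Lambda_\beta)$ is in general \emph{not} final: the deformation creates new singular $\theta_x$-real roots, and by Lemma~\ref{l:cayleyscrosses}(2) such a root satisfies the parity condition only when it is $\ch\theta_y$-compact on the dual side; the noncompact case is exactly what triggers steps (3)--(4) of the algorithm and makes $\pi(x,\Lambda_\beta)$ reducible. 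Remark~\ref{remark:noncompact_singular} concerns imaginary roots and the image of $\E_\phi$, not the parity condition for real roots, so it cannot deliver the finality you need. When $(x,\Lambda_\beta)$ is not final, $x$ is not in the image of $\E_{\phi_\beta}$ and no character $\chi_\beta$ with $\E_{\phi_\beta}(\chi_\beta)=x$ exists, so your concluding identification collapses.

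A route that actually works, and does not require $\phi_\beta$ to be in standard form or $(x,\Lambda_\beta)$ to be final, is to argue through the classical packets as in Lemma~\ref{lemm:packets} and Lemma~\ref{lemm:packet_LKTs_phic}: $I(x,\Lambda_\beta)$ is induced from the same limit-of-discrete-series datum on $M(\R)A(\R)$ as $I(x,\Lambda)$, with the continuous parameter $\nu$ replaced by $\nu_\beta$, while on the dual side $\phi_\beta$ differs from $\phi$ by the unramified twist of the Levi parameter corresponding to exactly the same shift $-\langle\nu,\ch\beta\rangle\omega_\beta$ of the character of $A(\R)$. Compatibility of the correspondence with parabolic induction and unramified twists then gives $\texttt{p}_\beta\in\Pi(\phi_\beta)$, including all irreducible constituents in the non-final case; the argument for $\texttt{p}_0$ is the same.
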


This leads to a special case of cross actions and Cayley transforms in the case of a single singular root.

\begin{lemma}
  \label{l:singular}
Let $\alpha$ be a simple root of $\Psiint(\Lambda)$ satisfying
$\langle d\Lambda,\ch\alpha\rangle=0$.
  
\noindent (1) Suppose $\alpha$ is $\ch{\theta}_y$-complex.
Then $\textup{\texttt{p}}=(x,\Lambda)$ is equivalent to $s_\alpha\times \textup{\texttt{p}}$, and $\phi$ is conjugate to~\mbox{$s_\alpha\times\phi$}.

\noindent (2) Suppose $\alpha$ is $\ch{\theta}_y$-noncompact imaginary. 
Then $\phi$ is conjugate to  $\phi(\lambda,\sigma_\alpha y)$, and~\mbox{$c_\alpha(\textup{\texttt{p}})\subset \Pi(c_\alpha(\phi))$}.
\end{lemma}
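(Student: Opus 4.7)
Both parts rest on the canonical duality between the involutions $\theta_x=\theta_{x,H}$ of $H$ and $\ch\theta_y=\mathrm{int}(y)|_{\ch H}$ of $\ch H$ (Section \ref{dual_involution}): a root $\alpha$ of $G$ is $\ch\theta_y$-complex (resp.\ $\ch\theta_y$-imaginary) exactly when it is $\theta_x$-complex (resp.\ $\theta_x$-real), and by Lemma \ref{l:cayleyscrosses}(2), $\alpha$ is $\ch\theta_y$-noncompact imaginary exactly when it is $\theta_x$-real and fails the parity condition. I will use this dictionary throughout, together with the identification $d\Lambda=\lambda$ coming from the construction of Section \ref{sec:pin_down_rep}.

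For~(1), the equivalence $\textup{\texttt{p}}\simeq s_\alpha\times\textup{\texttt{p}}$ is immediate from Lemma \ref{l:independent}(2), since $\alpha$ is $\theta_x$-complex and simple integral. On the $L$-side I would verify directly that $\mathrm{Ad}(\sigma_\alpha)\phi=\phi(\lambda,\sigma_\alpha y\sigma_\alpha^{-1})=s_\alpha\times\phi$: the singularity condition $\langle\lambda,\ch\alpha\rangle=0$ gives $s_\alpha\lambda=\lambda$, so $\mathrm{Ad}(\sigma_\alpha)(z^\lambda\bar z^{\mathrm{Ad}(y)\lambda})=z^\lambda\bar z^{\mathrm{Ad}(\sigma_\alpha y\sigma_\alpha^{-1})\lambda}$, and evaluation at $j$ is straightforward. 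For part~(2), the inclusion $c_\alpha(\textup{\texttt{p}})\subset\Pi(c_\alpha(\phi))$ is then a direct application of Lemma \ref{l:cayleyscrosses}(3) once the dictionary is invoked.

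The main assertion to prove is that $\phi$ is $\ch G$-conjugate to $\phi(\lambda,\sigma_\alpha y)$. My strategy is to work inside the Levi $\ch L_\phi=\mathrm{Cent}_{\ch G}(\phi(\C^\times))$: the singularity hypothesis together with the $\ch\tau$-invariance of $\ch\alpha$ (which is automatic since $\alpha$ is $\ch\tau$-imaginary) give $\langle\lambda,\ch\alpha\rangle=\langle\ch\tau\lambda,\ch\alpha\rangle=0$, so the root-$\mathrm{SL}_2$ subgroup of $\ch G$ attached to $\alpha$ lies inside $\ch L_\phi$ and hence centralizes $\phi(\C^\times)$. It then suffices to exhibit an element $g$ of this root $\mathrm{SL}_2$ with $gyg^{-1}=\sigma_\alpha y$, since such a $g$ automatically conjugates $\phi$ to $\phi(\lambda,\sigma_\alpha y)$. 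Producing $g$ is the only real obstacle, and it reduces to a routine $\mathrm{SL}_2$ computation: the noncompact imaginary relation $\mathrm{Ad}(y)X_\alpha=-X_\alpha$ forces $\mathrm{Ad}(y)(X_\alpha-X_{-\alpha})=-(X_\alpha-X_{-\alpha})$, so $g=\exp\bigl(\tfrac{\pi}{4}(X_\alpha-X_{-\alpha})\bigr)$ satisfies $\mathrm{int}(y)(g)=g^{-1}$ and $g^2=\sigma_\alpha$ inside the root $\mathrm{SL}_2$. Expanding $gyg^{-1}=g\cdot\mathrm{int}(y)(g^{-1})\cdot y=g^2y=\sigma_\alpha y$ then finishes the construction.
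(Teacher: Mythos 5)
Your proof is correct and follows essentially the same route as the paper: part (1) is the same direct computation $s_\alpha\times\phi=\phi(\mathrm{Ad}(n)\lambda,nyn^{-1})=n\phi n^{-1}$ using $s_\alpha\lambda=\lambda$ (with the statement about $\texttt{p}$ coming from Lemma \ref{l:independent}), and part (2) rests on the fact that $c^\alpha(y)=\sigma_\alpha y$ is $\ch G$-conjugate to $y$ by an element fixing $\lambda$, plus Lemma \ref{l:cayleyscrosses}(3). The only difference is that where the paper simply cites \S\ref{s:crosscayleykgb} for that conjugacy, you write out the standard conjugating element $\exp\bigl(\tfrac{\pi}{4}(X_\alpha-X_{-\alpha})\bigr)$ explicitly, which is a correct and welcome elaboration rather than a new approach.
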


\begin{proof}
  For (1), choose $n$ representing $s_\alpha$. Then  
  \[ s_\alpha\times \phi(\lambda,y)=\phi(\lambda,nyn\inv)=\phi(\mathrm{Ad}(n)\lambda,nyn\inv)=n\phi(\lambda,y)n\inv.\]
  The statement about $\textup{\texttt{p}}$ follows from transport of structure.

  For (2), recall $c^\alpha(y)$ is $\ch G$-conjugate to $y$, by an element fixing $\lambda$  (see \S \ref{s:crosscayleykgb}). The conjugacy statement follows, and the second statement is already in Lemma   \ref{l:cayleyscrosses}.
\end{proof}

\subsection{The Inductive Step}
\label{s:step}

Now let us assume that $\phi$ is in standard form, and consider~$\phi_\beta$.  This may fail to be in standard form---the essential case is discussed in Section~\ref{sec:example_SL2}. More generally, it can happen that $\phi_\beta$ is not in standard form due to a
real root $\alpha$ which is not simple for~$\Psiint(\lambda_\beta)$. For this reason we need to adjust our operations on $L$-homomorphisms, using the following observation.

\begin{lemma}
Suppose $\phi=\phi(\lambda,y)$ is in standard form, let $\beta$ be a simple root of $\Psires$, and set $\phi_\beta=(\lambda_\beta,y_\beta)$ as above.
Then there exists $g_1\in \Cent_{\ch G}(\lambda_\beta)$ such that 
the $\ch{\theta}_{\int(g_1)y}$-imaginary roots are spanned by simple roots of $\Psiint(\lambda_\beta)$, and that we have the following alternative:  either
\begin{enumerate}
\item[\textup{(1)}] $\int(g_1)\phi_\beta$ is in standard form; or 
\item[\textup{(2)}] There is a simple root $\alpha$ of $\Psiint(\lambda_\beta)$, which is $\ch{\theta}_{\int(g_1)y}$-imaginary and noncompact,
and  $c_\alpha(\int(g_1)\phi_\beta)$ is in standard form.
\end{enumerate}
\end{lemma}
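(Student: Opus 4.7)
First, I would verify that alignment with $\ch H$ and weak integral dominance for $\ch\tau$-real roots transfer directly from $\phi$ to $\phi_\beta$. Alignment is automatic, since the prefactor $\exp(-\pi i\langle\nu,\ch\beta\rangle\omega_\beta)$ in the definition of $y_\beta$ lies in $\ch H$, so $\int(y_\beta)|_{\ch H}=\int(y)|_{\ch H}=\ch\tau$; weak dominance persists because $\lambda_\beta-\lambda$ is supported in the direction $\omega_\beta\in\mathfrak{a}^*$, which is orthogonal to the $\ch\tau$-real-root directions. Consequently, the only condition of Definition~\ref{d:stdform} that can fail for $\phi_\beta$ is~(2): $\ch H$ may cease to be maximally $\ch\tau$-split inside the enlarged centralizer $\ch L_{\phi_\beta}=\Cent_{\ch G}(\phi_\beta(\C^\times))\supseteq\ch L_\phi$.

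The crux is a rank-one analysis of the new roots $\Delta'=\Delta(\ch L_{\phi_\beta},\ch H)\setminus\Delta(\ch L_\phi,\ch H)$. Any $\alpha\in\Delta'$ restricts to $\mathfrak{a}$ as a nonzero rational multiple of~$\beta$, because the modification $\lambda\mapsto\lambda_\beta$ is along $\omega_\beta$ and the only restricted-root walls crossed are those attached to~$\beta$ in $\Psires$. From this I would deduce that the real split rank of $\ch L_{\phi_\beta}$ (with respect to the induced involution) exceeds that of $\ch L_\phi$ by at most one: the connected $\int(y_\beta)$-stable reductive subgroup generated by $\ch H$ and the root subgroups for $\Delta'$ has real rank at most one more than its counterpart in $\ch L_\phi$.

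To produce $g_1$, I would lift the Weyl-group stabilizer of~$\lambda_\beta$ inside $W(\ch L_{\phi_\beta},\ch H)$ into $\Cent_{\ch G}(\lambda_\beta)\cap\Norm_{\ch G}(\ch H)$ and select a representative whose conjugation action on~$y$ aligns the resulting $\int(g_1yg_1^{-1})$-imaginary subsystem of $\Delta(\ch G,\ch H)$ with the integral system $\Psiint(\lambda_\beta)$, so that these imaginary roots all lie in the span of simple roots of $\Psiint(\lambda_\beta)$. This is a variant of the standard-form argument in Lemma~\ref{l:stdLhom}, now carried out inside the enlarged Levi~$\ch L_{\phi_\beta}$ rather than $\ch L_\phi$.

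After conjugation by $g_1$, two cases emerge. Either $\ch H$ is already maximally $\int(g_1yg_1^{-1})$-split in $\ch L_{\phi_\beta}$, in which case $\int(g_1)\phi_\beta$ is in standard form and we are in case~(1); or the remaining rank-one failure is witnessed by a unique simple $\int(g_1yg_1^{-1})$-imaginary noncompact root $\alpha$ of $\Psiint(\lambda_\beta)$ lying inside $\Delta'$, in which case the Cayley transform $c_\alpha(\int(g_1)\phi_\beta)$ defined in \S\ref{s:crosscayleyL} exchanges the problematic compact imaginary direction for a split one, yielding standard form and case~(2). The main obstacle will be the rank-one assertion of the second paragraph; once granted, the argument reduces, inside the new Levi, to the classical fact that any two stable Cartan subgroups of a real reductive group of real rank at most one differ by at most one Cayley transform.
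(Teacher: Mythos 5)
Your overall strategy is the same as the paper's: work inside $\Cent_{\ch G}(\lambda_\beta)$, conjugate $y$ by Weyl-group representatives so as to put the imaginary roots in good position, and then invoke the fact that the split rank of the maximally split Cartan of the centralizer increases by at most one in passing from $\ch L_\phi$ to $\ch L_{\phi_\beta}$, so that at most one Cayley transform is needed. Your opening observations (alignment and weak integral dominance survive the deformation, so only the maximal-splitness condition can fail) are correct and implicit in the paper.

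The gap is in the construction of $g_1$. You ``select a representative whose conjugation action aligns the imaginary subsystem with $\Psiint(\lambda_\beta)$, so that these imaginary roots all lie in the span of simple roots,'' calling this a variant of Lemma~\ref{l:stdLhom}; but that lemma only produces alignment with $\ch H$ and gives no control on whether the $\ch{\theta}_{\int(g_1)y}$-imaginary roots are spanned by \emph{simple} roots of $\Psiint(\lambda_\beta)$ --- which is precisely the nontrivial conclusion you need, and is not automatic. The paper supplies the missing mechanism: iteratively apply the cross action $s_\alpha\times{}$ for simple roots $\alpha$ of $\Psiint(\lambda_\beta)$ of type $\texttt{C}^-$ (each such step is a genuine conjugation by Lemma~\ref{l:singular}(1), since such $\alpha$ are singular for $\lambda_\beta$), and once no type-$\texttt{C}^-$ simple roots remain, cite \cite[Lemma 8.6.2]{VoganGreenBook} for the spanning statement. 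A second, lesser, problem: your claim that every root of $\ch L_{\phi_\beta}$ outside $\ch L_\phi$ restricts to $\a$ as a multiple of $\beta$ does not follow from the definition of $\lambda_\beta$ (the new singular roots are those with $\langle\omega_\beta,\ch\alpha\rangle\neq 0$, a weaker condition); the conclusion you extract from it --- split rank up by at most one --- is the same fact the paper asserts, so the endpoint is right, but the intermediate step as stated is not. Finally, the noncompact imaginary simple root in case~(2) need not be unique; any choice serves.
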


\begin{proof}
This follows from a repeated application of the  previous Lemma.

We say a simple root~$\alpha$ of~$\Psiint$ is {of type~$\textup{\texttt{C}}^-$} with respect to~$y$ if it is $\ch{\theta}_y$-complex and $\ch{\theta}_y(\alpha)$ is positive. (Notice that this is the opposite convention to that of Section~\ref{sec:cayley_7}, because we are working on the dual side.)

Apply the following procedure inductively.

Suppose $\alpha$ is a simple root of $\Psiint$. 
If  $\alpha$ is of type~$\texttt{C}^-$
with respect to $y$, replace $\phi$ with $s_\alpha\times
\phi=\phi(\lambda,s_\alpha\times y)$; which is conjugate to $\phi$ by
Lemma \ref{l:singular}.
Repeat this until there are no
simple roots of type $\texttt{C}^-$.
This gives an $L$-homomorphism $\phi'=\phi(\lambda,y')$ which is conjugate to $\phi$. 
By \cite[Lemma 8.6.2]{VoganGreenBook} the
$\ch{\theta}_{y'}$-imaginary roots are spanned by simple roots.
If these roots are all compact then $\phi'$ is in standard form.
Otherwise if $\alpha$ is simple and noncompact then 
$c_\alpha(\phi')$ is in standard form,
since 
the split rank of the most split Cartan in the centralizer can go up by at most $1$.
\end{proof}

Now suppose we are given an $L$-homomorphism $\phi$ in standard form and  $\beta$ is a simple root of~$\Psires$.
Define $\phi_{\beta}$ and $g_1$ by the Lemma, and define $\phi'$ by $\phi'=\int(g_1)\phi_\beta$ in case (1)
and $\phi'=c_\alpha(\int(g_1)\phi_\beta)$ in case (2).
Recall from \S\ref{s:crosscayleyL} that in case (2), the parameter $\int(g_1)\phi_\beta$ is conjugate to $c_\alpha(\int(g_1)\phi_\beta)$, 
say by an element $g_2$.  Set $g=g_1$ in case (1) and $g=g_2g_1$ in case (2). We have 
the following sequence of $L$-homomorphisms:
$$
\phi \rightarrow \phi_{\beta}\overset{\int(g)}\longrightarrow\phi'
$$
where $\phi$ is in standard form;  $\phi_{\beta}$ is a valid homomorphism but not necessarily in standard form;
and $\phi'$ is in standard form. 

We now define a canonical map 
$\widetilde{\mathbb S}_{\phi}\rightarrow \widetilde{\mathbb S}_{\phi'}$.
First of all $\phi(j)=\phi_\beta(j)$,  the image of $\phi_\beta$ is contained in the image of $\phi$, 
and this induces a map
$\iota_1 \colon \widetilde{\mathbb{S}}_{\phi} \to \widetilde{\mathbb{S}}_{\phi_\beta}$ 
as in Section \S \ref{sec:program_rgroups}.

Next, choose an inverse image $\tilde g$ of $g$ in $\chGalg$. Then $\int(\tilde g)$ induces an isomorphism \mbox{$\iota_{2,{\tilde{g}}} \colon \widetilde{\mathbb{S}}_{\phi_\beta} \to \widetilde{\mathbb{S}}_{\phi'}$}.
This isomorphism is independent of the choices of $g$ and $\tilde g$;
if we make different choices $g',\tilde g'$  then  ${\tilde{g}}'{\tilde{g}}^{-1}$ is in $\ch{G}_{\phi'}^{\mathrm{alg}}$;
the isomorphisms $\iota_{2, {\tilde{g}}}$ and $\iota_{2, {\tilde{g}}'}$ then differ by the corresponding inner automorphism of  $\widetilde{\mathbb{S}}_{\phi'}$,
which is trivial since this group is abelian.
Write $\iota_{2}\colon \widetilde{\mathbb{S}}_{\phi_\beta} \to \widetilde{\mathbb{S}}_{\phi'}$ for the common value, and
define $\iota \colon \widetilde{\mathbb{S}}_{\phi} \to \widetilde{\mathbb{S}}_{\phi'}$ to be the composition $\iota_2\circ \iota_1$:
\[ \iota\colon \widetilde{\mathbb{S}}_{\phi} \overset{\iota_1}{\longrightarrow} \widetilde{\mathbb{S}}_{\phi_\beta}\overset{\iota_2}{\longrightarrow}  \widetilde{\mathbb{S}}_{\phi'}.\]

Consider the involutions $\tau=\tau(\phi)$ and $\tau'=\tau(\phi')$ of $H$.  
Then $\alpha$ is $\tau'$-imaginary, and we may consider the Cayley transform $c^\alpha: \X_{\tau'}[\alpha]\rightarrow \X_{\tau}$.
Recall from Section~\ref{sec:dictionary_KGB_charcompgroup} we have canonical surjective maps
$p\colon \widetilde{\mathbb{S}}_{\ch{\tau}}\twoheadrightarrow \widetilde{\mathbb{S}}_{\phi}$ and $p'\colon \widetilde{\mathbb{S}}_{\ch{\tau'}} \twoheadrightarrow \widetilde{\mathbb{S}}_{\phi'}$.
Note that the root~$\alpha$ is $\ch{\tau}'$-real and $\phi'$-singular; therefore the element~$\ch{\overline{m}}_\alpha$ (defined in \S\ref{sec:S_phi_and_S_tau}) belongs to the kernel of $p'\colon \widetilde{\mathbb{S}}_{\ch{\tau'}} \twoheadrightarrow~\widetilde{\mathbb{S}}_{\phi'}$. In the notation of~\eqref{e:tildeSnew}, this means
 $p'$ factors to a map $ \overline{p}' \colon \widetilde{\mathbb{S}}_{\ch{\tau'}}^{\mathrm{quo}} \twoheadrightarrow \widetilde{\mathbb{S}}_{\phi'}$.

\begin{prop}
\label{prop:bij_char_KGB_param_7}
  \begin{enumerate}[(1)]
\item We have $\overline{p}' \circ \lambda^{\alpha} = \iota \circ p$; that is, the following diagram~commutes:
\begin{equation} \label{diagram_phi_tau}
\xymatrix{
\widetilde{\mathbb{S}}_{\ch{\tau}}\ar@{->>}[d]_{p}\ar@{^{(}->}[r]^{\lambda^\alpha}& \widetilde{\mathbb{S}}^{\mathrm{quo}}_{\ch{\tau'}}\ar@{->>}[d]^{\overline{p}'}   \\
\widetilde{\mathbb{S}}_{\phi}\ar[r]^\iota&\widetilde{\mathbb{S}}_{\phi'}.
}
\end{equation}
\item The map $\iota: \widetilde{\mathbb{S}}_{\phi} \to \widetilde{\mathbb{S}}_{\phi'}$ is injective.
\end{enumerate}
\end{prop}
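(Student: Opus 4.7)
My plan is to prove (1) and (2) together by chasing representatives through the diagram; (2) will follow from (1), the injectivity of $\lambda^\alpha$ proved in Section~\ref{sec:cayley_4}, and a comparison of kernels.

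For (1), I would fix $s \in \Schtautilde$ and lift it to an element $\tilde h \in \ch H^{\alg, \ch\tau}$. The factorization $\iota = \iota_2\circ\iota_1$ separates the work. The map $\iota_1$ is induced by the inclusion $\ch G_\phi^{\alg}\subseteq \ch G_{\phi_\beta}^{\alg}$; since $\phi_\beta(j)=\phi(j)$ by Lemma~\ref{l:deform} and the projection of $\tilde h$ to the abelian group $\ch H$ automatically commutes with $\phi_\beta(\C^\times)\subset \ch H$, the element $\tilde h$ already represents $\iota_1(p(s))$. The isomorphism $\iota_2=\int(\tilde g)$ then sends $\tilde h$ to $\int(\tilde g)(\tilde h)$, and I would verify that $g$ can be chosen to normalize $\ch H$ and conjugate $\ch\tau$ to $\ch\tau'$ (using that $g_1$ centralizes $\ch H$ and the Cayley piece acts by an element of the Tits group). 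Hence $\int(\tilde g)(\tilde h)$ lies in $\ch H^{\alg,\ch\tau'}$ and projects to the same class in $\widetilde{\mathbb S}_{\phi'}$ via both paths of the diagram: along $\iota\circ p$ by construction, and along $\overline p'\circ \lambda^\alpha$ by the explicit description of $\lambda^\alpha$ in Lemma~\ref{lemm:technical_statements_dualside} (with the roles of $\ch\tau$ and $\ch\tau'$ interchanged, to match the direction of $\lambda^\alpha$ here).

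For (2), suppose $\iota(x)=1$ for some $x\in\tSphi$ and write $x=p(s)$ using surjectivity of $p$. By (1), $\overline p'(\lambda^\alpha(s))=1$, so $\lambda^\alpha(s)\in \ker\overline p'$. The plan is to show $\lambda^\alpha(\ker p)=\ker\overline p'$: the inclusion $\subseteq$ is immediate from (1), so the content lies in the reverse. Since $\ker p'$ is generated by the elements $\ch{\overline m}_\gamma$ for $\gamma$ a $\ch\tau'$-real, $\phi'$-singular root, and $\ch{\overline m}_\alpha$ is killed in $\widetilde{\mathbb S}_{\ch\tau'}^{\mathrm{quo}}$, I need to exhibit each remaining generator as $\lambda^\alpha(\ch{\overline m}_{\gamma'})$ for some $\ch\tau$-real $\phi$-singular root $\gamma'$. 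Once this is done, injectivity of $\lambda^\alpha$ forces $s\in \ker p$, hence $x=1$.

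The main obstacle will be the root-system bookkeeping underlying the reverse inclusion in (2): matching $\ch\tau'$-real $\phi'$-singular roots with $\ch\tau$-real $\phi$-singular roots after the deformation $\lambda\to \lambda_\beta$ and the Cayley transform on $\alpha$. The deformation kills the $\beta$-direction of the infinitesimal character and can create new singular roots among the preimages of $\beta$, while the Cayley transform on $\alpha$ exchanges imaginary and real types for certain roots under the involution. Carrying this out explicitly should reduce to a rank-one (or rank-two, for non-orthogonal roots) situation involving $\alpha$, from which the general case follows by additivity across the root system.
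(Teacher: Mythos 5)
There is a genuine gap in your treatment of part (1), at exactly the point the paper has to work hardest. First, two factual problems with your setup: $g_1$ lies in $\Cent_{\ch G}(\lambda_\beta)$ and is built from Tits-group representatives of Weyl reflections, so it \emph{normalizes} but does not centralize $\ch H$; more seriously, the Cayley component $g_2$ does \emph{not} normalize $\ch H$, and $g$ cannot conjugate $\ch\tau$ to $\ch\tau'$, because these two involutions of $\ch H$ differ by \emph{composition} with $s_\alpha$ (as in $\tau'=s_\alpha\tau$), not by conjugation --- already for $\mathrm{SL}(2)$ the identity and inversion on $\ch H$ are not $W$-conjugate. So your claim that $\int(\tilde g)(\tilde h)\in\ch H^{\alg,\ch\tau'}$ fails, and with it the identification of the two paths. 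The correct mechanism is different: one chooses, via Lemma~\ref{lemm:technical_statements_dualside}, a single representative $\tilde u$ lying in $\ch H^{\alg,\ch\tau}\cap\ch H^{\alg,\ch\tau'}$; the path $\overline p'\circ\lambda^\alpha$ then takes the class of the \emph{unconjugated} $\tilde u$ in $\widetilde{\mathbb S}_{\phi'}$, while $\iota\circ p$ takes the class of $\int(\tilde g^{\pm1})(\tilde u)$. The content of (1) is therefore that $\tilde u$ and its $\tilde g$-conjugate lie in the same connected component of $\ch G^{\alg}_{\phi'}$. Since $\tilde g$ itself is generally not in $\ch G^{\alg}_{\phi'}$ (it need not be fixed by $\ch\tau$), the ``commutators land in the identity component of a group with abelian component group'' argument does not apply directly; the paper repairs this by writing $\tilde g=\tilde k\exp(X)$ via the Cartan decomposition with respect to $\ch\tau$ and running a homotopy $\tilde g(t)=\tilde k\exp(tX)$. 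Your proposal contains no substitute for this step.

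For part (2) your strategy is essentially the paper's (identify the kernel of $\overline p'\circ\lambda^\alpha$ with products of $\ch{\overline m}_\beta$ for singular real roots and check these already die in $\widetilde{\mathbb S}_\phi$), but the equality $\lambda^\alpha(\ker p)=\ker\overline p'$ is not quite the right statement: $\ker\overline p'$ need not be contained in the image of the (non-surjective) injection $\lambda^\alpha$. What you need, and what the paper proves, is $(\lambda^\alpha)^{-1}(\ker\overline p')\subseteq\ker p$; this follows from the facts that the relevant generators $\ch{\overline m}_\beta$ come from $\ch\tau'$-real, $\phi'$-singular roots \emph{orthogonal to $\alpha$}, that such roots are exactly the $\ch\tau$-real roots, and that they remain $\phi$-singular by comparing infinitesimal characters. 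The rank-one reduction you propose is plausible but is the part that actually has to be carried out (the paper quotes \cite[p.~200]{ABV} for it).
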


We actually care about the dual of this diagram. Let $\rho$ be the dual of $\iota$.
\begin{coro}
\label{p:diagram} If $\E_{\phi}$, $\E_{\phi'}$ are the maps  \eqref{e:E},
 then the following diagram is commutative:
\begin{equation}
\label{diagram_cayley_final}
\xymatrix{
\Char(\widetilde{\mathbb{S}}_{\phi'})\ar@{->>}[r]^\rho\ar@{_{(}->}[d]_{\mathcal{E}_{\phi}}&\Char(\widetilde{\mathbb{S}}_{\phi})\ar@{_{(}->}[d]^{\mathcal{E}_{\phi}}\\
\X_{\tau'}[\alpha]\ar@{->>}[r]^{c^\alpha}&\X_{\tau}.
}
\end{equation}
\end{coro}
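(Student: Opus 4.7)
The plan is to obtain Corollary~\ref{p:diagram} as the Pontryagin dual of Proposition~\ref{prop:bij_char_KGB_param_7}(1), read through the definition of $\mathcal{E}_\phi$ and through the Cayley dictionary~\eqref{diagram_cayley_v1}. Because the content is formal once Proposition~\ref{prop:bij_char_KGB_param_7} is in hand, the principal difficulty is keeping the $\tau\leftrightarrow\tau'$ bookkeeping straight; there is no new representation-theoretic input.

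First I would fix conventions matching the swap of $\tau$ and $\tau'$ that is needed here: in the present inductive step the root $\alpha$ is $\tau'$-imaginary and $\phi'$-singular, so Section~\ref{sec:cayley_4} applies with the roles of $\tau$ and $\tau'$ exchanged. By Lemma~\ref{lemm:pispecial}, $\PiSpecialprime$ is the kernel of evaluation at $\ch{\overline{m}}_\alpha$ and $\widetilde{\mathbb{S}}_{\ch{\tau'}}^{\mathrm{quo}}$ is its Pontryagin dual. Because $\alpha$ is $\phi'$-singular, $\ch{\overline{m}}_\alpha$ lies in $\ker p'$ (\S\ref{sec:S_phi_and_S_tau}), so $p'$ factors through the map $\overline{p}'$ appearing in the proposition. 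Dually, $\beta_{\phi'}=(p')^{\ast}$ takes values in $\PiSpecialprime$, and coincides with $(\overline{p}')^{\ast}\colon\Char(\widetilde{\mathbb{S}}_{\phi'})\hookrightarrow\PiSpecialprime$ followed by the inclusion $\PiSpecialprime\hookrightarrow\Char(\widetilde{\mathbb{S}}_{\ch{\tau'}})$, while $\beta_\phi=p^{\ast}$ and $\rho=\iota^{\ast}$.

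With these identifications, Pontryagin-dualizing the commutative square of Proposition~\ref{prop:bij_char_KGB_param_7}(1) reads $\lambda_\alpha\circ(\overline{p}')^{\ast}=\beta_\phi\circ\rho$, where $\lambda_\alpha\colon\PiSpecialprime\to\Char(\widetilde{\mathbb{S}}_{\ch\tau})$ is the map~\eqref{kappa_sub_alpha} dual to $\lambda^\alpha$. On the geometric side, Remark~\ref{remark:noncompact_singular} ensures that $\mathcal{E}_{\phi'}$ lands in $\X_{\tau'}[\alpha]$, so $c^\alpha\circ\mathcal{E}_{\phi'}$ is defined; and the Cayley dictionary~\eqref{diagram_cayley_v1}, read in the swapped setting, gives $\mathcal{D}_\tau\circ c^\alpha=\lambda_\alpha\circ\mathcal{D}_{\tau'}$ on~$\X_{\tau'}[\alpha]$. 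Chaining these commutativities with $\mathcal{E}_\phi=\mathcal{D}_\tau^{-1}\circ\beta_\phi$ and $\mathcal{E}_{\phi'}=\mathcal{D}_{\tau'}^{-1}\circ\beta_{\phi'}=\mathcal{D}_{\tau'}^{-1}\circ(\overline{p}')^{\ast}$ yields $c^\alpha\circ\mathcal{E}_{\phi'}=\mathcal{E}_\phi\circ\rho$, which is precisely the commutativity asserted by the corollary.
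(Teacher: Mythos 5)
Your proposal is correct and follows essentially the same route as the paper: it stacks the Pontryagin dual of Proposition~\ref{prop:bij_char_KGB_param_7}(1) on top of the Cayley dictionary~\eqref{diagram_cayley_v1} (with the roles of $\tau$ and $\tau'$ swapped) and reads off the conclusion from the definitions of $\E_\phi$ and $\E_{\phi'}$. The only addition is your explicit appeal to Remark~\ref{remark:noncompact_singular} to justify that $\E_{\phi'}$ lands in $\X_{\tau'}[\alpha]$, which the paper leaves implicit.
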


\begin{proof}[Proof of the Corollary, given the Proposition]
Let $\beta'\colon \Char(\widetilde{\mathbb{S}}_{\phi'})\hookrightarrow \Char(\widetilde{\mathbb{S}}_{\ch{\tau'}})$ be
the map~\eqref{inj_toruscomp} for~$\phi'$, and let~$\beta$ be the corresponding map for~$\phi$. 
By definition these are dual to the maps $p',p$ above. 
Let~$\PiSpecialprime$ be the group of characters of $\widetilde{\mathbb{S}}_{\ch{\tau}'}$ trivial on $\ch{\overline{m}_\alpha}$ (see the proof of Lemma~\ref{lemm:pispecial}).  By the discussion of $\phi'$-final characters in
 Section~\ref{sec:S_phi_and_S_tau},  the image of~$\beta'$ is contained in $\PiSpecialprime$, and the induced map $\Char(\widetilde{\mathbb{S}}_{\phi'})\hookrightarrow \PiSpecialprime$ is dual to the map~$\overline{p}'$ of Proposition~\ref{prop:bij_char_KGB_param}.
Therefore, in the diagram
$$
\xymatrix{
\Char(\widetilde{\mathbb{S}}_{\phi'})\ar[r]^\rho\ar@{_{(}->}[d]_{\beta'}&
\Char(\widetilde{\mathbb{S}}_{\phi})\ar@{_{(}->}[d]^{\beta}\\
\PiSpecialprime\ar@{<->}[d]_{\D'}\ar@{->>}[r]^{\lambda_\alpha}&
\Char(\wt{\mathbb S}_{\ch{\tau}})\ar@{<->}^{\D}[d]\\
\X_{\tau'}[\alpha]\ar@{->}[r]^{c^\alpha}&\X_{\tau},
}$$
the top square is dual to the diagram of Proposition~\ref{prop:bij_char_KGB_param}, so it is commutative. The bottom square is~\eqref{diagram_cayley_v1}, so it is commutative as well. By definition of~$\E_{\phi'}$ and~$\E_{\phi}$ 
the conclusion follows.
\end{proof}

\begin{proof}[Proof of the Proposition]
Write $\mathrm{proj}$ for the projection  $\chGalg \to \ch G$,  and fix~\mbox{${\tilde g}\in \mathrm{proj}\inv(g)$}.
We may attach to every~$\tilde u$ in $\ch{H}^{\ch\tau',\mathrm{alg}}$  the element
$\mathrm{int}({\tilde{g}\inv})(\tilde u)$; this is in~$\ch{G}^{\mathrm{alg}}_{\phi'}$.
Taking the image modulo the identity component  defines a map $p_{{\tilde{g}}} \colon \widetilde{\mathbb{S}}_{\ch\tau'}
 \to \widetilde{\mathbb{S}}_{\phi_\beta}$. By \cite[Lemma~12.10]{ABV} this map is surjective, and by construction $\iota_2 \circ p_{{\tilde{g}}} = p$ ; in particular, the element~$\ch{\overline{m}}_\alpha$ is in the kernel of $p_{{\tilde{g}}}$, which therefore induces a map 
$\overline{p}_{{\tilde{g}}}\colon\widetilde{\mathbb{S}}_{\ch\tau}^{\mathrm{quo}} \to \widetilde{\mathbb{S}}_{\phi_\beta}$.
To prove the first statement in the proposition, it is therefore enough to check the commutativity of the following diagram:
\begin{equation}
  \label{diagram_int}
\xymatrix{
\widetilde{\mathbb S}_{\ch\tau}\ar[r]^{\lambda^\alpha}\ar[d]_{p}&\widetilde{\mathbb S}^{\mathrm{quo}}_{\ch\tau'}\ar[d]_{\overline{p}_{\tilde g}}\ar[dr]^{\overline{p}}\\
\widetilde{\mathbb S}_{\phi}\ar[r]_{\iota_1}&\widetilde{\mathbb S}_{\phi_\beta}\ar[r]_{\iota_2}&\widetilde{\mathbb S}_{\phi'}.\\
}
\end{equation}
The right triangle commutes and it is the left square which we need to consider.
Let us begin with an element $\overline{u}$ of $\widetilde{\mathbb{S}}_{\ch{\tau}}$, and set~$\overline{u}' = \lambda^\alpha(\overline{u})$. 
By Lemma~\ref{lemm:technical_statements_dualside}(1), 
there exists an element~$\tilde u$ in the identity component of~$\ch{H}^{\alg, \ch{\tau}}$ determined by~$\overline{u}$ such that~$\tilde u$ also belongs to~$\ch{H}^{\alg, \ch{\tau}'}$.
As above
$\int(\ch{\tilde g}\inv)(\tilde u)\in \ch{G}_{\phi'}^{\mathrm{alg}}$; by Lemma~\ref{lemm:technical_statements_dualside}(2), we know that $\overline{p}_{{\tilde{g}}}(\lambda^{\alpha})(\overline{u})$ is the image of this element in the component group $\widetilde{\mathbb{S}}_{\phi'}$.
On the other hand, $\tilde u$ is in $\ch{G}_{\phi}^{\mathrm{alg}} \subset \ch{G}_{\phi'}^{\alg}$,
and $\iota_1(p(\overline{u}))$ is the image of this element in $\widetilde{\mathbb{S}}_{\phi'}$.

Thus what we have to show is that the elements $\int(\tilde g)\inv(\tilde u)$ and $\tilde u$ are contained in the same connected component of
$\ch{G}_{\phi'}^{\alg}$.
If ${\tilde{g}}$ and $\tilde u$ were both elements of  $\ch{G}_{\phi'}^{\alg}$, this would be immediate:
the component group  $\widetilde{\mathbb{S}}_{\phi'}$ is abelian,
and therefore the identity component contains the derived group.
(If $\Gamma$ is  a topological group  such that the component group $\Gamma/\Gamma_0$ is abelian,
then the morphism $\Gamma \to \Gamma/\Gamma_0$
factors through $\Gamma/[\Gamma,\Gamma]$, i.e. $[\Gamma,\Gamma]\subset \Gamma_0$.)

Although we know $\tilde u$ and $\int({\tilde g}\inv)(\tilde u)$ are in
$\ch{G}_{\phi'}^{\alg}=\mathrm{proj}^{-1}(\ch{L}_{\phi'}^{\ch{\tau}})$,
and $\ch g\in\ch L_{\phi'}$,
we have no guarantee that $\ch g$ is fixed by $\ch\tau$.
To handle this, use the Cartan decomposition
of $\mathrm{proj}\inv(\ch L_{\phi'})$ with respect to $\ch\tau$ to write
\begin{equation}
  \label{e:kX}
  \ch{\tilde g}=\ch{\tilde k}\exp(X)
\end{equation}
where the projection~$\ch{k}$ of~$\ch{\tilde{k}}$ in~$\ch{L}_{\phi'}$ satisfies $\ch\tau(\ch{k})=\ch{k}$, the element~$X$ of  $\ch{\mathfrak{l}}_{\phi'}$ satisfies \mbox{$\ch\tau(X)=-X$}, and we use the exponential map from~$\ch{\mathfrak{l}}_{\phi'}$ to $\mathrm{proj}\inv(\ch L_{\phi'})$.
Now set
$$
\ch{\tilde g}(t)=\ch{\tilde k}\exp(tX)
$$
for $t \in \R$, so that
$$
\ch{\tilde g}(0)=\ch{\tilde k},\quad \ch{\tilde g}(1)=\ch{\tilde g}.
$$
Now the argument of the preceding paragraph applies to prove that $\int(\ch{\tilde k})\tilde u$ and $\tilde u$ are in the same component
of $\ch{G}_{\phi'}^{\alg}$.

We claim $\int(\ch{\tilde g}(t))(\tilde u)$ is fixed by $\ch\tau$ for all $0\le t\le 1$,
so $\tilde k$ and $\ch{\tilde g}$ are in the same component of~$\ch G_{\phi'}^{\alg}$.
Therefore $\int(\tilde k)(\tilde u)$ and $\int(\ch{\tilde g})(\tilde u)$ are in the same component,
and putting these together we conclude the same holds for $\int(\ch{\tilde g})(\tilde u)$ and $\tilde u$, as required.

For the claim above, the condition that $\int(\ch{\tilde g}(t))(\tilde u)$ is fixed by $\ch\tau$ is equivalent to
$$
\ch{\tilde g}(t)\inv \ch{\tau}(\ch{\tilde g}(t))\in Z_{\ch G_{\phi'}^{\alg}}(\tilde u).
$$
Plugging in \eqref{e:kX} the left hand side is
$$
\exp(-2tX).
$$
The exponential map is injective when restricted to the $-1$-eigenspace of $\ch\tau$. Therefore this holds, independent of $t$,
if and only if $X$ is in the Lie algebra of the centralizer. This holds because this is the case at $t=1$: $\ch{\tilde g}(1)=\ch{\tilde g}$,
and $\int(\ch{\tilde g})(\tilde u)$ is fixed by $\ch\tau$.
This completes the proof of~(1).

Let us prove that $\iota$ is injective. Suppose~$x$ is an element of the kernel of $\iota$ and observe Diagram~\eqref{diagram_phi_tau}.
Any preimage of $x$ under the left vertical arrow must be in the kernel of $\overline{p}' \circ \lambda^\alpha$.
Recall from the discussion in \S \ref{sec:cayley_4} that the kernel of $p'$
is generated by the  the elements $\ch{\overline{m}}_\beta$ for those roots $\beta$ that are~$\ch{\tau}'$-real and~$\phi'$-singular.
If~$\beta$ is such a root, and is orthogonal to~$\alpha$, then it is~$\ch{\tau}$-real,
and $\ch{\overline{m}}_\beta \in \widetilde{\mathbb{S}}_{\ch{\tau}}$;
furthermore all~$\ch{\tau}$-real roots are obtained in this way, see~\cite[p.~200]{ABV}.
Thus the kernel of $\overline{p}' \circ \lambda^\alpha$ is generated by the $\ch{\overline{m}}_\beta$,
where~$\beta$ runs through the~$\ch{\tau}'$-real, $\phi'$-singular roots that are orthogonal to~$\alpha$.
Given $x \in \mathrm{Ker}(\iota)$, we deduce that $p^{-1}(x)$ must consist of products of such $\ch{\overline{m}}_\beta$.
But the corresponding roots~$\beta$ are~$\ch{\tau}$-real, and they must be $\phi$-singular
because of the relationship between the infinitesimal characters of $\phi$ and~$\phi'$. 
Therefore the  $\ch{\overline{m}}_\beta$ are already contained in $\mathrm{Ker}(p)$.
This shows that~$x$ must be the identity element of $\widetilde{\mathbb{S}}_{\phi}$.
\end{proof}

\subsection{The inductive algorithm}
\label{sec:finalize}
\label{sec:def_final}
\label{sec:description_finalize}

We can finally turn to the computation of lowest $K$-types. 

Let us begin with a complete Langlands parameter $(\phi, \chi)$.

After conjugating $\phi$ we may assume it is in standard form (Def.~\ref{d:stdform}),
and write \mbox{$\phi=\phi(\lambda,y)$}. Let  $\texttt{p}=(x,\Lambda)$
be the \texttt{atlas} parameter attached to~$(\phi, \chi)$ by the construction of Section~\ref{sec:pin_down_rep}. It is final (Definition~\eqref{e:domc}).
Let ~$\tau=\tau(\phi)$ be the involution of~$H$ attached to~$\phi$, so ~$x\in\X_\tau$.

Now, apply the following inductive procedure. 

We start with the set $\texttt{S}=\{\texttt{p}=(x,\Lambda)\}$, and update it as follows. ~\\

\begin{enumerate}[(1)]
\item[(0)] Apply the second case of Lemma \ref{l:deform1} to assume $\lambda|_{\a\cap\mathfrak z}=0$.
\item If every parameter in $\texttt{S}$ is tempiric the algorithm is finished. Otherwise go to step~(2).
\item Let $\textup{\texttt{p}}=(x,\Lambda)$ be the first non-tempiric element of $\texttt{S}$, and let $\Psires$ be the restricted roots of $H$ with respect to $\theta_x$.
Look for a simple root $\beta$ of $\Psires$
so that $\langle \lambda,\ch\beta\rangle\ne 0$. Replace $\textup{\texttt{p}}=(x,\Lambda)$ with $\textup{\texttt{p}}=(x,\Lambda_\beta)$ (cf. Lemma \ref{l:deform1}).
Also replace $\phi$ with $\phi(\lambda_\beta,y_\beta)$ as in Lemma~\ref{l:deform}. 
\item  Suppose there is a simple root $\alpha$ of $\Psiint(\lambda_\beta)$ of type $\texttt{C}^-$ for $\theta_{x,H}$. 
Replace $\textup{\texttt{p}}$ with $s_\alpha(\textup{\texttt{p}})$ and $\phi$ with $s_\alpha(\phi)$. Repeat this
 until there are no such roots, then go to step~(4). 
\item  Suppose $\alpha$ is a simple root for $\Psiint(\lambda_\beta)$ which is $\ch{\theta}_{y, \ch{H}}$-noncompact imaginary. 
Replace $\textup{\texttt{p}}$ with~$c_\alpha(\textup{\texttt{p}})$ (which may consist of two elements) and $\phi$ with $c^\alpha(\phi)$. Go to step (1).
\end{enumerate}

The fact that the algorithm terminates comes from the following
remarks.  First, the $\tau$-real roots form a root system $\Psi_{r,
  \tau}$, and the non-parity condition is a grading \cite{VoganIC4};
therefore if all the $\Psi_{r, \tau}$-simple roots fail the parity
condition, then all of the $\tau$-real roots must fail that
condition. Next, if every simple,  $\tau$-complex root is of type $\texttt{C}^+$
then every $\Psi_{r, \tau}$-simple root is simple \cite[Lemma 8.6.2]{VoganGreenBook}.
This implies that if
  $\texttt{p}_0$ is not final and all simple, $\tau$-complex singular roots
  are type~$\texttt{C}^+$, then there  exists a $\tau$-real simple
  root which does not satisfy the parity condition.
By Lemma \ref{l:independent} the parameter $s_\alpha(\texttt{p})$ is equivalent to $\texttt{p}$, and of course $s_\alpha(\phi)$ is conjugate to $\phi$. 
  Therefore, a
  finite number of steps of type (3) will lead to the situation of
  (4). Thus, after a finite number of iterations, there will remain no simple, singular, real or type 
  $\texttt{C}^-$ roots. This proves that the algorithm will terminate.

  The algorithm produces a set of parameters  with the following properties. These are of the form $\{(x_i,\Lambda')\mid i=1,\dots 2^N\}$,
where the $x_i$ are all conjugate to $x$ and contained in the same fiber~$\X_{\tau'}$, and $\Lambda'$ is a genuine
character of $\widetilde{H}_{\rho}(\mathbb{R}, \tau')$. These arise from the steps in the algorithm; for example each step produces
one or two {\tt KGB} elements which are conjugate to the preceding~ones.

Here is a more explicit description of the lowest $K$-types of $\pi$ in terms of the parameters produced by the algorithm. 
Let  $\xi$ be a strong involution representative of $x$,
and let~$X$ be a $(\mathfrak{g}, K_\xi)$-module such that $[\xi, X]=\pi(\phi, \chi)$.
For $i=1, \dots, r$, fix a strong involution representative~$\xi_{i,N}$ of~$x_i$.
The parameter $(x_i, \Lambda_N)$ determines a $(\mathfrak{g}, K_{\xi_{i,N}})$-module $Y_i'$,
as in~\S\ref{s:atlasparam}.
Since $\xi_{i,N}$ and~$\xi$ are $G$-conjugate, there is a canonical correspondence
between $(\mathfrak{g}, K_{\xi_{i,N}})$-modules and $(\mathfrak{g}, K_\xi)$-modules \cite[Prop.~3.1]{TwistedParameters}.
Therefore $(x_i, \Lambda_N)$  determines a unique $(\mathfrak{g}, K_\xi)$-module $Y_i$, which is  tempiric, and has a unique lowest $K_{\xi}$-type~$\mu_i$.
The set of lowest $K_\xi$-types of~$X$ is then precisely~$\{\mu_1, \dots, \mu_r\}$.

That the algorithm does produce the lowest $K_\xi$-types
of~$X$ follows from the following remarks. If~$X$ is the unique irreducible
quotient of
$\mathrm{Ind}_{M_{\mathbb{R}}A_{\mathbb{R}}N_{\mathbb{R}}}^{G_{\mathbb{R}}}(\sigma
\otimes \nu)$, where $\sigma$ is a limit of discrete series representation of $M_\R$ and $\nu$ is a unitary character of~$A_\R$, then the set of lowest $K$-types of~$X$ coincides with
that
of~$X_0=\mathrm{Ind}_{M_{\mathbb{R}}A_{\mathbb{R}}N_{\mathbb{R}}}^{G_{\mathbb{R}}}(\sigma
\otimes \mathbf{1})$: see \cite[Proposition 6.6]{HermitianFormsSMF}.
The representation~$X_0$ is a direct sum of tempiric ones,
and its irreducible constituents can be found in terms of Cayley transforms
by a repeated application of the Schmid character identities
(see the discussion in \cite[Section~6]{HermitianFormsSMF}).
The algorithm is designed so that the tempiric modules~$Y_i$ that it produces
are precisely the irreducible constituents of~$X_0$.

Here is a key property of the algorithm.
It produces a sequence $\phi=\phi_1,\phi_2,\dots, \phi'=\phi_n$ of $L$-homomorphisms in standard form, with the following properties.
For $i\le n-1$, there is an element $\ch g_i\in \ch L_{\phi_i}$ conjugating $\phi_i(j)$ to $\phi_{i+1}(j)$. 
Let $\ch g=\ch g_1\ch g_2\dots \ch g_{n-1}$. Then $\ch g\in \ch L_\phi$, and \mbox{$\int(\ch g)(\phi(j))=\phi'(j)$}. Furthermore $\phi_c$ and $\phi'$ have the same restriction to
the split part of $\ch\h$ with respect to $\ch{\tau}(\phi')$. Therefore $\int(\ch g)(\phi_c)=\phi'$. 

\begin{proof}[Proof of Theorem \ref{main_theorem_rgroups}]
  
  Let $\phi\colon \mathbf{W}_{\mathbb{R}}\to\lgr{G}$ be an $L$-homomorphism, and let $\phi_c$ be the homomorphism \eqref{e:phi_c}.
 Consider the morphism
 \begin{equation} \label{iota_phi_phic} \iota_{\phi, \phi_c}\colon \widetilde{\mathbb{S}}_\phi \to \widetilde{\mathbb{S}}_{\phi_c}\end{equation}
 induced by the inclusion $\phi_c(\mathbf{W}_{\mathbb{R}})\subset \phi(\mathbf{W}_\mathbb{R})$. Let 
  $\mathrm{Res}\colon \Char(\tSphic) \to \Char(\tSphi)$ be the map dual to~$\iota_{\phi, \phi_c}$. 
  
 After conjugating~$\phi$ we may assume it is in standard form. Fix a character $\chi$ of the group $\widetilde{\mathbb{S}}_\phi$
 and let $\texttt{p} = (x, \Lambda_\phi)$ be the \texttt{atlas} parameter attached to $(\phi, \chi)$ in Section~\ref{sec:pin_down_rep}.

 Apply the preceding algorithm. As discussed above this produces a sequence \mbox{$\phi=\phi_1,\dots, \phi_n=\phi'$} of $L$-homomorphisms,
and an element $\ch g$, such that $\int(\ch g)(\phi_c)=\phi'$. We have the following sequence of maps:
$$
\iota_{\phi,\phi'}\colon\widetilde{\mathbb{S}}_{\phi} \overset{\iota_{\phi,\phi_c}}\longrightarrow \widetilde{\mathbb{S}}_{\phi_c}\overset{\int(\ch g)}\longrightarrow \widetilde{\mathbb{S}}_{\phi'}.
$$
 where  $\iota_{\phi, \phi'}$ is the composition of the maps $\iota_{\phi_k, \phi_{k+1}}\colon \widetilde{\mathbb{S}}_{\phi_k} \to \widetilde{\mathbb{S}}_{\phi_{k+1}}$ induced at each step by composition with~$\ch{\tilde{g}}_k$.\footnote{In particular, by Proposition~\ref{prop:bij_char_KGB_param_7}(2), the map  $\iota_{\phi, \phi'}$ is injective, and therefore $\iota_{\phi, \phi_c}$ is also injective: this gives another proof of Proposition~\ref{prop:injectivity}.}

 Now let $\tau'=\tau(\phi')$, and let  $\varrho\colon \Char(\widetilde{\mathbb{S}}_{\phi'})\to \Char(\tSphi)$ be the map dual to $\iota_{\phi,\phi'}$.

  Each step in the algorithm gives rise to a diagram of the form~\eqref{diagram_cayley_final}, and
  composing all these diagrams horizontally, we get a commutative diagram
 \begin{equation}
\label{diagram_total}
\xymatrix{
\Char(\widetilde{\mathbb{S}}_{\phi'})\ar@{->>}[r]^{\varrho}\ar@{_{(}->}[d]_{\E_{\phi'}}&\Char(\widetilde{\mathbb{S}}_{\phi})\ar@{_{(}->}[d]^{\E_{\phi}}\\
\X_{\tau'}(*)\ar@{->}[r]^{}&\X_{\tau}
}
\end{equation}
where $\X_{\tau'}(*)$ is the subset of $\X_{\tau'}$ where all the appropriate diagrams can be composed (in other words, the subset on which all operations performed on \texttt{KGB} elements in the algorithm run can be inverted by imaginary Cayley transforms or complex cross actions).

By construction $\X_{\tau'}(*)$  contains all the elements $x_{i}$ from the output of the algorithm. Since the diagram commutes, the $x_i$ can be obtained by starting with the original \texttt{KGB} element~$x$ in~$\X_\tau$, considering its inverse image~$\chi$ in~$\Char(\tSphi)$, taking the fiber~$\varrho^{-1}(\{\chi\})$ in~$\Char(\widetilde{\mathbb{S}}_{\phi'})$, and pushing it down to $\X_{\tau'}(*)$ via $\mathcal{E}_{\phi'}$. Write $\chi'_{i}$ for the character of $\widetilde{\mathbb{S}}_{\phi'}$ corresponding to $x_i$ under $\mathcal{E}_{\phi'}$.
The representations corresponding to the complete Langlands parameters~\mbox{$(\phi', \chi'_{i})$} are precisely those that give the lowest $K$-types of the representation~$\pi(\phi, \chi)$, as discussed at the beginning of \S\ref{sec:finalize}.

Now, under conjugation by $\ch{\tilde{g}}$, the characters $\chi'_{i}$ correpond to characters $\chi_{i, c}$ of $\tSphic$.
Given the  above discussion of the relationship between~$\iota_{\phi, \phi'}$ and $\iota_{\phi, \phi_c}$,
the $\chi_{i,c}$ are precisely the characters of $\tSphic$ which map to $\chi$ under the restriction map
$\mathrm{Res}\colon \Char(\tSphic)\to \Char(\tSphi)$ dual to $\iota_{\phi, \phi_c}$, as in~\eqref{restriction_morphism}.
Thus the complete Langlands parameters $(\phi_c, \chi_{i,c})$, for $\chi_{i,c}$ in $\mathrm{Res}^{-1}(\{\chi\})$,
are precisely those which parametrize  to the lowest $K$-types of $\pi(\phi, \chi)$.  This completes the proof.
\end{proof}

\section*{Appendix. Whittaker data, generic representations, and atlas basepoints}\label{sec:appendix}

We work in the setting of Section \ref{sec:structure_theory}: we are given $G$, an inner class determined by $\gamma\in\Out(G)$, and we fix a pinning
$(T,B,\{X_\alpha\})$
for $G$.
Let us fix an involution $\theta$ of $G$ in the inner class and  let $K=G^\theta$. We say $\theta$, or  $(\g,K)$, is {\it quasisplit} if
the corresponding real form $G(\R)$ is quasisplit (see \cite{AV1} for other characterizations).
There is a unique conjugacy class of quasisplit involutions in each inner class \cite[Theorem 6.14]{AV1}.
A representative is $\theta=\int(\xi_0)$ where $\xi_0=e^{\pi i\ch\rho}\xi_{\gamma}$. See \S\ref{sec:basepoint}. 

It is most natural to work entirely in the algebraic setting, in which case the preferred representation in an $L$-packet
(corresponding to the trivial character) has an algebraic Whittaker model. We give the definitions and properties here,
and make the connection with classical Whittaker models in Section~\href{sec:real_whittaker}{A.2}.

\subsection*{A.1. Algebraic Whittaker models}\label{sec:alg_whittaker}

The involution $\theta$ acts on the Lie algebra $\g$ and its vector space dual $\g^*$;
write $\g^*=\k^*\oplus\s^*$ where $\s$ is the $(-1)$-eigenspace of $\theta$. 
Let $\sprin$ be the intersection of $\s^*$ with the principal nilpotent $G$-orbit. 
This is nonempty  if and only if $(\g,K)$ is quasisplit.
We define an {\it algebraic Whittaker datum}  for $(\g,K)$ to  be a $K$-orbit on $\sprin$.
If $\pi$ is a $(\g,K)$-module of finite length, then the associated variety $AV(\pi)$ is a union of $K$-orbits on~$\s^*$.
If $\O\subset \sprin$ is an algebraic Whittaker datum, we say $\pi$ has an {\it algebraic Whittaker model of type $\O$}
if $\O\subset AV(\pi)$. We say $\pi$ is {\it large} if it has an algebraic Whittaker model of type~$\O$ for some $\O \subset \sprin$.

Set $y=e^{\pi i\rho}\ch\delta$, where~$\ch{\delta} \in \lgr{G}$ is used to define the $L$-group in Section~\ref{sec:Lgroup}. 
Recall there is a special $L$-homomorphism $\phi_0=(\rho,y)$, defining an $L$-packet~$\Pi(\phi_0)$ of fundamental series (see Example~\ref{ex:fundamental}).
Furthermore $(\phi_0,\mathbf{1})$ determines a special fundamental series $\pi_0\in \Pi(\phi_0)$. In our setting (Section~\ref{sec:local_langlands}) this can be viewed canonically as an irreducible $(\g, K)$-module (compare \S\ref{sec:def_representations} and~\cite[Prop.~3.1]{TwistedParameters}), and arises as the unique irreducible quotient of a standard module $I(\phi_0, \mathbf{1})$.

\begin{enonce*}{Lemma A.1}
The $(\g, K)$-module $\pi_0$ is a large fundamental series for the quasisplit pair~\mbox{$(\g,K)$}.
The associated variety $AV(\pi_0)$ is the closure of a single $K$-orbit on $\sprin$.
\end{enonce*}

\begin{proof}
If $\alpha$ is a simple $\xi_0$-imaginary  root then  $\int(\xi_0)(X_\alpha)=\int(e^{\pi i\langle \alpha,\ch\rho\rangle})=-1$.
This says that $\alpha$ is $\xi_0$-noncompact. By \cite[Theorem 6.2(f)]{Vogan_GK}, the $(\g, K)$-module $\pi_0$ is large.
 The second statement follows from the fact that the associated variety of a fundamental series representation is the closure of
 a single $K$-orbit. See \cite[Proposition A.9]{AV1}.
\end{proof}

We use this to specify an  algebraic Whittaker model. Let us single out the $K$-orbit corresponding to~$\pi_0$ by the Lemma, and define
\begin{equation}
\label{d:O0}\tag{A.2}
\O_0=AV(\pi_0).
\end{equation}

\begin{enonce*}{Proposition~A.3}
\label{p:algwhittaker}
Let~$\phi$ be an $L$-homomorphism, and let~$\mathbf{1}$ be the trivial character of $\widetilde{\mathbb{S}}_\phi$. Then the standard module~\mbox{$I(\phi,\mathbf{1})$} has an algebraic Whittaker model 
of type $\O_0$.

In particular, if $\phi$ is tempered then $\pi(\phi,\mathbf{1})$ has an algebraic Whittaker model of type $\O_0$.
\end{enonce*}

Recall in our setting, we start with an \texttt{atlas} parameter $(x,\Lambda)$ to define a
$(\g,K_x)$-module. Any complete Langlands parameter $(\phi, \chi)$ gives rise to an atlas parameter $(x, \Lambda)$, and in case $\chi=\mathbf{1}$ the element~$x$ is the basepoint in its fiber.  In order to prove the Proposition we need to
relate the corresponding~$(\g, K_x)$-module to our fixed quasisplit pair $(\g,K)$, where
$\theta=\theta_{\xi_0}$ and $K=G^{\theta}$ as above, using conjugation by some element $g\in G$.

The main result we need is the following. For $x$ a {\tt KGB} element and $B$ a Borel subgroup containing $H$, let
$\Sigma(x,B)$ be the set of $\theta_{x,H}$-imaginary roots in $B$.

\begin{enonce*}{Lemma A.4}
\label{l:Q}
Let $\xi_0$ and $\theta=\theta_{\xi_0}$ and $K$ be as above.
Suppose $x$ is a {\tt KGB} element with trivial torus part. 
Choose a strong involution $\xi$ representing $x$. 

Let $L=\Cent_G((H^{\theta_\xi})^0)$ and choose a $\theta_\xi$-stable parabolic subgroup  $Q=LU$ such that 
\begin{equation}
\label{e:Sigma1}
\Sigma(x,B)\subset U.
\end{equation}
Then there exists $g \in G$ satisfying 
\begin{equation}
\label{e:Sigma2}
g\xi_0g\inv=\xi\text{ and  }\Sigma(x_0,B)\subset g\inv Qg.
\end{equation}
\end{enonce*}

\begin{proof}[Proof of the Proposition, given the Lemma]
First it is convenient to note, since we can modify~$g$ by an element of $K$ on the right, that \eqref{e:Sigma2} is equivalent to the statement: 
\begin{equation}
  \label{e:Sigma3}
  \tag{\ref{e:Sigma2}$'$}
  \exists g\in G\text{ satisfying: }g\xi_0g\inv=\xi\text{ and  }\Sigma(x_0,B)\text{ is $K$-conjugate to a subset of }g\inv Qg.
\end{equation}
  
Let~$(x, \Lambda)$ be the atlas parameter attached to $(\phi, \mathbf{1})$ as in Section~\ref{sec:local_langlands},
with corresponding standard module $I$.
Fix a strong involution $\xi$ representing $x$ and choose $g$ as in the Lemma. 
We have to show that, after conjugating by $g\inv$, the corresponding standard $(\g,K)$-module $I$ satisfies $\O_0\subset AV(I)$.
In the notation of \cite{AV1} the corresponding standard module $I$ is the $(\g,K_\xi)$-module $I(B,\Lambda)$.

Let $H'=g\inv Hg,B'=g\inv Bg$. 
Note that $\int(g\inv)$ takes the action of $\theta_\xi=\theta_{\xi,H}$ on $H$ to the action of $\theta=\theta_{\xi_0}$ on $H'$:
$$
g\inv \theta_\xi(h)g=\theta(g\inv hg)\quad (h\in H).
$$

Write $\Sigma(x,H,B)$ to emphasize the role of $H$. After conjugating by $g\inv$ in \eqref{e:Sigma1} the two conditions
of the Lemma become
$$
\begin{aligned}
\Sigma(x_0,H',B')&\subset g\inv Ug,\\
\Sigma(x_0,H,B)&\text{ is $K$-conjugate to a subset of }g\inv Qg.
\end{aligned}
$$
Now both statements are about $(g,K)$-modules, and the $(\g,K_\xi)$-module  $I(B,\Lambda)$ has been replaced  by the $(\g,K)$-module $I(B',g\inv\Lambda'g)$.
Since $Q$ is $\theta_\xi$-stable, $g\inv Qg$ is $\theta$-stable.

Thus we are in precisely the setting of 
\cite[Theorem A.10]{AV1}, with: $H'$ in place of $T$ in \cite{AV1};
$g\inv Qg$    in place of $Q$;  $H$ in place of $T_c$;
and $\Sigma(x_0,H,B)$ in place of $\sigma(T_c,\Sigma_c)$. 
Note that $B$ is a $\theta$-stable Borel subgroup, and
if $(x_0,\Lambda_0)$ is any parameter then $AV(I(x_0,\Lambda_0))=\O_0$ (see \cite[A.11]{AV1}).
By \cite[Theorem A.10]{AV1}
we have $\O_0\subset AV(I)$ as required.
\end{proof}

\begin{proof}[Proof of the Lemma]
  Let~$x_0$ and $x$ be the \texttt{KGB} elements corresponding to~$\xi_0$ and $\xi$, respectively. Each of these is the basepoint in its fiber. We can pass from~$x_0$ to~$x$ by series of simple complex cross actions, and simple noncompact imaginary Cayley transforms (see \cite[Section 14]{Algorithms}).
  Each of these preserves the property of the torus part being $0$, and it shows the existence of $g$ satisfying $gx_0g\inv =x$ (see
  \eqref{def_basepoint}).
  It is enough to show the Lemma holds at each step. 
  
So assume the Proposition holds for $x$, and write $Q=Q_x=L_xU_x$ as in the Lemma. Thus by the inductive hypothesis we can choose $g_x$ such that $g\xi_0g\inv = \xi$ and
\begin{equation}
  \label{e:step}
  \begin{aligned}
    \Sigma(x,B)&\subset U_x,\\
    \Sigma(x_0,B)&\subset g_x\inv Q_xg.
  \end{aligned}
\end{equation}
We need to show the same holds with $x$ replaced by $y=s_\alpha\times x$ for $\alpha$ simple, $\theta_x$-complex, or by $y=c^\alpha(x)$ with $\alpha$ simple, $\theta_x$-noncompact imaginary.

Consider the case of a simple cross action. Let $\eta$ be a strong involution representing~$y=s_\alpha \times x$. Let  $g_\alpha\in \Norm(H)$ representing $s_\alpha$ such that $\eta = g_\alpha\xi g_\alpha^{-1}$. 
Let $Q_y=g_\alpha Q_xg_\alpha\inv$. This is a $\theta_\eta$-stable parabolic sugroup.

  Now $s_\alpha$ takes the $\theta_x$-imaginary roots to the $\theta_y$-imaginary roots.
  Since $\alpha$ is simple it permutes the $B$-positive roots; since it is not imaginary takes $\theta_x$-imaginary positive roots to positive roots.
  Thus we have $s_\alpha(\Sigma(x,B))=\Sigma(y,B)$. Therefore by  \eqref{e:step},
  $$
  \Sigma(y,B)=s_\alpha(\Sigma(x,B))\subset s_\alpha U_x=U_y
  $$
  Set $g_y=g_\alpha g_x$. Then $g_y\xi_0g_y\inv =\eta$, and $g_y\inv Q_yg_y= (g_\alpha g_x)\inv(g_\alpha Q_xg_\alpha\inv)(g_\alpha g_x)=g_x\inv Qg_x$.
  Therefore
  $$
  \Sigma(x_0,B)\subset g_y\inv Q_yg_y.
  $$
  This proves that condition \eqref{e:step} holds for $y$ as required.

  Now suppose $\alpha$ is a simple, $x$-noncompact imaginary root and let $y=c^\alpha(x)$.
  The simplest proof of the Lemma in this case is representation theoretic.
  Let $(x,\Lambda)$ be any parameter (with given $x$) and let $(y,\Lambda')$ be its Cayley transform.

  We now apply the Hecht--Schmid identity \cite[Corollary 8.4.6]{VoganGreenBook}, which says that
  (on the level of Grothendieck groups)
  \begin{equation}
    \label{e:groth}
  I(y,\Lambda')=s_\alpha I(x,\Lambda) + I'
  \end{equation}
  where $I'$ is a certain standard module (it doesn't matter which) and $s_\alpha$ is the coherent continuation action.
  Then $\O_0\subset AV(I(x,\Lambda))\Rightarrow \O_0\subset AV(s_\alpha(I(x,\Lambda))\Rightarrow \O_0\subset AV(I(y,\Lambda'))$.
(To see the first implication, choose $\Lambda$ as in the proof of \cite[Theorem A.11]{AV1} so that the large composition factors of
  $I(x,\Lambda)$ are large fundamental series. Then  by  \cite[Theorem 7.3.16(b)]{AV1}
each of these large fundamental series occurs in $s_\alpha(I(x,\Lambda))$.)

Now apply \cite[Theorem A.10]{AV1} again, in the reverse direction, to conclude
  $\Sigma(y,B)\subset U$ and $\Sigma(x_0,B)$ is $K$-conjugate to a subset of $g\inv Qg$.
  Therefore \eqref{e:Sigma1} and \eqref{e:Sigma3} hold for~$y$, as required.
\end{proof}

\subsection*{A.2. Real Whittaker models}\label{sec:real_whittaker}

The notion of algebraic Whittaker model is equivalent, in a precise
sense, to the usual notion of Whittaker model.
Here we give the statements of the results in our setting. The proofs are mainly an issue of putting together
the references \cite[Chapter 14]{ABV}, \cite{kostant_whittaker} and \cite{Vogan_GK}; for the discrete series case, see~\cite{AA_Whittaker}.

Let $G(\R)$ be the quasisplit form of $G$.
A  \emph{real Whittaker datum for $G(\mathbb{R})$} is a pair $\w=(B, \psi)$,
where $B$ is a Borel subgroup of $G$ defined over $\mathbb{R}$
and $\psi$ is a non-degenerate character of $N(\mathbb{R})$. 
{\it Non-degenerate} means that the restriction of $\psi$ to  each root space ~$\mathfrak{g}_\alpha(\mathbb{R})$, $\alpha \in S$, is nontrivial.
We will use the term {\it Whittaker datum} for real Whittaker datum,  and always use  {\it algebraic Whittaker datum}  in the algebraic setting of the previous subsection.
Equivalence of (real) Whittaker data is given by conjugacy by $G(\R)$.

We refer to \cite[Section 3]{ABV} for the notion that a 
representation of $G(\R)$ has a Whittaker model of type $\mathfrak
w$.
We say that a representation is \emph{$\mathfrak w$-generic} if it
has a Whittaker model of type $\mathfrak w$, and {\it generic} if
it is $\mathfrak w$-generic for some $\mathfrak w$. The equivalence class of~$\mathfrak{w}$ is uniquely specified by the representation.

Let $\xi=\xi_0,\theta=\theta_{\xi_0}, K=G^\theta$ be as in the previous section, so   $(\g,K)$ is quasisplit.
Let $G(\R)$ be a corresponding real form of $G$. This means: $\sigma$ is an anti-holomorphic involution of $G$, commuting with $\theta$,
and $G(\R)=G^\sigma$. Any two such groups are conjugate by $K$.

Any irreducible or standard module $(\g,K)$-module $\pi$ determines an irreducible or standard
Hilbert space representation $\pi_\R$ of $G(\R)$.
If $\phi$ is a Langlands parameter, we write $\Pi(\phi)$ for the corresponding $L$-packet of $(\g,K)$-modules (this is the ``classical packet'' considered in Section~\ref{sec:disjointness}), and $\Pi_{\R}(\phi)$ for the corresponding finite set of representations of~$G(\R)$.

Let $G_{\mathrm{ad}}=G/Z(G)$ be the adjoint group.
This is the group of inner automorphisms of $G$.
It is defined over $\mathbb{R}$, and $G_{\mathrm{ad}}(\mathbb{R})$
is the group of inner automorphisms of $G$ which are defined over~$\mathbb{R}$.
This contains the subgroup $\mathrm{Ad}(G(\mathbb{R}))=G(\mathbb{R})/Z(G(\mathbb{R}))$
of inner automorphisms of $G(\mathbb{R})$.
We denote by $Q(G(\mathbb{R}))$ the quotient $G_{\mathrm{ad}}(\mathbb{R})/\mathrm{Ad}(G(\mathbb{R}))$.
This is a finite group.

\begin{enonce*}{Lemma A.5}\label{p:dictionary_whittaker}
Suppose $\phi$ is a fundamental series Langlands parameter. The above discussion establishes canonical bijections between:
\begin{enumerate}
\item[\textup{(1)}] The large $(\g,K)$-modules in  $\Pi(\phi)$;
\item[\textup{(2)}] The set of algebraic Whittaker data for $(\g,K)$;
\item[\textup{(3)}] The set of equivalence classes of Whittaker data for $G(\R)$;
\item[\textup{(4)}] The generic  representations in $\Pi_\R(\phi)$.
\end{enumerate}
The bijection \textup{(1)$\leftrightarrow$(2)} attaches to a large $(\g, K)$-module the corresponding algebraic Whittaker datum, and \textup{(4)$\leftrightarrow$(3)} attaches to a large $(\g, K)$-module the corresponding equivalence class of Whittaker data. The bijection \textup{(1)$\leftrightarrow$(4)} is induced by passage from~$\pi$ to~$\pi_{\R}$. The resulting bijection \textup{(2)$\leftrightarrow$(3)} is independent of the choice of $\phi$.
The group $Q(G(\R))$ has canonical simply transitive actions on all these sets, and those commute with the bijections.
\end{enonce*}

Write $\O\mapsto \w(\O)$ for the bijection between algebraic and real Whittaker data. When we move from fundamental series to finite-length $(\g, K)$-modules, we have a more general statement:

\begin{enonce*}{Lemma A.6}
Let~$\pi$ be a finite-length $(\g,K)$-module. Then $\pi$ has an algebraic Whittaker model of type $\O$ if and only if $\pi_\R$ has a Whittaker model of type $\w(\O)$.
This correspondence  commutes with the action of $Q(G(\R))$.
\end{enonce*}

In particular $\pi$ is large if and only if $\pi_\R$ is generic.

Recall we have fixed an algebraic Whittaker datum $\O_0$ for $(\g,K)$ by setting
$\O_0=AV(\pi_0)$ (see~\eqref{d:O0}). Let $\w_0=\w(\O_0)$ be the corresponding Whittaker datum for $G(\R)$.
The analogue of Proposition \href{p:algwhittaker}{A.3} is now clear, and goes as follows. Given an $L$-homomorphism~$\phi$, we have the $(\g, K)$-module $I(\phi,\mathbf{1})$ as in Section~\ref{sec:pin_down_rep}, and a corresponding representation~$I_{\R}(\phi, \mathbf{1})$ of $G(\R)$. Combining Proposition \href{p:algwhittaker}{A.3} and Lemma~\href{p:dictionary_whittaker}{A.5} we obtain:

\begin{enonce*}{Proposition A.7}
\label{p:realwhittaker}
Let~$\phi$ be an $L$-homomorphism, and let~$\mathbf{1}$ be the trivial character of $\widetilde{\mathbb{S}}_\phi$. Then the standard module $I_\R(\phi,\mathbf{1})$ has a Whittaker model of type $\w_0$.
If $\phi$ is tempered, then $\pi_\R(\phi,\mathbf{1})$ has a Whittaker model of type $\w_0$.
\end{enonce*}

\bibliographystyle{smfplain}
\bibliography{jda_aa}
\end{document}